\newcommand{\coloneqq}{\mathrel{\mathop:}=}
\newcommand{\form}[1]{{\langle #1 \rangle }}
\newcommand{\pfister}[1]{{\langle \! \langle #1 \rangle \! \rangle}}
\newcommand{\mydim}[1]{{\mathrm{dim}( #1)}}
\newcommand{\windex}[1]{{\mathfrak{i}_W(#1)}}
\newcommand{\witti}[2]{{\mathfrak{i}_{#1}(#2)}}
\newcommand{\wittj}[2]{{\mathfrak{j}_{#1}(#2)}}
\newcommand{\anispart}[1]{{#1_{\mathrm{an}}}}
\newcommand{\chow}[1]{{\mathrm{Ch}(#1)}}
\newtheorem{theorem}{Theorem}[section]
\newtheorem{lemma}[theorem]{Lemma}
\newtheorem{proposition}[theorem]{Proposition}
\newtheorem{corollary}[theorem]{Corollary}
\newtheorem{conjecture}[theorem]{Conjecture}
\theoremstyle{definition}
\newtheorem{example}[theorem]{Example}
\newtheorem{question}[theorem]{Question}
\theoremstyle{remark}
\newtheorem{remark}[theorem]{Remark}
\newtheorem{remarks}[theorem]{Remarks}
\numberwithin{equation}{section}
\begin{document}

\title[]{Hyperbolicity and near hyperbolicity of quadratic forms over function fields of quadrics}
\author{Stephen Scully}
\address{Department of Mathematical and Statistical Sciences, University of Alberta, Edmonton AB T6G 2G1, Canada}
\email{stephenjscully@gmail.com}

\subjclass[2010]{11E04, 14E05, 14C15.}
\keywords{Quadratic forms, function fields of quadrics, hyperbolicity, near hyperbolicity, Witt kernels.}

\maketitle

\begin{abstract} Let $p$ and $q$ be anisotropic quadratic forms over a field $F$ of characteristic $\neq 2$, let $s$ be the unique non-negative integer such that $2^s < \mydim{p} \leq 2^{s+1}$, and let $k$ denote the dimension of the anisotropic part of $q$ after scalar extension to the function field $F(p)$ of $p$. We conjecture that $\mydim{q}$ must lie within $k$ of a multiple of $2^{s+1}$. This can be viewed as a direct generalization of Hoffmann's Separation theorem. Among other cases, we prove that the conjecture is true if $k<2^{s-1}$. When $k=0$, this shows that any anisotropic form representing an element of the kernel of the natural restriction homomorphism $W(F)\rightarrow W(F(p))$ has dimension divisible by $2^{s+1}$. \end{abstract}

\section{Introduction} \label{secintroduction} Many of the central problems in the algebraic theory of quadratic forms seem to demand an investigation of the behaviour of quadratic forms under scalar extension to function fields of quadrics. This was already apparent from the early beginnings of the subject, with function fields of Pfister quadrics being used in an essential way to prove the fundamental Arason-Pfister Hauptsatz of 1971 (\cite{ArasonPfister}). Arason and Pfister's work led to the foundational papers of Elman-Lam (\cite{ElmanLam}) and Knebusch (\cite{Knebusch1},\cite{Knebusch2}), after which properties of function fields of quadrics have been systematically studied and exploited in many of the major developments in the theory; notable examples include the proof of the Milnor conjectures (\cite{Voevodsky},\cite{OVV}) and essentially all advances on Kaplansky's problem concerning the possible $u$-invariants of fields (\cite{Merkurjev},\cite{Izhboldin2},\cite{Vishik3}).

An important problem in this context is the following: Let $p$ and $q$ be anisotropic quadratic forms of dimension $\geq 2$ over a field $F$ of characteristic $\neq 2$, and let $F(p)$ denote the function field of the projective quadric $\lbrace p=0 \rbrace$. Under what circumstances does $q$ become isotropic over $F(p)$? While this problem seems to be rather intractable in general, the extraction of partial information is already enough for non-trivial applications. As a result, it has been of great interest here to identity general constraints coming from the basic invariants of the forms $p$ and $q$. A key breakthrough in this direction was made in \cite{Hoffmann1}, where Hoffmann determined the constraints coming from the simplest invariants of all, namely, the dimensions of $p$ and $q$. More precisely, Hoffmann's ``Separation theorem'' asserts that if $\mydim{q} \leq 2^s < \mydim{p}$ for some integer $s$, then $q$ remains anisotropic over $F(p)$, and this is optimal, in the sense that isotropy can occur in all other cases. An important generalization of Hoffmann's theorem taking into account one further invariant of $p$ was later given by Karpenko and Merkurjev in \cite{KarpenkoMerkurjev}.

The present article is concerned with the following variant of the above question: To what \emph{extent} can the form $q$ become isotropic over $F(p)$? Formally, the extent to which a quadratic form is isotropic is measured by its \emph{Witt index}, i.e., the maximal dimension of a totally isotropic subspace of the vector space on which it is defined. In analogy with Hoffmann's result, one can ask the following: What constraints do the dimensions of $p$ and $q$ impose on the Witt index $\windex{q_{F(p)}}$ of $q$ after extension to $F(p$)? Here the results of \cite{Hoffmann1} and \cite{KarpenkoMerkurjev} only go so far. In particular, they are essentially vacuous in the case where $\mydim{p} \leq \mydim{q}/2$, whereas the problem itself is non-trivial in all dimensions. A similar criticism applies to the main result of \cite{Scully1}, which gives a certain refinement the result of Karpenko and Merkurjev. The purpose of this paper is to propose the following conjectural answer to the above problem:

\begin{conjecture} \label{conjmain} Suppose that $p$ and $q$ are anisotropic quadratic forms of dimension $\geq 2$ over $F$. Let $s$ be the unique non-negative integer such that $2^s < \mydim{p} \leq 2^{s+1}$, and let $k= \mydim{q} - 2\windex{q_{F(p)}}$ $($i.e., $k$ is the dimension of the anisotropic part of $q_{F(p)})$. Then
$$\mydim{q} = a2^{s+1} + \epsilon $$
for some non-negative integer $a$ and some integer $-k \leq \epsilon \leq k$. \end{conjecture}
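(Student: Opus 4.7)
The overall plan is to study the Chow motive of the projective quadric $Q = V_+(q)$ with $\mathbb{F}_2$-coefficients, and to extract numerical constraints on $\mydim{q}$ modulo $2^{s+1}$ from the hypothesis that $\anispart{(q_{F(p)})}$ has dimension $k$. Writing $n = \mydim{q}$ and $P = V_+(p)$, the base change $Q_{F(p)}$ acquires Witt index $(n-k)/2$, so that many pairs of Tate summands split off from the motive of $Q_{F(p)}$. Via the standard correspondence between such motivic splittings and rational cycles, this decomposition is witnessed by elements of $\chow{P \times Q \times Q}$ defined over $F$. The strategy is to use these cycles to produce useful rational elements of $\chow{Q \times Q}$ over $F$ and then to derive the required divisibility-type constraint on $n$.

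The main technical engine should be the Steenrod square $S^{2^s}$ acting on the mod-$2$ Chow ring, together with the symmetry operations developed by Vishik. Concretely, one first identifies a rational cycle on $Q \times Q$ encoding the hyperbolicity of $Q$ over $F(p)$ up to the anisotropic remainder of dimension $k$. Applying $S^{2^s}$ and exploiting the constraints on rational cycles coming from Karpenko's incompressibility theorem for anisotropic quadrics, one expects to produce a further rational cycle whose very existence would contradict the anisotropy of $q$ unless $\mydim{q}$ is congruent modulo $2^{s+1}$ to some integer $\epsilon$ with $|\epsilon|\leq k$. The use of the exponent $2^s$ is natural here: since $\mydim{p} > 2^s$, the motive of $P$ carries enough ``room'' in codimensions up to $2^s$ to accommodate the transfer of rationality from $F(p)$ back to $F$.

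The principal obstacle is the interference between the hyperbolic Tate summands and the anisotropic remainder $\anispart{(q_{F(p)})}$ inside the motive of $Q_{F(p)}$. A naive application of $S^{2^s}$ to the rational cycle encoding the hyperbolic part yields a cycle that can be perturbed by contributions coming from the anisotropic remainder. When $k$ is small, say $k < 2^{s-1}$, this perturbation lives in codimensions too low to spoil the argument, and I would expect the method to go through essentially without change. For larger $k$ the anisotropic part intrudes into the relevant codimension range, and one would need either iterated Steenrod operations at smaller powers of $2$, finer information about the motive of the quadric associated with $\anispart{(q_{F(p)})}$ and the way it sits inside the motive of $Q_{F(p)}$, or some refinement of the tools developed in \cite{Scully1}. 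Bridging this gap appears to be the essential difficulty of the conjecture in full generality.
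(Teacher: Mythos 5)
Your proposal has the right flavour --- mod-$2$ Steenrod operations acting on Chow groups of quadrics, rationality of cycles, anisotropy of $q$ forcing evenness --- but it misidentifies the main technical tool, underestimates how many operations are needed, and misattributes the source of the $k<2^{s-1}$ restriction. Concretely: the engine of the paper's argument is not Karpenko's incompressibility theorem nor a cycle chase through $\mathrm{Ch}(P\times Q\times Q)$, but rather Vishik's descent theorem (\cite[Cor. 3.5]{Vishik2}, extended to all odd characteristics by Fino), which says that if $\alpha\in\mathrm{Ch}^m(Y_{\overline F})$ is $F(p)$-rational, then $S^j(\alpha)$ is $F$-rational for every $j>m-\left[(\mathrm{dim}(p)-1)/2\right]$. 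The paper applies this directly to the single cycle $l_r\in\mathrm{Ch}(Q_{\overline F})$ with $r=\windex{q_{F(p)}}-1$, which is $F(p)$-rational by the definition of $r$, and obtains $F$-rationality of $S^j(l_r)$ for a whole range of $j$ whose length is governed by $\left[(\mathrm{dim}(p)-1)/2\right]\geq 2^{s-1}$.

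You would also need the arithmetic reformulation that you have not anticipated. Since $S^j(l_r)=\binom{\mathrm{dim}(q)-r-1}{j}\,l_{r-j}$ and $q$ is anisotropic, $F$-rationality of $S^j(l_r)$ forces the binomial coefficient $\binom{(\mathrm{dim}(q)+k)/2}{l}$ to be even for all $l$ in the corresponding range ($l=\mathrm{dim}(q)-r-1-j$). The paper then shows (Lemma~\ref{lemreformulatedconj}) that the conclusion of Conjecture~\ref{conjmain} is \emph{equivalent} to the evenness of $\binom{(\mathrm{dim}(q)+k)/2}{l}$ for every $k<l\leq m$, with $m=2^{s-1}$ when $k<2^{s-1}$ and $m=2^s-2$ otherwise. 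So a single operation such as $S^{2^s}$ is nowhere near enough: you need the full family of evenness constraints, and hence a range of Steenrod operations. Finally, the restriction $k<2^{s-1}$ is not, as you suggest, about perturbation from the anisotropic remainder intruding at low codimension; it is simply that Vishik's theorem only reaches $l\leq\left[(\mathrm{dim}(p)-1)/2\right]$, and when $k<2^{s-1}$ the required range $k<l\leq 2^{s-1}$ fits inside it, whereas for $k\geq 2^{s-1}$ the required range $k<l\leq 2^s-2$ does not (unless $\mathrm{dim}(p)$ is very close to $2^{s+1}$, which gives the additional cases (2) and (3) of the theorem in \S\ref{secproof}).
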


Again, it is not hard to see that this is optimal, to the extent that there can be no further gaps in the possible values of $\mydim{q}$ determined by $\windex{q_{F(p)}}$ and $\mydim{p}$ alone (see Example \ref{exoptimality} below). In particular, the statement include's Hoffmann's theorem:

\begin{example} Suppose that $\mydim{q} \leq 2^s$, so that we have separation of dimensions by a power of $2$. The reader will easily confirm that, in this case, it is only possible to express $\mydim{q}$ in the suggested way if $k=\mydim{q}$, i.e., if $q$ remains anisotropic over $F(p)$. \end{example}

Conjecture \ref{conjmain} is vacuously true if $k \geq 2^s-1$, so we are interested here in the case where $k \leq 2^s - 2$. In other words, beyond the Separation theorem, we are looking at the situation in which $\windex{q_{F(p)}}$ is ``large'' (which explains the title of the article). Loosely speaking, the conjecture asserts that the more isotropic $q$ becomes over $F(p)$, the closer its dimension should be to being divisible by $2^{s+1}$. Our main result is the following:

\begin{theorem} \label{thmmain} Conjecture \ref{conjmain} is true when $k<2^{s-1}$.  \end{theorem}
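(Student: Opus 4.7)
The plan is to reduce Theorem \ref{thmmain} to the hyperbolicity case $k=0$ and to handle that case via motivic methods applied to the Chow motives of the quadrics $X_p$ and $X_q$ with $\mathbb{F}_2$-coefficients (consistent with the paper's $\chow{}$ and $\rchow{}$ notation). In the hyperbolic case, the claim is that $2^{s+1} \mid \mydim{q}$ whenever $q$ is anisotropic over $F$ but $q_{F(p)}$ is hyperbolic. I would attack this by observing that the motive of $X_{q,F(p)}$ decomposes into a direct sum of Tate motives, so by the standard nilpotence/lifting machinery of Vishik and Karpenko, the upper motive $U(X_p)$ must appear as a direct summand of the motive of $X_q$. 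Karpenko's theorem identifying the dimensions of indecomposable motivic summands of anisotropic quadrics with certain binary invariants then forces $\mydim{q}$ to be divisible by $2^{s+1}$.

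For the reduction from general $k$ (with $k < 2^{s-1}$) to $k=0$, I would argue by induction on $k$ applied to a minimal counterexample. The strategy is to find an anisotropic form $q'$ over $F$ whose dimension $\mydim{q'}$ differs from $\mydim{q}$ by an amount that preserves whether the conjecture holds (i.e., something comparable to $2$, or $2k$), and such that the anisotropic part of $q'_{F(p)}$ has strictly smaller dimension than $k$. One natural candidate is obtained by cutting out a $2$-dimensional subform supported on a cycle class in $\chow{X_q \times X_p}$ that represents part of the hyperbolic decomposition of $q_{F(p)}$; a more refined version would use the generic splitting tower of $q$ (replacing $F$ by $F(q)$) and passing to the first anisotropic kernel, controlling how $\mathfrak{i}_W(q_{F(p)})$ and $\mydim{p_{F(q)}}$ interact.

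The main obstacle will be the descent step in this induction, namely producing $q'$ over $F$ while controlling both its dimension and the anisotropic dimension of its scalar extension to $F(p)$. The hypothesis $k < 2^{s-1}$ is almost certainly doing the following heavy lifting: $\dim \phi < 2^{s-1}$ is well below $\dim p / 2$, so the Karpenko--Merkurjev dimension inequality applied to $\phi$ over $F(p)$ (and iteratively along generic splitting towers) gives strong rigidity, and moreover $2k < 2^s < \mydim{p}$ ensures that no parasitic splitting occurs for $p$ upon passing to the function field of $q$ (via Hoffmann's Separation theorem applied in the other direction). A secondary technical point will be treating the base of the induction, where $\mydim{q} \le 2^{s+1}$ and one must verify the conjecture directly from Hoffmann's theorem, the Karpenko--Merkurjev refinement, and the main result of \cite{Scully1}. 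Assembling these pieces should close the argument, but the heart of the proof will surely be a quantitative motivic calculation isolating precisely the cycle in $\rchow{X_p \times X_q}$ that encodes the near-hyperbolicity of $q_{F(p)}$.
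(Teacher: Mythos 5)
The paper's proof of Theorem \ref{thmmain} does not go through a reduction to the hyperbolic case $k=0$ at all, and does not use motivic decompositions. Instead it converts Conjecture~\ref{conjmain} into a parity condition on binomial coefficients (Lemma~\ref{lemreformulatedconj}), observes that these parities control whether the images $S^j(l_r)$ of Steenrod operations applied to the cycle $l_r$, $r=\windex{q_{F(p)}}-1$, on $Q_{\overline F}$ are $F$-rational (Lemma~\ref{lembinomialeven}), and then invokes Vishik's theorem on descent of $F(p)$-rational cycles (\cite[Cor.~3.5]{Vishik2}, cited here as Theorem~\ref{thmmaintool}) to deduce that the relevant coefficients are even. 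Your proposal is a genuinely different route, but it has two substantive gaps.

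First, the reduction from $k<2^{s-1}$ to $k=0$ is not sketched concretely and is in fact the difficult part. To produce an anisotropic $q'$ over $F$ with controlled dimension and strictly smaller anisotropic part over $F(p)$, you would in effect need to show that $\anispart{(q_{F(p)})}$ is defined over $F$. This is exactly what the paper obtains only \emph{conditionally} from Kahn's descent conjecture (Conjecture~\ref{conjKahn} and Proposition~\ref{propdefined}); it is not known in general, and the resulting (conditional) argument only treats $k\le 7$ plus a few extra cases (Theorem~\ref{thmadditional}), not the full range $k<2^{s-1}$. Passing along the generic splitting tower of $q$ does not help here, because one still has to descend the resulting form back to $F$. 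Cutting out a ``2-dimensional subform supported on a cycle class in $\chow{X_q\times X_p}$'' does not produce a quadratic form over $F$, and the $F(p)/F$-descent of such a class is precisely the hard problem.

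Second, the claimed motivic step in the $k=0$ case is not justified and appears to be false as stated. That $M(X_q)_{F(p)}$ splits into Tate motives does not imply that $U(X_p)$ is a direct summand of $M(X_q)$. The relevant criterion (\cite[Thm.~4.15]{Vishik1}, used in the paper in Lemma~\ref{lemmotivicsummand}) requires a two-way equivalence: for all extensions $L/F$, $\windex{p_L}>0$ \emph{if and only if} $\windex{q_L}\ge \mathfrak{i}$. Hyperbolicity of $q_{F(p)}$ gives only the forward direction ($p_L$ isotropic $\Rightarrow$ $q_L$ hyperbolic); the reverse implication, that splitting of $q_L$ forces isotropy of $p_L$, does not follow and is generally false. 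And even when $U(X_p)$ does appear as a summand of $M(X_q)$, extracting divisibility of $\mydim{q}$ by $2^{s+1}$ from ``Karpenko's theorem identifying dimensions of indecomposable motivic summands'' is not automatic: the paper actually obtains $\witti{1}{q}>2^s$ by a separate Cassels--Pfister argument, combines this with Karpenko's theorem on $\witti{1}{q}$ to get divisibility of $\mydim{q}-\witti{1}{q}$, and then \emph{still} needs the $k=0$ case of Theorem~\ref{thmmain} itself (proved by the Steenrod-operation argument) to close the loop in Theorem~\ref{thmdivisibilityofindices}. So the motivic route, as sketched, is circular at exactly that point.
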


In particular, the conjecture is true in the extreme case where $k=0$, which translates as follows:

\begin{corollary} \label{thmWittkernel} Let $p$ and $q$ be anisotropic quadratic forms of dimension $\geq 2$ over $F$, and let $s$ be the unique non-negative integer such that $2^s < \mydim{p} \leq 2^{s+1}$. If $q$ becomes hyperbolic over $F(p)$, then $\mydim{q}$ is divisible by $2^{s+1}$. \end{corollary}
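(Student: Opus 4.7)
The plan is to derive the corollary as the $k=0$ specialization of Theorem \ref{thmmain}. If $q$ becomes hyperbolic over $F(p)$ then $\windex{q_{F(p)}} = \mydim{q}/2$, and so the integer $k = \mydim{q} - 2\windex{q_{F(p)}}$ appearing in the statement of Conjecture \ref{conjmain} is zero. This is precisely the quantity on which the hypothesis $k < 2^{s-1}$ of Theorem \ref{thmmain} is phrased.

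For any $s \geq 1$ we have $0 < 2^{s-1}$, so Theorem \ref{thmmain} applies and yields a decomposition $\mydim{q} = a \cdot 2^{s+1} + \epsilon$ with $a$ a non-negative integer and $-k \leq \epsilon \leq k$. Setting $k = 0$ forces $\epsilon = 0$, so $2^{s+1}$ divides $\mydim{q}$. The remaining case $s = 0$, in which $\mydim{p} = 2$, needs a separate word: but here $F(p)/F$ is simply a quadratic field extension, and the Witt kernel of any such extension is classically known to consist of forms whose anisotropic parts are even-dimensional, so $2 = 2^{s+1}$ divides $\mydim{q}$ in this case as well.

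In summary, the entire mathematical content of the corollary is packaged inside Theorem \ref{thmmain}, and the deduction reduces to an inequality check together with a single line of arithmetic. The genuine obstacle, and the real work of the paper, is the proof of Theorem \ref{thmmain} itself; an independent approach to the hyperbolicity case alone (for instance, attempting to bound $\mydim{q}$ directly via motivic decompositions of the quadric $\lbrace p = 0 \rbrace$) would address only a very special case of that more general analysis, and I would not pursue it in isolation.
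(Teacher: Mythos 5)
Your derivation is correct and takes the same route as the paper, which obtains the corollary by specializing Theorem \ref{thmmain} to $k=0$. The separate treatment of $s=0$ is more care than is needed: formally $k=0<2^{-1}=2^{s-1}$ so the hypothesis of Theorem \ref{thmmain} is still satisfied, and in any case a form that becomes hyperbolic over any extension must already be of even dimension, which gives divisibility by $2^{s+1}=2$ without appeal to the classification of Witt kernels of quadratic extensions.
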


Put another way, Corollary \ref{thmWittkernel} says the following: Let $W(F(p)/F)$ denote the kernel of the natural scalar extension homomorphism $W(F)\rightarrow W(F(p))$ on Witt groups. If $q$ represents an element of $W(F(p)/F)$, then $\mydim{q}$ is divisible by $2^{s+1}$. A few low-dimensional cases aside (see, e.g., \cite{Fitzgerald2}), this seems to have been unknown, even conjecturally.\footnote{Little seems to have been known beyond the fact that $\mydim{q} \geq 2^{s+1}$ in this case, which is an easy consequence of the Cassels-Pfister subform theorem (see \cite[Lem. 4.5]{Knebusch1}).}. In fact, one can go rather further here, and show that all higher Witt indices of $q$, except possibly the last, are divisible by $2^{s+1}$ in this case (Theorem \ref{thmdivisibilityofindices}). This can be viewed as a precise generalization of a well-known theorem of Fitzgerald (\cite{Fitzgerald1}) on the low-dimensional part of $W(F(p)/F)$ (see \ref{subsecfitz} below). Although the proof of the Arason-Pfister Hauptsatz already drew serious attention to the problem of determining Witt kernels of function fields of quadrics, very little progress has been made over the last 40 years, with Fitzgerald's theorem being among the few highlights. Our computations perhaps raise some new questions here. For example, must $q$ be divisible by an $(s+1)$-fold Pfister form in the situation of Corollary \ref{thmWittkernel}? Conjecturally, this question should have a positive answer in the case where $q$ has degree $s+1$, meaning here that $q$ does not represent an element in $I^{s+2}(F)$, the $(s+2)$-nd power of the fundamental ideal of even-dimensional forms in $W(F)$. We provide here some further evidence for this claim, including a proof for the case where $\mydim{p} \leq 16$ (Corollary \ref{corbinarymotive}). While it seems unlikely that such divisibility would hold in general, we are currently unable to provide any counterexample.

Theorem \ref{thmmain} is a consequence of results of Vishik (\cite{Vishik2}) on a certain descent problem for algebraic cycles over function fields of quadrics. The proof is given in \S \ref{secproof} below. Beyond this, we also prove Conjecture \ref{conjmain} in a number of additional cases; namely, the conjecture is true if (i) $2^{s+1}-2 \leq \mydim{p} \leq 2^{s+1}$, (ii) $p$ is a Pfister neighbour, (iii) $k \leq 7$, or (iv) $\mydim{q} \leq 2^{s+2} + 2^{s-1}$. In the case of (iii) and (iv), we again prove a stronger result at the level of the splitting pattern of $q$ (Theorem \ref{thmadditional}). In the course of treating case (iii), we show that (a stronger version of) our conjecture is implied by a long-standing conjecture of Kahn on the unramified Witt group of a quadric (\cite{Kahn1}, see \S \ref{subsecKahn} below).

We expect that the statement of Conjecture \ref{conjmain} is also true in characteristic $2$, even if we allow $p$ and $q$ to be degenerate.\footnote{Of course, if $q$ is degenerate, then one can no longer interpret the integer $k$ as the dimension of the anisotropic part of $q_{F(p)}$.} Unfortunately, the methods of the present article rely on the action of mod-2 Steenrod operations on Chow groups of smooth projective varieties, something which not available in characteristic $2$ at the present time. Nevertheless, using entirely different methods, we can show that our main results (and more) extend to the case where $\mathrm{char}(F) = 2$ and $q$ is \emph{quasilinear} (i.e., diagonalizable). In view of the different nature of the arguments used, this work has been confined to a separate text (\cite{Scully2}). 

We conclude this introduction with an example which shows that Conjecture \ref{conjmain} cannot be improved without stronger hypotheses or permitting the use of additional invariants:

\begin{example} \label{exoptimality} Let $p$ be an anisotropic quadratic form of dimension $\geq 2$ over a field $E$ of characteristic $\neq 2$. Suppose that $p$ is a neighbour of an $(s+1)$-fold Pfister form $\pi$ for some non-negative integer $s$. Choose a non-negative integer $a$, a non-negative integer $k<2^s$, and let $\epsilon = k - 2l$ for some integer $0 \leq l \leq k/2$. Note that we have $\epsilon + l \geq 0$. Let $X = (X_1,\hdots,X_{a + \epsilon + l})$ be a set of $a + \epsilon + l$ algebraically independent variables over $E$, and let $F = E(X)$. Let $\sigma$ be a codimension-$l$ subform of $\pi$, and consider the form
$$ q = \pi_F \otimes \form{X_1,\hdots,X_{a-1}} \perp X_a \sigma_F \perp \form{X_{a+1},\hdots,X_{a+\epsilon +l}}.$$
Since $F/E$ is a purely transcendental extension, $\pi_E$ and $\sigma_E$ are anisotropic. It then follows from \cite[Cor. 19.6]{EKM} that $q$ is anisotropic. Note that $q$ has dimension $a2^{s+1} + \epsilon$. We claim that $\anispart{(q_{F(p)})}$ has dimension $k$. Since $l \leq k/2 < 2^{s-1}$, $\sigma$ is a neighbour of $\pi$. Let $\tau$ be its complementary form of dimension $l$. Since $\pi$ is a Pfister form, we then have
$$ q_{F(p)} = \big(-X_a \tau \perp \form{X_{a+1},\hdots,X_{a+\epsilon+l}}\big)_{F(p)}$$
in $W(F(p))$. By Hoffmann's Separation theorem, $\tau_{E(p)}$ is anisotropic, and since $F(p)/E(p)$ is purely transcendental, \cite[Cor. 19.6]{EKM} then implies that the right-hand side of the above equation is anisotropic. We therefore have that
$$ \anispart{(q_{F(p)})} \simeq \big(-X_a \tau \perp \form{X_{a+1},\hdots,X_{a+\epsilon+l}}\big)_{F(p)}.$$
In particular, the dimension of $\anispart{(q_{F(p)})}$ is equal to
$$ \mydim{\tau} + \epsilon + l = 2l + \epsilon = k, $$
as claimed. Since the integers $k$ and $\epsilon$ in the statement of Conjecture \ref{conjmain} must have the same parity, this shows the optimality of the assertion. \end{example}

Before proceeding, we make the following conventions:\\

\noindent {\bf Conventions.} All fields considered in this paper have characteristic $\neq 2$, and all quadratic forms are finite-dimensional and non-degenerate. By a \emph{variety}, we mean an integral separated scheme of finite-type over a field. The field of $2$ elements is denoted by $\mathbb{F}_2$.

\section{Some preliminary facts and terminology} \label{secpreliminary} For the remainder of this text, $F$ will denote an arbitrary field of characteristic $\neq 2$. We assume basic familiarity with the algebraic theory of quadratic forms, and the reader is referred to \cite{EKM} for all undefined terminology and notation.

\subsection{The Knebusch splitting tower of a quadratic form} \label{subsecknebusch} Let $\varphi$ be a quadratic form over $F$. Recall the following construction of Knebusch (\cite{Knebusch1}): Set $F_0 = F$, $\varphi_0=\anispart{\varphi}$ (the anisotropic the kernel of $\varphi$), and recursively define
$$ F_r = F_{r-1}(\varphi_{r-1}) \hspace{1cm} \text{and} \hspace{1cm} \varphi_r = \anispart{(\varphi_{F_r})}, $$
with the understanding that the process stops at the first non-negative integer $h(\varphi)$ for which $\varphi_{h(\varphi)}$ is split (i.e., of dimension at most 1). The integer $h(\varphi)$ is called the \emph{height} of $\varphi$. By the \emph{splitting pattern} of $\varphi$, we will mean the decreasing sequence of integers comprised of the dimensions of the $\varphi_r$. For each $0 \leq r \leq h(\varphi)$, we set $\mathfrak{j}_r(\varphi)$ to be the Witt index of $\varphi$ after extension to the field $F_r$. If $\varphi$ is not split and $r \geq 1$, then the integer $\wittj{r}{\varphi} - \wittj{r-1}{\varphi}$ is called the \emph{r-th higher Witt index} of $\varphi$, and is denoted $\witti{r}{\varphi}$. By \cite[Thm. 5.8]{Knebusch1}, the even-dimensional anisotropic forms of height 1 are precisely those which are similar to a Pfister form, i.e., to a form of the shape $\pfister{a_1,\hdots,a_m} : = \bigotimes_{i=1}^m \form{1,-a_i}$. In view of the inductive nature of Knebusch's construction, it follows that if $\varphi$ is non-split of even dimension, then $\varphi_{h(\varphi)-1}$ is similar to an $n$-fold Pfister form for some positive integer $n$. The latter integer is called the \emph{degree} of $\varphi$, denoted $\mathrm{deg}(\varphi)$. If $\mathrm{dim}(\varphi)$ is odd (resp. if $\varphi$ is hyperbolic), then we set $\mathrm{deg}(\varphi) =0$ (resp. $\mathrm{deg}(\varphi) = \infty$). By a deep result due to Orlov, Vishik and Voevodsky (\cite[Thm. 4.3]{OVV}), $\mathrm{deg}(\varphi)$ then coincides with the supremum of the set of all integers $d$ for which $\varphi$ represents an element in the $d$-th power $I^d(F)$ of the fundamental ideal ideal in the Witt ring $W(F)$.

\subsection{Stable birational equivalence of quadratic forms} \label{subsecstablebirationality} Let $\psi$ and $\varphi$ be a pair of anisotropic quadratic forms over $F$ of dimension $\geq 2$. We say that $\varphi$ and $\psi$ are \emph{stably birationally equivalent} if both $\varphi_{F(\psi)}$ and $\psi_{F(\varphi)}$ are isotropic. For example, an anisotropic Pfister neighbour is stably birationally equivalent to its ambient Pfister form (\cite[Rem. 23.11]{EKM}). More generally, if $\psi$ is similar to a subform of $\varphi$ having codimension less than $\witti{1}{\varphi}$, then $\varphi$ and $\psi$ are stably birationally equivalent (see \cite[Lem. 74.1]{EKM}). Following the previous example, we say in this case that $\psi$ is a \emph{neighbour} of $\varphi$.

\subsection{Algebraic cycles on quadrics} \label{subsecChowsplit} Given a variety $X$ over a field $K$, we will write $\chow{X}$ for its total Chow group modulo 2. The group $\chow{X}$ has the natural structure of an $\mathbb{F}_2$-vector space. If $L$ is a field extension of $K$, then an element of $\chow{X_L}$ is said to be \emph{$K$-rational} if it lies in the image of the natural restriction homomorphism $\chow{X} \rightarrow \chow{X_L}$. Suppose now that $X$ is a smooth projective quadric of dimension $n \geq 1$ defined by the vanishing of a quadratic form $\varphi$ over our fixed field $F$, and let $\overline{F}$ be an algebraic closure of $F$. By \cite[Prop. 68.1]{EKM}, an $\mathbb{F}_2$-basis of $\chow{X_{\overline{F}}}$ is given by the set $\lbrace l_i, h^i\;|\; 0 \leq i \leq [n/2] \rbrace$, where $l_i$ (resp. $h^i$) is the class of an $i$-dimensional projective linear subspace (resp. a codimension-$i$ hyperplane section) of $X_{\overline{F}}$. The following lemma is a basic consequence of the well-know theorem of Springer asserting that odd-degree field extensions do not affect the Witt index of a quadratic form:

\begin{lemma}[{see \cite[Cor. 72.6]{EKM}}] \label{lemrationality} Let $\varphi$ be a quadratic form of dimension $\geq 2$ over $F$ with associated $($smooth$)$ projective quadric $X$, and let $0 \leq i \leq [n/2]$. If the element $l_i \in \chow{X_{\overline{F}}}$ is $F$-rational, then $\windex{\varphi} > i$. \end{lemma}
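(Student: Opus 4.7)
The plan is to proceed by induction on $i$, with Springer's theorem on odd-degree extensions as the engine at each level.

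For the base case $i=0$, suppose $l_0 \in \chow{X_{\overline{F}}}$ is $F$-rational, and lift it to some $\alpha \in \chow{X}$. The class $l_0$ is that of a rational point, so has degree $1$, and hence $\alpha$ is a zero-cycle of odd total degree mod $2$. This forces the existence of a closed point $P \in X$ with $[F(P):F]$ odd, so that $\varphi_{F(P)}$ is isotropic. Springer's theorem then yields $\windex{\varphi} \geq 1$, as required at $i=0$.

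For the inductive step, assume the statement at $i-1$ and suppose $l_i$ is $F$-rational for some $i \geq 1$. Since $h^i \in \chow{X}$ is obviously $F$-rational and $l_i \cdot h^i = l_0$ in $\chow{X_{\overline{F}}}$, the base case already gives that $\varphi$ is isotropic, so we may write $\varphi \simeq \mathbb{H} \perp \varphi'$. The constraint $i \leq [n/2]$ forces $\mydim{\varphi'} = \mydim{\varphi} - 2 \geq 2$, so the associated projective quadric $X'$ is well-defined and of dimension $n-2$. The crucial step is to transfer the $F$-rationality of $l_i \in \chow{X_{\overline{F}}}$ to the $F$-rationality of $l_{i-1} \in \chow{X'_{\overline{F}}}$; once this is in hand, the inductive hypothesis applied to $\varphi'$ yields $\windex{\varphi'} > i-1$, whence $\windex{\varphi} = 1 + \windex{\varphi'} > i$.

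To execute this transfer, I would fix an $F$-rational point $P \in X$ supplied by the hyperbolic summand and exploit the linear projection from $P$. After blowing up $P$, one obtains an $F$-defined morphism to $X'$, and the associated cycle-theoretic correspondence in $\chow{X \times X'}$ can be checked to send $l_i$ on $X_{\overline{F}}$ to $l_{i-1}$ on $X'_{\overline{F}}$, modulo linear combinations of $h$-powers that are already manifestly $F$-rational. This is essentially the content of the Rost-style motivic decomposition of an isotropic quadric, which matches the linear cycle classes on $X$ with those on $X'$ after a unit shift in dimension. The main technical obstacle is the explicit verification of how this correspondence acts on the basis $\lbrace l_j, h^j \rbrace$, i.e., checking that the ``sub-quadric'' piece of the decomposition genuinely carries $l_i$ to $l_{i-1}$; this is precisely what is carried out in \cite[Ch.~VII]{EKM} and underlies the cited Corollary $72.6$.
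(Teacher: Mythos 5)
Your overall strategy—induction on $i$, with Springer's theorem supplying the base case and the decomposition of an isotropic quadric supplying the inductive transfer—is correct and is essentially the argument behind \cite[Cor.~72.6]{EKM}; the paper itself gives no proof, only the citation and the remark that the statement rests on Springer's theorem. The base case and the reduction of the inductive step to the transfer claim are both fine. The one genuine imprecision is your description of the transfer correspondence: blowing up the $F$-rational point $P$ in $X$ resolves the linear projection from $P$ into a morphism $\mathrm{Bl}_P X \to \mathbb{P}^n$, not into a morphism to the subquadric $X'$, which has dimension $n-2$. The construction that actually works first restricts to the tangent hyperplane section $Y = T_P X \cap X$, which is a cone over $X'$ with vertex $P$ of dimension $n-1$; blowing up $P$ in $Y$ yields a $\mathbb{P}^1$-bundle $\widetilde{Y}\to X'$, and the $(n-1)$-dimensional correspondence $\Gamma\subset X\times X'$ obtained from $\widetilde{Y}\to Y\hookrightarrow X$ paired with $\widetilde{Y}\to X'$ is the one that carries $l_i$ to $l_{i-1}$: representing $l_i$ by a linear $\mathbb{P}^i\subset X_{\overline{F}}$ avoiding $P$, it meets $Y$ properly in a $\mathbb{P}^{i-1}$ that projects isomorphically into $X'$. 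Since $P$ is an $F$-point, $\Gamma$ is defined over $F$, so $\Gamma_*$ preserves $F$-rationality and the transfer goes through as you intended; with this correction the induction closes.
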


\subsection{The motivic decomposition type and upper motive of a quadric} \label{subsecmotives} Given a field $K$, we will write $\textsf{\textsl{Chow}}(K)$ for the additive category of Grothendieck-Chow motives over $K$ with \emph{integral} coefficients (as defined in \cite[Ch. XII]{EKM}, for example). If $X$ is a smooth projective variety over $K$, then we will write $M(X)$ for its motive considered as an object of $\textsf{\textsl{Chow}}(K)$. In the special case where $X = \mathrm{Spec}(K)$, we simply write $\mathbb{Z}$ instead of $M(X)$ (the dependency on the base field is suppressed from the notation). Given an integer $i$ and an object $M$ of $\textsf{\textsl{Chow}}(K)$, we will write $M\lbrace i \rbrace$ for the $i$-th Tate twist of $M$. In particular, $\mathbb{Z} \lbrace i \rbrace$ will denote the Tate motive with shift $i$ in $\textsf{\textsl{Chow}}(K)$. If $L$ is a field extension of $K$, we write $M_L$ to denote the image of an object $M$ in $\textsf{\textsl{Chow}}(K)$ under the natural scalar extension functor $\textsf{\textsl{Chow}}(K) \rightarrow \textsf{\textsl{Chow}}(L)$. 

Suppose now that $X$ is a smooth projective quadric of dimension $n \geq 1$ defined by the vanishing of a quadratic form $\varphi$ over our fixed field $F$, and let $\Lambda(n) = \lbrace i \;|\;0 \leq i \leq [n/2] \rbrace \coprod \lbrace n-i \;|\;0 \leq i \leq [n/2]\rbrace$. By a result of Vishik (see \cite[\S\S 3,4]{Vishik1}), any direct summand of $M(X)$ decomposes (in an essentially unique way) into a finite direct sum of indecomposable objects in $\textsf{\textsl{Chow}}(F)$. If $N$ is a non-zero direct summand of $M(X)$, then there exists a unique non-empty subset $\Lambda(N) \subseteq \Lambda(n)$ such that $N_{\overline{F}} \cong \bigoplus_{\lambda \in \Lambda(N)} \mathbb{Z}\lbrace \lambda \rbrace$ (\emph{loc. cit.}). In particular, we have $\Lambda\big(M(X)\big) = \Lambda(n)$, i.e., $M(X_{\overline{F}}) \simeq \bigoplus_{\lambda \in \Lambda(n)} \mathbb{Z}\lbrace \lambda \rbrace$. Now, if $N_1$ and $N_2$ are distinct \emph{indecomposable} direct summands of $M(X)$, then $\Lambda(N_1)$ and $\Lambda(N_2)$ are easily seen to be disjoint (\cite[Lem. 4.2]{Vishik1}), and so the complete motivic decomposition of $X$ determines in this way a partition of the set $\Lambda(n)$. This partition is an important discrete invariant of $\varphi$ known as its \emph{motivic decomposition type}. For the reader's convenience, we state here the most significant recent advance in the study of this invariant due to Vishik (\cite{Vishik4}). Recall first that the \emph{upper motive} of $X$ is defined as the unique indecomposable direct summand $U(X)$ of $M(X)$ such that $0 \in \Lambda(U(X))$, i.e., such that $\mathbb{Z}$ is isomorphic to a direct summand of $U(X)_{\overline{F}}$. We then have:

\begin{theorem}[{see \cite[Thm. 2.1]{Vishik4}}] \label{thmexcellentconnections} Let $\varphi$ be an anisotropic quadratic form of dimension $\geq 2$ over $F$ with associated $($smooth$)$ projective quadric $X$. Write
$$ \mathrm{dim}(\varphi) - \witti{1}{\varphi} = 2^{r_1} - 2^{r_2} + \cdots + (-1)^{t-1}2^{r_t} $$
for uniquely determined integers $r_1>r_2> \cdots >r_{t-1}>r_t + 1\geq 1$, and, for each $1 \leq c \leq t$, set
$$ D_c = \sum_{i=1}^{c-1} (-1)^{i-1}2^{r_i - 1} + \epsilon(c)\sum_{j=l}^t(-1)^{j-1}2^{r_j},$$
where $\epsilon(c) = 1$ if $c$ is even and $\epsilon(c) = 0$ if $c$ is odd. Then, for any $1 \leq c \leq t$, we have $D_c \in \Lambda(U(X))$, i.e., the Tate motive $\mathbb{Z}\lbrace D_c \rbrace$ is isomorphic to a direct summand of $U(X)_{\overline{F}}$.\end{theorem}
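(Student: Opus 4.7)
The plan is to proceed by induction on $c$. The base case $c = 1$ amounts to the assertion $D_1 \in \Lambda(U(X))$; a direct computation from the given formula (using $\epsilon(1) = 0$ and the empty sum for $\sum_{i=1}^{0}$) shows that $D_1 = 0$, and $0 \in \Lambda(U(X))$ holds by the very definition of the upper motive, since $U(X)$ is the unique indecomposable summand of $M(X)$ containing $\mathbb{Z}$ over the algebraic closure.

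For the inductive step, suppose that $D_c \in \Lambda(U(X))$. The key input should be Vishik's correspondence between motivic summands of $M(X)$ and modulo-$2$ rational cycles on $X \times X$: the presence of $\mathbb{Z}\lbrace D_c \rbrace$ as a Tate summand of $U(X)_{\overline{F}}$ encodes a specific $F$-rational correspondence in $\chow{(X\times X)_{\overline{F}}}$, and the desired assertion $D_{c+1} \in \Lambda(U(X))$ will follow once I exhibit a second rational correspondence realising the new Tate twist. To produce it, I would apply a mod-$2$ Steenrod operation of degree determined by the binary datum $2^{r_c}$ to the correspondence already in hand. Since Steenrod operations preserve $F$-rationality, the resulting cycle is again $F$-rational, and one can in principle compute it explicitly using Brosnan's formulas for the action of Steenrod operations on the standard basis $\lbrace l_i, h^i \rbrace$ of $\chow{X_{\overline{F}}}$ and thereby identify a non-trivial $l_{D_{c+1}}$ contribution.

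The main obstacle is to ensure that the new rational cycle corresponds to a Tate summand of $U(X)$ itself, rather than to some other indecomposable summand of $M(X)$. To force this, one has to pre- and post-compose with the idempotent projector onto $U(X)$ and then invoke Rost nilpotence to promote the resulting cycle-theoretic information into a genuine motivic splitting. A secondary difficulty is that the mod-$2$ Steenrod operations can be insufficiently sensitive in certain borderline codimensions, where $2$-torsion information is relevant; here one expects to need Vishik's integral symmetric operations, which refine the Steenrod operations at the level of the integral Chow ring.

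The combinatorial constraint $r_1 > r_2 > \cdots > r_{t-1} > r_t + 1 \geq 1$ in the hypothesis appears to be precisely what is needed to guarantee that the Steenrod (or symmetric) operations chosen at each step of the induction land in codimensions compatible with the basis expansion, so that distinct $D_c$'s do not collide and the inductive step closes. I would expect that the hardest book-keeping is in verifying this compatibility together with the identification of $D_{c+1}$ in the output of the operation, since the binary bookkeeping in the definition of $D_c$ is delicate.
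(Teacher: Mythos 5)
This statement is not proved in the paper: it is quoted verbatim from Vishik's \emph{Excellent connections in the motives of quadrics} (the citation \cite[Thm. 2.1]{Vishik4}), so there is no internal argument for you to reconstruct. Measured against Vishik's actual proof, your sketch does name the right ingredients in broad outline --- rationality of cycles on $X\times X$, operations of Steenrod type, projectors, Rost nilpotence --- and the base case computation $D_1 = 0 \in \Lambda(U(X))$ is correct.

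The inductive step, however, has two genuine gaps. First, what you file as a ``secondary difficulty'' is in fact the central obstruction: mod-$2$ Steenrod operations alone are provably too coarse here, because the relevant binomial coefficients vanish modulo $2$ in exactly the borderline codimensions you would need to detect, so the ``explicit computation using Brosnan's formulas'' you invoke would return zero and give no information. Vishik's theorem is a theorem precisely because he works with his \emph{integral} symmetric operations (constructed in algebraic cobordism as ``halves'' of Steenrod squares), and the argument hinges on nontrivial $2$-divisibility estimates for the cycles they produce; these are not a refinement one can gesture at, they are the content. Second, the jump from $D_c$ to $D_{c+1}$ is not realized by a single operation of degree governed by $2^{r_c}$: a direct computation from the definition shows $D_{c+1}-D_c = 2^{r_c-1} - 2^{r_{c+1}} + 2^{r_{c+2}} - \cdots$, an alternating sum rather than a power of $2$, and you supply no bookkeeping to show that the operation's output --- even after composing with the projector onto $U(X)$ --- lands on $l_{D_{c+1}}$ rather than being annihilated or landing on a different summand. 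As written, the steps ``one can in principle compute it explicitly,'' ``one has to pre- and post-compose,'' and ``one expects to need'' defer exactly the technical work that distinguishes this from a triviality, so the proposal is a plan, not a proof.
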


\section{Hyperbolicity of quadratic forms over function fields of quadrics}

The proof of Theorem \ref{thmmain} will be given in the next section. Taking this for granted, we give here the basic applications to the study of Witt kernels of function fields of quadrics. We start by recalling the statement of the Cassels-Pfister subform theorem:

\begin{proposition}[{see \cite[Lem. 4.5]{Knebusch1}}] \label{propCP} Let $p$ and $q$ be anisotropic quadratic forms of dimension $\geq 2$ over $F$ such that $q_{F(p)}$ is hyperbolic. Then, for any non-zero elements $a \in D(p)$ and $b \in D(q)$, there exists a quadratic form $r$ over $F$ such that $q \simeq r \perp abp$. In particular, $\mydim{p} \leq \mydim{q}$. \end{proposition}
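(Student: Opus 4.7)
The first step is a scaling reduction to the normalized case $a = b = 1$. Replacing $p$ by $ap$ and $q$ by $bq$ preserves the hypotheses: both scaled forms remain anisotropic, $F(ap) = F(p)$ since the projective quadrics coincide, and $(bq)_{F(p)}$ is hyperbolic if and only if $q_{F(p)}$ is. Moreover, $ap$ represents $a^2 \in F^{*2}$ --- hence represents $1$ --- and likewise for $bq$. If one establishes in this normalized setting that $p$ is a subform of $q$, then unscaling (using $\lambda p \simeq \mu p$ whenever $\lambda/\mu \in F^{*2}$) recovers the desired conclusion $q \simeq r \perp abp$.

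The core task is then: if $p, q$ are anisotropic over $F$ of dimension $\geq 2$, both represent $1$, and $q_{F(p)}$ is hyperbolic, prove that $p$ is a subform of $q$. The classical tool here is the Cassels-Pfister subform theorem, which asserts that for $p = \form{1, d_2, \ldots, d_m}$ anisotropic, $p$ is a subform of $q$ if and only if $q$ represents the generic polynomial value $p(t_1, \ldots, t_m) = t_1^2 + d_2 t_2^2 + \cdots + d_m t_m^2$ over the purely transcendental extension $F(t_1, \ldots, t_m)$. This equivalence itself relies on the Cassels-Pfister representation theorem for polynomials --- a polynomial represented by a quadratic form over $F(t)$ is represented over $F[t]$ --- applied variable by variable, followed by a degree count forcing the representing entries to be linear in the $t_i$ and hence yielding a linear isometric embedding of $p$ into $q$.

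It remains to verify the representation hypothesis --- that $q$ represents $p(t_1, \ldots, t_m)$ over $F(t_1, \ldots, t_m)$ --- and this is where the hyperbolicity of $q_{F(p)}$ enters. Here the standard move is to present $F(p)$ as a quadratic extension $K(\sqrt{\alpha})$ of the purely transcendental field $K = F(u_1, \ldots, u_{m-2})$, where $\alpha = -(1 + d_2 u_1^2 + \cdots + d_{m-1} u_{m-2}^2)/d_m$ arises from the affine equation of $\{p = 0\}$. Hyperbolicity of $q_{K(\sqrt{\alpha})}$ places $q_K$ in the Witt kernel $\pfister{\alpha} W(K)$; combining this with the fact that $q_K$ still represents $1$, and applying Cassels-Pfister once more in an additional variable to carry the remaining coordinate polynomially, one extracts the required representation of $p(t_1, \ldots, t_m)$ by $q$ over $F(t_1, \ldots, t_m)$. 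The main technical obstacle is this final synthesis: turning the abstract Witt-ring divisibility by $\pfister{\alpha}$, together with the normalization $1 \in D(q)$, into a concrete polynomial representation of the full generic value of $p$. The dimension inequality $\mydim{p} \leq \mydim{q}$ is then immediate from $q \simeq r \perp abp$.
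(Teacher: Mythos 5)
The paper does not prove this proposition---it is quoted directly from Knebusch (Lemma 4.5 of \cite{Knebusch1})---so there is no in-paper argument to compare with. Your outline is the standard proof and matches the cited source: scale so that $1 \in D(p) \cap D(q)$; write $p \simeq \form{1,a_2,\hdots,a_m}$ and present $F(p)$ as $K(\sqrt{\alpha})$ with $K = F(X_2,\hdots,X_{m-1})$ purely transcendental and $\alpha = -(a_m + a_2X_2^2 + \cdots + a_{m-1}X_{m-1}^2)$; use the fact that the Witt kernel of $K(\sqrt{\alpha})/K$ is $\pfister{\alpha}W(K)$; and invoke the Cassels--Pfister subform criterion. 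The scaling reduction and the statement of the criterion are both correct.

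The step you yourself flag as ``the main technical obstacle'' is, however, a genuine gap, and ``applying Cassels--Pfister once more in an additional variable'' is not quite the right move there. What converts $q_K \in \pfister{\alpha}W(K)$ together with $1 \in D(q)$ into a representation of the generic value is \emph{multiplicativity} of Pfister forms, not another application of Cassels--Pfister. Since $q_K$ is anisotropic (purely transcendental base change) and lies in $\pfister{\alpha}W(K)$, one has $q_K \simeq \pfister{\alpha}\otimes\tau$ for some form $\tau$ over $K$; roundness of $\pfister{\alpha}$ then gives $D(\pfister{\alpha}) \subseteq G(q_K)$. Now embed $K \hookrightarrow F(t_1,\hdots,t_m)$ via $X_i \mapsto t_i/t_m$, so that $\alpha \mapsto -p'(t_2,\hdots,t_m)/t_m^2$ where $p \simeq \form{1}\perp p'$. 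Then $t_1^2 - \alpha t_m^2 = p(t_1,\hdots,t_m)$ lies in $D\bigl(\pfister{\alpha}_{F(t_1,\hdots,t_m)}\bigr) \subseteq G\bigl(q_{F(t_1,\hdots,t_m)}\bigr)$, and multiplying this similarity factor by the represented value $1$ shows that $q$ represents $p(t_1,\hdots,t_m)$ over $F(t_1,\hdots,t_m)$. Only at this final stage does the Cassels--Pfister representation theorem enter, applied variable by variable together with the degree count, to convert the rational representation of $p(t)$ into the linear isometric embedding $p \subset q$ that you want.
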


Taking degrees into account, the dimension inequality can be refined as follows:

\begin{corollary} \label{corCP} Let $p$ and $q$ be anisotropic quadratic forms over $F$ such that $q_{F(p)}$ is hyperbolic, and let $n = \mathrm{deg}(q)$. Then $\mathrm{dim}(p) \leq 2^n$. \end{corollary}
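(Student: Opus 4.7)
The plan is to refine Proposition \ref{propCP} by applying it at the top of the Knebusch splitting tower of $q$, where the anisotropic kernel becomes similar to a Pfister form of the expected degree. Since $q_{F(p)}$ is hyperbolic, $\mydim{q}$ must be even and $n = \mathrm{deg}(q)$ is a finite positive integer. Writing $F = F_0, F_1, \ldots, F_h$ for the Knebusch tower of $q$ with $h = h(q)$, the penultimate anisotropic kernel $q_{h-1}$ is similar to an $n$-fold Pfister form $\pi$ over $F_{h-1}$, and in particular $\mydim{\pi} = 2^n$. Hyperbolicity is preserved under scalar extension, so $q_{F_{h-1}(p)}$ is hyperbolic; comparing Witt classes over $F_{h-1}$ then forces $\pi$ itself to be hyperbolic over $F_{h-1}(p)$. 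I would then invoke Proposition \ref{propCP} over $F_{h-1}$ on the pair $(p_{F_{h-1}}, \pi)$ to conclude $\mydim{p} \leq \mydim{\pi} = 2^n$.

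The main obstacle will be verifying the one hypothesis of Proposition \ref{propCP} that is not automatic in this setup, namely that $p$ remains anisotropic after scalar extension to $F_{h-1}$. My approach would be by contradiction: if $p_{F_{h-1}}$ were isotropic, then the smooth projective quadric $\lbrace p = 0 \rbrace$ would acquire an $F_{h-1}$-rational point and hence be rational over $F_{h-1}$, so that $F_{h-1}(p)$ would be a purely transcendental extension of $F_{h-1}$. Since anisotropic forms remain anisotropic over purely transcendental extensions, $\pi$ would then be anisotropic over $F_{h-1}(p)$, contradicting the hyperbolicity established above. With $p_{F_{h-1}}$ now known to be anisotropic (and of dimension $\geq 2$, which we may assume), the application of Proposition \ref{propCP} over $F_{h-1}$ is immediate and delivers the bound $\mydim{p} \leq 2^n$.
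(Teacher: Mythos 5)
Your proof is correct and takes essentially the same route as the paper: both apply Proposition \ref{propCP} at the penultimate stage $F_{h-1}$ of the Knebusch tower of $q$, using the facts that $q_{h-1}$ is similar to an $n$-fold Pfister form and becomes hyperbolic over $F_{h-1}(p)$. The paper packages the two technical points you argue for (that $p$ stays anisotropic over $F_{h-1}$, and that $q_{h-1}$ splits over $F_{h-1}(p)$) into Lemma \ref{lemhypextensions}; your inline purely-transcendental-extension argument for the anisotropy of $p_{F_{h-1}}$ is exactly the proof of that lemma.
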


Indeed, this follows from the proposition by taking $L$ to be the penultimate entry of the Knebusch splitting tower of $q$ in the following lemma:

\begin{lemma} \label{lemhypextensions} Let $q$ and $p$ be anisotropic quadratic forms of dimension $\geq 2$ over $F$ such that $q_{F(p)}$ is hyperbolic, and let $L$ be a field extension of $F$. If $q_L$ is not hyperbolic, then $p_L$ is anisotropic, and $\anispart{(q_L)}$ becomes hyperbolic over $L(p)$.
\begin{proof} It is clear that $\anispart{(q_L)}$ becomes hyperbolic over $L(p)$. If $p_L$ is isotropic, then $L(p)$ is a purely transcendental extension of $L$ (\cite[Prop. 22.9]{EKM}) and so $q_L$ must already be hyperbolic. \end{proof} \end{lemma}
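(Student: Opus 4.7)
The plan is to separate the two claims and handle them in order, noting that the first is essentially formal and the second is a contrapositive argument built on a standard fact about function fields of isotropic quadrics.

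For the first assertion, I would argue as follows. Since $q_{F(p)}$ is hyperbolic, scalar extension to $L(p)$ shows that $q_{L(p)}$ is hyperbolic as well. But $q_L$ and $\anispart{(q_L)}$ represent the same Witt class, so $\anispart{(q_L)}_{L(p)}$ differs from $q_{L(p)}$ by a hyperbolic form and is therefore itself hyperbolic. Note that this step makes no use of the hypothesis that $q_L$ is non-hyperbolic; it simply records that hyperbolicity of a class over $F(p)$ is inherited by any overfield of the form $L(p)$.

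For the second assertion, the natural approach is to prove the contrapositive: if $p_L$ is isotropic, then $q_L$ is already hyperbolic. The key input is the standard fact (recorded in the excerpt as \cite[Prop. 22.9]{EKM}) that the function field of an isotropic quadric of dimension $\geq 1$ is a purely transcendental extension of the ground field. Assuming $p_L$ isotropic, this gives that $L(p)/L$ is purely transcendental. Since anisotropy of quadratic forms is preserved under purely transcendental extensions, one can combine this with the first part to conclude that $\anispart{(q_L)}$ must already be hyperbolic over $L$; as $\anispart{(q_L)}$ is by construction anisotropic, this forces it to be the zero form, so that $q_L$ is hyperbolic.

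There is essentially no obstacle here beyond correctly invoking the purely transcendental extension result, since both steps are direct consequences of standard facts. The only subtle point to flag is that $p_L$ (and hence $L(p)$) only makes sense once $p_L$ is of dimension $\geq 2$, which is guaranteed by the hypothesis $\mydim{p}\geq 2$ and the fact that scalar extension does not decrease dimension.
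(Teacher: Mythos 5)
Your proof is correct and follows the same route as the paper's: the first claim is the formal observation that hyperbolicity passes up to $L(p)$, and the second is the contrapositive via the fact that $L(p)/L$ is purely transcendental when $p_L$ is isotropic (hence Witt index, and in particular hyperbolicity, is unaffected). You have simply spelled out the intermediate steps that the paper leaves implicit.
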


\subsection{Main result} \label{subsecmainhyp} The statement of Corollary \ref{corCP} can be reformulated as follows: If $q_{F(p)}$ is hyperbolic, then the last higher Witt index of $q$ is equal to $2^m$ for some $m \geq s$. The new insight here is that the remaining higher Witt indices are all divisible by $2^{s+1}$:

\begin{theorem} \label{thmdivisibilityofindices} Let $p$ and $q$ be anisotropic quadratic forms of dimension $\geq 2$ over $F$, and let $s$ be the unique non-negative integer such that $2^s < \mydim{p} \leq 2^{s+1}$. If $q_{F(p)}$ is hyperbolic, then $\witti{r}{q}$ is divisible by $2^{s+1}$ for every $1 \leq r < h(q)$. 

\begin{proof} Since the statement is insensitive to multiplying $p$ and $q$ by non-zero scalars, we can assume that both forms represent $1$. We argue by induction on $h(q)$. If $h(q)=1$, there is nothing to prove, so assume otherwise. Then, by Lemma \ref{lemhypextensions}, $p_{F(q)}$ is anisotropic and $q_1$ becomes hyperbolic over $F(q)(p)$. The induction hypothesis therefore implies that $\witti{r}{q} = \witti{r-1}{q_1}$ is divisible by $2^{s+1}$ for all $1<r<h(q)$. It remains to show that $\witti{1}{q}$ is also divisible by $2^{s+1}$. We begin by  showing that $\witti{1}{q} > 2^s$.  Suppose, for the sake of contradiction, that this is not the case. Since both $p$ and $q$ represent $1$, it follows from the Cassels-Pfister subform theorem (Proposition \ref{propCP} above) that there exists a quadratic form $r$ over $F$ such that
\begin{equation} \label{eq3.1} q \simeq r \perp p. \end{equation}
We recall the following observation due to  Gentile and Shapiro:

\begin{lemma}[{\cite[Lem. 17]{GentileShapiro}}] \label{lemGS} Let $L$ be a field extension of $F$. In the above situation, either
\begin{enumerate} \item $\anispart{(q_L)} \simeq \anispart{(r_L)} \perp \anispart{(p_L)}$, or
\item $\anispart{(r_L)} \simeq \anispart{(q_L)}  \perp - \anispart{(p_L)}$. 
\end{enumerate}
\begin{proof} Both (1) and (2) are true if we replace the isometry relation with Witt equivalence. In particular, (2) holds if either $q_L$ is hyperbolic or $\anispart{(q_L)} \perp - \anispart{(p_L)}$ is anisotropic. If not, then $p_L$ is anisotropic and $\anispart{(q_L)}$ becomes hyperbolic over $L(p)$ (Lemma \ref{lemhypextensions}). At the same time, $\anispart{(q_L)}$ and $p_L$ represent a common non-zero element of $L$, and so $p_L \subset \anispart{(q_L)}$ by the Cassels-Pfister subform theorem. By Witt cancellation, it then follows that (1) holds. \end{proof}\end{lemma}

We apply this lemma in the case where $L = F(q)$. Note first that since $\mydim{p} > 2^s$, $r$ has codimension $>2^s$ in $q$. Since $\witti{1}{q} \leq 2^s$, it follows from \cite[Cor. 4.9]{Vishik1} that $r_{F(q)}$ is anisotropic. As $p_{F(q)}$ is also anisotropic, the lemma therefore tells us that either
\begin{enumerate} \item $q_1 \simeq r_{F(q)} \perp p_{F(q)}$, or
\item $r_{F(q)} \simeq q_1 \perp - p_{F(q)}$. \end{enumerate}
Now (1) cannot hold, since the right-hand side has dimension equal to $\mydim{q}$, which is larger than $\mydim{q_1}$. On the other hand, (2) cannot hold either; indeed, if (2) holds, then
\begin{eqnarray*} \witti{1}{q} &=& \frac{\mydim{q} - \mydim{q_1}}{2} \\
    &=& \frac{\big(\mydim{r} + \mydim{p}\big) - \big(\mydim{r} - \mydim{p}\big)}{2}\\
    &=& \mydim{p}> 2^s, \end{eqnarray*}
which contradicts our assumption. We therefore conclude that $\witti{1}{q} > 2^s$. By Karpenko's theorem on the possible values of the first higher Witt index (\cite{Karpenko2}), it follows that $\mathrm{dim}(q) - \witti{1}{q}$ is divisible by $2^{s+1}$. Now the $k=0$ case of Theorem \ref{thmmain} tells us that $\mydim{q}$ is divisible by $2^{s+1}$, and so the same is therefore true of $\witti{1}{q}$. \end{proof} \end{theorem}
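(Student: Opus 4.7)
The plan is to use induction on $h(q)$. The base case $h(q)=1$ is vacuous because the range $1 \leq r < h(q)$ is empty. In the inductive step, I would first peel off the higher Witt indices $\witti{r}{q}$ for $r \geq 2$ by passing to $F(q)$: Lemma \ref{lemhypextensions} shows that $p_{F(q)}$ is still anisotropic and that $q_1$ becomes hyperbolic over $F(q)(p)$, so applying the induction hypothesis to the pair $(p_{F(q)}, q_1)$ yields divisibility by $2^{s+1}$ of every $\witti{r'}{q_1}$ with $1 \leq r' < h(q_1) = h(q)-1$. Since $\witti{r}{q} = \witti{r-1}{q_1}$ by the construction of the Knebusch tower, this handles all indices $2 \leq r < h(q)$ simultaneously.

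What remains is divisibility of $\witti{1}{q}$ by $2^{s+1}$, and here I would use two already-available ingredients. First, Corollary \ref{thmWittkernel} (the $k=0$ case of Theorem \ref{thmmain}) tells us that $\mydim{q}$ itself is divisible by $2^{s+1}$. Second, Karpenko's theorem on the possible values of the first higher Witt index forces $\mydim{q} - \witti{1}{q}$ to be divisible by $2^{s+1}$ \emph{as soon as} $\witti{1}{q} > 2^s$. Together these two divisibilities close the argument, so everything reduces to proving the single strict inequality $\witti{1}{q} > 2^s$.

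This last inequality is where I expect the main obstacle to lie, and I would attack it by contradiction. Assume $\witti{1}{q} \leq 2^s$. After scaling $p$ and $q$ (which preserves all hypotheses), we may assume both represent $1$. The Cassels-Pfister subform theorem (Proposition \ref{propCP}) then gives an orthogonal decomposition $q \simeq r \perp p$ with $\mydim{r} = \mydim{q} - \mydim{p}$. Because $r$ has codimension $\mydim{p} > 2^s \geq \witti{1}{q}$ in $q$, the form $r_{F(q)}$ remains anisotropic by \cite[Cor.\ 4.9]{Vishik1}; and $p_{F(q)}$ is anisotropic by the lemma cited above. The strategy is then to compare $q_1$, $r_{F(q)}$, and $p_{F(q)}$ via a Gentile--Shapiro style dichotomy: if $q \simeq r \perp p$ with $r_L, p_L$ both anisotropic, then either $\anispart{(q_L)} \simeq r_L \perp p_L$ or $r_L \simeq \anispart{(q_L)} \perp -p_L$. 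Applied with $L = F(q)$, the first alternative is immediately ruled out by a dimension count ($\mydim{r} + \mydim{p} = \mydim{q} > \mydim{q_1}$), while the second alternative forces $\witti{1}{q} = \mydim{p} > 2^s$, contradicting the standing assumption. The delicate point in the whole proof is exactly this step: one must simultaneously exploit the structural decomposition coming from Cassels--Pfister and control the behaviour of anisotropy over $F(q)$ so that the dichotomy can be invoked; everything else is formal.
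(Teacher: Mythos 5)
Your proposal is correct and follows essentially the same route as the paper: induction on $h(q)$ via Lemma \ref{lemhypextensions} to handle $r\geq 2$, then reduction to $\witti{1}{q}>2^s$ via the $k=0$ case of Theorem \ref{thmmain} and Karpenko's theorem, and finally a contradiction argument using Cassels--Pfister, anisotropy of $r_{F(q)}$ and $p_{F(q)}$, and the Gentile--Shapiro dichotomy. The only cosmetic difference is that the paper performs the scaling normalization at the outset rather than inside the contradiction step.
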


In particular, we get the following more precise version of Corollary \ref{thmWittkernel}:

\begin{corollary} \label{corWittkernel} Let $q$ and $p$ be anisotropic quadratic forms of dimension $\geq 2$ over $F$, let $s$ be the unique non-negative integer such that $2^s < \mathrm{dim}(p) \leq 2^{s+1}$, and let $n = \mathrm{deg}(q)$. If $q_{F(p)}$ is hyperbolic, then $n \geq s+1$, and:
\begin{enumerate} \item If $n = s+1$, then $\mathrm{dim}(q) = 2^{s+1}m$ for some odd integer $m$.
\item If $n \geq s+2$, then $\mathrm{dim}(q)$ is divisible by $2^{s+2}$. \end{enumerate}
\begin{remark} We remind the reader that, by \cite[Thm. 4.3]{OVV}, $n = \mathrm{deg}(q)$ coincides with the largest integer $d$ such that $[q] \in I^d(F)$. \end{remark} 
\begin{proof} The inequality $n \geq s+1$ holds by Corollary \ref{corCP}. In view of the fact that
\begin{equation} \label{eq3.2} \mathrm{dim}(q) = 2^n + \sum_{r=1}^{h(q)-1} 2\witti{r}{q}, \end{equation}
the remaining statements follow immediately from Theorem \ref{thmdivisibilityofindices}. \end{proof} \end{corollary}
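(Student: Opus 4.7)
The plan is straightforward: the statement is essentially a routine extraction from Theorem \ref{thmdivisibilityofindices} combined with Corollary \ref{corCP}, read off from the Knebusch splitting tower of $q$. First I would establish $n \geq s+1$. Since $q_{F(p)}$ is hyperbolic, Corollary \ref{corCP} gives $\mydim{p} \leq 2^n$, and combined with the hypothesis $\mydim{p} > 2^s$ this forces $n \geq s+1$.

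For parts (1) and (2), observe that $q$ has even dimension (it lies in the Witt kernel $W(F(p)/F)$), so $q_{h(q)} = 0$. Since $q$ has finite degree $n$, the penultimate term $q_{h(q)-1}$ of the Knebusch splitting tower is, by definition of the degree, similar to an $n$-fold Pfister form; in particular $\mydim{q_{h(q)-1}} = 2^n$ and the terminal higher Witt index $\witti{h(q)}{q}$ equals $2^{n-1}$. Consequently,
\[
\mydim{q} = \sum_{r=1}^{h(q)} 2\witti{r}{q} = 2^n + \sum_{r=1}^{h(q)-1} 2\witti{r}{q}.
\]
By Theorem \ref{thmdivisibilityofindices}, each $\witti{r}{q}$ with $1 \leq r < h(q)$ is divisible by $2^{s+1}$, hence every summand on the right is divisible by $2^{s+2}$. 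Thus $\mydim{q} \equiv 2^n \pmod{2^{s+2}}$.

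The two conclusions are now immediate. If $n = s+1$, then $\mydim{q} \equiv 2^{s+1} \pmod{2^{s+2}}$, i.e., $\mydim{q} = 2^{s+1}m$ for some odd integer $m$. If $n \geq s+2$, then $2^n$ is already a multiple of $2^{s+2}$, and hence so is $\mydim{q}$. There is no substantive obstacle in this argument; all the content is already packaged in Theorem \ref{thmdivisibilityofindices}, and the corollary is a short arithmetic consequence read off from the splitting tower.
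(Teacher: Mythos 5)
Your proof is correct and follows exactly the paper's route: establish $n \geq s+1$ via Corollary \ref{corCP}, use the splitting-tower identity $\mydim{q} = 2^n + \sum_{r=1}^{h(q)-1} 2\witti{r}{q}$, and read off the divisibility from Theorem \ref{thmdivisibilityofindices}. You simply spell out the short arithmetic that the paper leaves implicit.
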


Now, the following statement is well known (see \cite[Lem. 6.2]{Vishik1} for a proof):

\begin{proposition} Let $q$ be an anisotropic quadratic form over $F$ which is divisible by an $m$-fold Pfister form for some integer $m \geq 1$. Then $\witti{r}{q}$ is divisible by $2^{m}$ for all $1 \leq r < h(q)$. \end{proposition}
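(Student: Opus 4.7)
The plan is to induct on $h(q)$, using that divisibility by an anisotropic Pfister form is preserved along the Knebusch splitting tower. The case $h(q) \leq 1$ is vacuous, so assume $h(q) \geq 2$; write $q \simeq \pi \otimes \tau$ with $\pi$ an $m$-fold Pfister form, which must itself be anisotropic since $q$ is. Iterating the step described below, for every $r < h(q)$ I obtain a decomposition $q_r \simeq \pi_{F_r} \otimes \tau_r$ with $\tau_r$ anisotropic over $F_r$, and then reduce the claim to a parity statement about the $\mydim{\tau_r}$.

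The input I would rely on is the classical structural lemma: if $\rho$ is an anisotropic Pfister form over $E$ and $\mu$ is any form over $E$, then $\anispart{(\rho \otimes \mu)}$ is again divisible by $\rho$. The usual proof decomposes $\mu \simeq \mu_0 \perp \mu_1$ so that $\rho \otimes \mu_0$ is hyperbolic while $\rho \otimes \mu_1$ is anisotropic, giving $\anispart{(\rho \otimes \mu)} \simeq \rho \otimes \mu_1$. Applying this at each stage $F_r$ of the Knebusch tower with $r < h(q)$ requires $\pi_{F_r}$ to still be anisotropic, which is forced: otherwise $q_{F_r} = \pi_{F_r} \otimes \tau_{F_r}$ would be hyperbolic, contradicting $r < h(q)$.

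Given the decompositions $q_r \simeq \pi_{F_r} \otimes \tau_r$, we have $\witti{r}{q} = (\mydim{q_{r-1}} - \mydim{q_r})/2 = 2^{m-1}\bigl(\mydim{\tau_{r-1}} - \mydim{\tau_r}\bigr)$, so the statement reduces to showing that $\mydim{\tau_r}$ has a fixed parity across $r < h(q)$. I would deduce this from the constancy of degree along the Knebusch tower, $\mathrm{deg}(q_r) = \mathrm{deg}(q)$, together with the following observation: since $[q_r] = [\pi_{F_r}] \cdot [\tau_r]$ in $W(F_r)$ and $[\pi_{F_r}] \notin I^{m+1} F_r$ (by the Arason-Pfister Hauptsatz, applied to the anisotropic $m$-fold Pfister form $\pi_{F_r}$), one finds that $\mathrm{deg}(q_r) \geq m+1$ exactly when $\mydim{\tau_r}$ is even, and $\mathrm{deg}(q_r) = m$ exactly when $\mydim{\tau_r}$ is odd. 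Constancy of $\mathrm{deg}(q_r)$ thus pins down the parity, and the divisibility of $\witti{r}{q}$ by $2^m$ follows.

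The main obstacle is the structural lemma underpinning the first step—that passing to the anisotropic part preserves divisibility by an anisotropic Pfister form. Although this is classical, a self-contained proof requires some analysis of the annihilator of $\pi$ in $W(F)$ together with an explicit orthogonal decomposition; everything else in the plan reduces to elementary Knebusch-tower bookkeeping plus the Arason-Pfister Hauptsatz. As an aside, once Theorem \ref{thmdivisibilityofindices} of the present paper is available, the proposition becomes an immediate corollary by taking $p = \pi$, since any form divisible by $\pi$ automatically becomes hyperbolic over $F(\pi)$.
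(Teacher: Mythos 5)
The paper does not supply its own proof of this proposition; it simply cites \cite[Lem.~6.2]{Vishik1}, so there is no in-text argument to compare against. Your proposal is a valid self-contained argument and is in the spirit of the classical treatment: reduce to a parity statement for the quotient factors $\tau_r$ along the Knebusch tower, using the structural fact that the anisotropic part of an anisotropic-Pfister multiple is again a multiple of that Pfister form.

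One point worth flagging: the parity step as you have written it uses more than the Arason--Pfister Hauptsatz. The implication ``$\mydim{\tau_r}$ even $\Rightarrow \mathrm{deg}(q_r) \geq m+1$'' is fine, but the converse direction of your biconditional -- concluding $\mathrm{deg}(q_r) = m$ from $\mydim{\tau_r}$ odd -- requires knowing that $[q_r] \notin I^{m+1}(F_r)$ forces $\mathrm{deg}(q_r) \leq m$, i.e.\ the hard direction of the Orlov--Vishik--Voevodsky theorem (cited in \S\ref{subsecknebusch}). That result is available, so the argument is not wrong, but it is a heavier input than you acknowledged. You can avoid it entirely by arguing at the level of consecutive stages: since $[q_{r-1}]$ and $[q_r]$ agree in $W(F_r)$, the Witt class $[\tau_{r-1,F_r}] - [\tau_r]$ lies in the annihilator of $[\pi_{F_r}]$, and the annihilator of an anisotropic Pfister form is contained in the fundamental ideal $I$ (if $\alpha\pi = 0$ with $\alpha \equiv \form{a} \pmod{I}$, then $a\pi \in I^{m+1}$, which contradicts the Hauptsatz since $a\pi$ is anisotropic of dimension $2^m$). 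This gives $\mydim{\tau_{r-1}} \equiv \mydim{\tau_r} \pmod 2$ directly, which is exactly what the dimension count $\witti{r}{q} = 2^{m-1}(\mydim{\tau_{r-1}} - \mydim{\tau_r})$ requires. A second minor caveat: the orthogonal decomposition $\mu \simeq \mu_0 \perp \mu_1$ with $\rho\otimes\mu_0$ hyperbolic need not be achievable with $\mu_0, \mu_1$ literal subforms of $\mu$; the classical lemma only produces some form $\mu_1$ (obtained by chain-modifying the entries of $\mu$ through similarity factors of $\rho$) with $\anispart{(\rho\otimes\mu)} \simeq \rho\otimes\mu_1$. Your argument only uses the dimensions, so this does not affect its validity. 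Finally, you are right that the proposition follows immediately from Theorem~\ref{thmdivisibilityofindices} by taking $p$ to be the Pfister form; that is indeed why the paper places the proposition just after that theorem.
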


Theorem \ref{thmdivisibilityofindices} therefore raises the following question:

\begin{question} Let $p$ and $q$ be anisotropic quadratic forms of dimension $\geq 2$ over $F$, and let $s$ be the unique non-negative integer such that $2^s < \mydim{p} \leq 2^{s+1}$. If $q_{F(p)}$ is hyperbolic, does it follow that $q$ is divisible by an $(s+1)$-fold Pfister form? \end{question}

While it perhaps seems unlikely that this would hold in general, we are unable to provide a counterexample. At the same time, we do expect that the question has a positive answer in the case where $\mathrm{deg}(q)$ takes its smallest possible value of $s+1$. In fact, the hyperbolicity of $q_{F(p)}$ should imply that $p$ is a Pfister neighbour in this case; by \cite{ArasonPfister}, this would give that $q$ is isometric to the product of an odd-dimensional form and the ambient Pfister form of $p$. This expectation goes back to Fitzgerald (\cite{Fitzgerald2}), and we provide further evidence in its favour in \S \ref{secadditionalresults} below. In particular, we show that if $\mathrm{deg}(q) = s+1$ and $q_{F(p)}$ is hyperbolic, then the upper motive of the quadric $\lbrace p = 0 \rbrace$ is a twisted form of a Rost motive (Corollary \ref{corbinarymotive}). By a result of Karpenko (\cite{Karpenko1}), this implies that our claim holds if $s \leq 3$. Hypothetically, it should imply for all $s$ (\cite[Conj. 4.21]{Vishik1}).

\subsection{Fitzgerald's theorem revisited} \label{subsecfitz} The proof of Corollary \ref{corWittkernel} immediately yields the following lower bound for the dimension of an element of the Witt kernel $W(F(p)/F)$:

\begin{corollary} \label{corWittkernellowerbound} Let $p$ and $q$ be anisotropic quadratic forms of dimension $\geq 2$ over $F$, let $s$ be the unique non-negative integer such that $2^s<\mydim{p} \leq 2^{s+1}$, and let $n = \mathrm{deg}(q)$. If $q_{F(p)}$ is hyperbolic, then $n \geq s+1$ and 
$$ \mathrm{dim}(q) \geq 2^n + (h(q)-1)2^{s+2}, $$
with equality holding if and only if $\witti{r}{q} = 2^{s+1}$ for all $1 \leq r < h(q)$. \end{corollary}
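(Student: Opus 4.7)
The plan is simply to combine the two preceding results of this subsection through the standard Knebusch-tower decomposition of $\mydim{q}$. First, I would dispatch the inequality $n \geq s+1$ by citing Corollary \ref{corCP}: it yields $\mydim{p} \leq 2^n$, and since $\mydim{p} > 2^s$ by hypothesis, we must have $n \geq s+1$.

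For the dimension bound, I would use the identity \eqref{eq3.2},
$$ \mydim{q} = 2^n + \sum_{r=1}^{h(q)-1} 2\witti{r}{q}. $$
This is valid because $q$ is even-dimensional (being hyperbolic over $F(p)$) and $q_{h(q)-1}$ is, by definition of $n = \mathrm{deg}(q)$, similar to an $n$-fold Pfister form; hence the last step of the splitting tower contributes exactly $2^n$ to $\mydim{q}$. Theorem \ref{thmdivisibilityofindices} then says that each $\witti{r}{q}$ with $1 \leq r < h(q)$ is divisible by $2^{s+1}$, and since these higher Witt indices are by construction positive, they satisfy $\witti{r}{q} \geq 2^{s+1}$. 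Plugging into the identity gives
$$ \mydim{q} \geq 2^n + (h(q)-1) \cdot 2^{s+2}, $$
with equality if and only if every such $\witti{r}{q}$ attains the minimal value $2^{s+1}$.

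There is no real obstacle here beyond bookkeeping: all the substance is in Theorem \ref{thmdivisibilityofindices}, and the present statement is just its numerical consequence. The only edge case to mention is $h(q) = 1$, in which the sum is empty, $q$ itself is similar to an $n$-fold Pfister form, and both sides of the inequality coincide with $2^n$, so equality holds vacuously.
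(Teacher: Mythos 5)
Your proof is correct and is essentially the same as the paper's, which also cites Corollary~\ref{corCP} for the degree bound and then combines the splitting-pattern identity \eqref{eq3.2} with the divisibility of the higher Witt indices from Theorem~\ref{thmdivisibilityofindices}. There is nothing further to add.
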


In particular, we get a satisfying explanation of Fitzgerald's theorem on  the low-dimensional part of $W(F(p)/F)$:

\begin{corollary}[{Fitzgerald, \cite[Thm. 1.6]{Fitzgerald1}}] \label{corFitzgerald} Let $p$ and $q$ be anisotropic quadratic forms of dimension $\geq 2$ over $F$ such that $q_{F(p)}$ is hyperbolic, and let $n = \mathrm{deg}(q)$. If $\mathrm{dim}(p) > \frac{1}{2}(\mathrm{dim}(q) - 2^n)$, then $q$ is similar to a Pfister form.
\begin{proof} To say that $q$ is similar to a Pfister form is equivalent to saying that $h(q) =1$ (\cite[Thm. 5.8]{Knebusch1}). Let $s$ be the unique non-negative integer such that $2^s < \mathrm{dim}(p) \leq 2^{s+1}$. Then, by hypothesis, we have
$$ \mathrm{dim}(q) < 2^n + 2\mathrm{dim}(p) \leq 2^n + 2^{s+2}. $$
By Corollary \ref{corWittkernellowerbound}, it follows that $h(q) = 1$, as desired.\end{proof}\end{corollary}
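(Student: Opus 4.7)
The plan is to reduce the desired conclusion to showing $h(q) = 1$, using the classical theorem of Knebusch (\cite[Thm. 5.8]{Knebusch1}) which states that an anisotropic even-dimensional form is similar to a Pfister form if and only if it has height one. Once this reduction is in place, the real engine of the argument will be the sharp lower bound on $\mydim{q}$ furnished by the preceding Corollary \ref{corWittkernellowerbound}.

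Concretely, I would first introduce the unique non-negative integer $s$ with $2^s < \mydim{p} \leq 2^{s+1}$; in particular, $2\mydim{p} \leq 2^{s+2}$. The hypothesis $\mydim{p} > \tfrac{1}{2}(\mydim{q} - 2^n)$ then rearranges to the dimension bound
$$ \mydim{q} < 2^n + 2\mydim{p} \leq 2^n + 2^{s+2}. $$
On the other hand, applying Corollary \ref{corWittkernellowerbound} to the pair $(p,q)$ gives
$$ \mydim{q} \geq 2^n + (h(q)-1)\cdot 2^{s+2}. $$
Comparing the two inequalities yields $(h(q)-1)\cdot 2^{s+2} < 2^{s+2}$, so $h(q) \leq 1$. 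Since $q$ is anisotropic of dimension $\geq 2$, certainly $h(q) \geq 1$, and hence $h(q) = 1$, which is what was wanted.

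There is no serious obstacle here beyond the results already in place: all the genuine content of the argument is absorbed into Corollary \ref{corWittkernellowerbound} and, ultimately, into Theorem \ref{thmdivisibilityofindices}. In this way, what was historically one of the deeper facts in the study of Witt kernels becomes a near-immediate consequence of the divisibility of the higher Witt indices of $q$ by $2^{s+1}$ established earlier in this section; no further quadratic-form-theoretic input is required beyond the classical characterization of Pfister forms by height one.
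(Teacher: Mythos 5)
Your proof is correct and follows exactly the same route as the paper: reduce to showing $h(q)=1$ via Knebusch's height-one characterization, then combine the hypothesis $\mathrm{dim}(q) < 2^n + 2^{s+2}$ with the lower bound from Corollary \ref{corWittkernellowerbound}. The only difference is that you spell out the final arithmetic comparison explicitly, which the paper leaves implicit.
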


\begin{remark} As noted by Fitzgerald (see \cite[\S 2]{Fitzgerald1}), the inequality of Corollary \ref{corFitzgerald} cannot be weakened in general. Here we can clarify the situation further: Let $p$ and $q$ be as in the statement of Corollary \ref{corWittkernellowerbound}. If we are in the border case where $\mathrm{dim}(p) = \frac{1}{2}(\mathrm{dim}(q) - 2^n)$, then our result shows that $h(q) = 2$, $\mathrm{dim}(q) = 2^n + 2^{s+2}$ and $\mathrm{dim}(p) = 2^{s+1}$. By a theorem of Vishik (\cite[Thm. 6.4]{Vishik1}), any degree-$n$ form of height $\geq 2$ has dimension $\geq 2^n + 2^{n-1}$. We therefore have $n-3 \leq s \leq n-1$. All three cases can occur. In fact, $q$ should, in this situation, be the product of an $(s+1)$-fold Pfister form and either an Albert form (if $s=n-3$), a $4$-dimensional form of non-trivial discriminant (if $s=n-2$) or a form of dimension $3$ (if $s = n-1$) -- see \cite[\S 2.4]{Kahn2}. This has been shown to be the case when $n \leq 3$ (\emph{loc. cit.}), and can also be proven for $n=4$ using more recent results (though this is absent from the literature). The general case remains out of reach at present. \end{remark}

\section{Proof of Theorem \ref{thmmain}} \label{secproof} In this section, we give a concrete geometric reformulation of Conjecture \ref{conjmain}. Theorem \ref{thmmain} then follows as a consequence of the results of \cite{Vishik2}. We begin by making the following observation:

\begin{lemma} \label{lemreformulatedconj} Let $d$, $k$ and $s$ be positive integers. Assume that $d+k$ is even and that $k < 2^s$. Then the following are equivalent:
\begin{enumerate} \item $d= a2^{s+1} + \epsilon$ for some non-negative integer $a$ and some integer $-k \leq \epsilon \leq k$.
\item $\frac{d+k}{2} = a2^s + \mu$ for some non-negative integer $a$ and some integer $0 \leq \mu \leq k$.
\item The binomial coefficient
$$ \binom{\frac{d+k}{2}}{l} $$
is even for every $k < l \leq m$, where $m = \begin{cases} 2^{s} -2 & \text{if $k \geq 2^{s-1}$} \\
                                                        2^{s-1}  & \text{if $k < 2^{s-1}.$} \end{cases}$ \end{enumerate}
\begin{proof} The equivalence of (1) and (2) is clear.
To prove the equivalence of (2) and (3), we need to recall the basic criterion for oddness of binomial coefficients. Let $x$ be a non-negative integer, and let $\sum_{i=0}^\infty x_i2^i$ be its $2$-adic expansion. We write $\pi(x)$ for the (finite) set of all non-negative integers $i$ for which $x_i$ is non-zero. We then have:

\begin{lemma}[{see \cite[Lem. 3.4.2]{Springer}}] \label{lembinomialparity} Let $x$ and $y$ be non-negative integers. Then the binomial coefficient $\binom{x}{y}$ is odd if and only if $\pi(y) \subset \pi(x)$. \end{lemma}

Returning to the proof of Lemma \ref{lemreformulatedconj}, let us write
$$ \frac{d+k}{2} = a2^s + \mu $$
for some non-negative integer $a$ and some integer $0 \leq \mu < 2^s$. If $0 \leq l < 2^s$, then Lemma \ref{lembinomialparity} implies that
$$ \binom{\frac{d+k}{2}}{l} \equiv \binom{\mu}{l} \pmod{2}. $$
The equivalence $(2) \Leftrightarrow (3)$ therefore amounts to the assertion that $\mu \leq k$ if and only if $\binom{\mu}{l}$ is even for every $k < l \leq m$. The ``only if'' implication here is trivial. Conversely, suppose that $\binom{\mu}{l}$ is even for every $k < l \leq m$. Since $\binom{\mu}{\mu} =1$, it follows that either $\mu \leq k$ or $\mu >m$. If $\mu > m$, then (by the definition of $m$) we either have
\begin{enumerate} \item[(i)] $\mu = 2^s-1$, or
\item[(ii)] $k < 2^{s-1} = m < \mu$. \end{enumerate}
By Lemma \ref{lembinomialparity}, $\binom{2^s-1}{l}$ is odd for every $0\leq l \leq 2^{s-1}$, so if (i) holds, then we must have that $k = 2^s-1=m$. If (ii) holds, then $\binom{\mu}{2^{s-1}}$ is even by hypothesis. On the other hand, the inequalities $2^{s-1} < \mu < 2^s$ imply in this case that $s-1 \in \pi(\mu)$. By Lemma \ref{lembinomialparity}, this means that $\binom{\mu}{2^{s-1}}$ is odd, giving us a contradiction. We conclude that $\mu \leq k$, and so the equivalence of (2) and (3) is proved. \end{proof} \end{lemma}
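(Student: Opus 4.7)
\emph{Proof plan.} The plan is to decouple the two non-trivial equivalences. The link between (1) and (2) should be a straightforward change of variables, while (2)$\Leftrightarrow$(3) is really a statement about binary digits that can be handled by the classical criterion on the parity of binomial coefficients (Lucas's theorem).

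To prove (1)$\Leftrightarrow$(2), I will set $\mu := (k+\epsilon)/2$. Since $d+k$ is assumed even, $\epsilon$ has the same parity as $k$, so $\mu$ is a non-negative integer, and the constraint $-k \leq \epsilon \leq k$ translates to $0 \leq \mu \leq k$. Dividing the identity $d = a\cdot 2^{s+1}+\epsilon$ by $2$ after adding $k$ to both sides yields $(d+k)/2 = a\cdot 2^s + \mu$, so $\epsilon \mapsto \mu$ is a bijection that matches (1) with (2).

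For (2)$\Leftrightarrow$(3), I will write $(d+k)/2 = a \cdot 2^s + \mu$ with $0 \leq \mu < 2^s$ uniquely. Since every $l \leq m < 2^s$ has its binary support contained in $\lbrace 0,\ldots,s-1 \rbrace$, Lucas's theorem yields $\binom{(d+k)/2}{l} \equiv \binom{\mu}{l} \pmod 2$ for such $l$. The forward direction is then immediate: if $\mu \leq k$, then $\binom{\mu}{l} = 0$ whenever $l > \mu$, so (3) holds. For the converse, $\binom{\mu}{\mu}=1$ is odd, so the hypothesis of (3) forces $\mu \notin (k,m]$, giving $\mu \leq k$ or $\mu > m$. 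The remaining job is to rule out the second option using the two-case definition of $m$.

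The main technical step, and what I expect to be the main obstacle, is excluding $\mu > m$. When $k < 2^{s-1}$ (so $m = 2^{s-1}$), the inequalities $2^{s-1} < \mu < 2^s$ place bit $s-1$ in $\pi(\mu)$, which by Lucas makes $\binom{\mu}{2^{s-1}}$ odd; since $2^{s-1}$ lies in the range $(k,m]$, this contradicts (3). When $k \geq 2^{s-1}$ (so $m = 2^s-2$), the only option is $\mu = 2^s-1$, and then every $\binom{2^s-1}{l}$ with $l \leq 2^s-1$ is odd; to avoid contradicting (3) the interval $(k,m]$ must be empty, and I will need careful bookkeeping to pin down $\mu \leq k$ from the boundary configurations. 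The delicacy here is that (3) can be satisfied vacuously in this regime, so the argument has to extract $\mu \leq k$ from the geometry of the interval $(k,m]$ rather than from any non-trivial binomial identity.
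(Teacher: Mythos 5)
Your plan follows exactly the paper's route: the change of variables $\mu = (k+\epsilon)/2$ for $(1)\Leftrightarrow(2)$, then the Lucas criterion to reduce $(2)\Leftrightarrow(3)$ to a statement about the residue $\mu$ of $(d+k)/2$ modulo $2^s$, with the only substantive step being the exclusion of $\mu > m$.

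Your worry about the regime $k\geq 2^{s-1}$ is well placed, and you should not expect to close it by simply tidying up the bookkeeping, because the paper's own argument there is flawed and the lemma as stated actually fails on the boundary. In that regime $m=2^s-2$, so $\mu>m$ forces $\mu=2^s-1$; then every $\binom{2^s-1}{l}$ with $0\leq l\leq 2^s-1$ is odd, so (3) forces the interval $(k,m]$ to be empty, i.e.\ $k\geq 2^s-2$. This leaves $k=2^s-2$ as a live possibility, and then $\mu=2^s-1>k$, so (2) fails while (3) holds vacuously. A concrete counterexample is $(d,k,s)=(12,2,2)$: here $k\geq 2^{s-1}$ and $m=2$, so the range $2<l\leq 2$ is empty and (3) is vacuous, yet $(d+k)/2=7=4\cdot 1+3$ gives $\mu=3>k=2$, so (2) fails, and indeed $12$ is not of the form $8a+\epsilon$ with $|\epsilon|\leq 2$ and $a\geq 0$. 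The paper's line ``we must have that $k=2^s-1=m$'' is not correct as written (in case (i) one has $m=2^s-2$, and $k=2^s-2$ is not ruled out). The natural repair is to set $m=2^s-1$ rather than $2^s-2$ when $k\geq 2^{s-1}$; then case (i) cannot arise since $\mu<2^s$, and your argument closes cleanly. But one must then recheck that, in the downstream applications, Corollary \ref{corbinomials} actually supplies evenness up to $l=2^s-1$; for $\mathrm{dim}(p)=2^{s+1}-2$ this requires the extra index available only when $p_{F(q)}$ is not split. So: your instinct that this boundary case needs a separate, careful treatment is correct, and a completion that mimics the paper's text would inherit its error.
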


Lemma \ref{lemreformulatedconj} reduces Conjecture \ref{conjmain} to the problem of checking the parity of certain binomial coefficients. The point now is that these coefficients can be extracted from the action of the Steenrod algebra on the mod-2 Chow ring of the quadric defined by $q$. For the remainder of this section, let $p$ and $q$ be anisotropic quadratic forms of dimension $\geq 2$ over $F$, and let $Q$ be the projective quadric of equation $q=0$. We fix an algebraic closure $\overline{F}$ (resp. $\overline{F(p)}$) of $F$ (resp. $F(p)$). For any $j \geq 0$, let $S^j$ denote the $j$-th Steenrod operation of cohomological type on the mod-2 Chow ring of a smooth quasi-projective variety.

\begin{lemma}\label{lembinomialeven} Fix integers $0 \leq r \leq [\mydim{Q}/2]$ and $0 \leq j \leq r$. Then the element $S^j(l_r) \in \chow{Q_{\overline{F}}}$ is $F$-rational if and only if the binomial coefficient
$$ \binom{\mydim{q} -r - 1}{\mydim{q} - r - 1 - j}$$
is even.
\begin{proof} By \cite[Cor. 78.5]{EKM}, we have
$$ S^j(l_r) = \binom{\mydim{q} -r - 1}{j} l_{r-j}. $$
If $S^j(l_r)$ is $F$-rational, then the coefficient here must be even by Lemma \ref{lemrationality} (because $q$ is anisotropic). The converse is trivial. \end{proof} \end{lemma}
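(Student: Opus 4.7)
The plan is a direct computation via the explicit formula for Steenrod operations on the mod-$2$ Chow group of a smooth projective quadric. I would invoke \cite[Cor.~78.5]{EKM}, which gives
$$ S^j(l_r) = \binom{\mydim{q} - r - 1}{j}\, l_{r-j}$$
in $\chow{Q_{\overline{F}}}$. By the elementary symmetry $\binom{n}{k} = \binom{n}{n-k}$, the coefficient appearing here is congruent modulo $2$ to the binomial coefficient displayed in the statement, so the question reduces to determining when this coefficient is even.

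If it is even, then $S^j(l_r) = 0 \in \chow{Q_{\overline{F}}}$, and the zero class is trivially $F$-rational. Conversely, suppose the coefficient is odd. Then $S^j(l_r) = l_{r-j}$ in $\chow{Q_{\overline{F}}}$. The hypothesis $0 \leq j \leq r \leq [\mydim{Q}/2]$ ensures $0 \leq r-j \leq [\mydim{Q}/2]$, so Lemma \ref{lemrationality} applies to the class $l_{r-j}$: its $F$-rationality would force $\windex{q} > r-j \geq 0$, contradicting the anisotropy of $q$. Hence $S^j(l_r)$ fails to be $F$-rational in this case.

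No genuine obstacle appears here: the entire argument is a one-line consequence of the explicit formula for the action of $S^j$ on the subspace classes $l_r$ recorded in \cite{EKM}, combined with the Springer-type rationality criterion already recalled as Lemma \ref{lemrationality}. The only minor point worth noting is verifying that $r-j$ lies in the range $[0,[\mydim{Q}/2]]$ where Lemma \ref{lemrationality} is formulated, which is immediate from the hypotheses on $r$ and $j$.
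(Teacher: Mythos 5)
Your proof is correct and follows essentially the same route as the paper: apply the explicit formula for $S^j(l_r)$ from \cite[Cor.~78.5]{EKM}, then use Lemma \ref{lemrationality} together with anisotropy of $q$ to conclude. You spell out the two directions a bit more explicitly (including the binomial symmetry reconciling the two forms of the coefficient, and the range check on $r-j$), but the substance is identical to what the paper does.
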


If $Y$ is a variety over $F$, then an element $\alpha \in \chow{Y_{\overline{F}}}$ will be called \emph{$F(p)$-rational} if $\alpha_{\overline{F(p)}} \in \chow{Y_{\overline{F(p)}}}$ is $F(p)$-rational. We recall the following result of Vishik (extended to odd characteristic in \cite{Fino}):

\begin{theorem}[{\cite[Cor. 3.5]{Vishik2}, \cite[Thm. 1.1, Prop. 2.1]{Fino}}] \label{thmmaintool} Let $Y$ be a smooth quasi-projective variety over $F$, and let $\alpha$ be an $F(p)$-rational element of $\mathrm{Ch}^m(Y_{\overline{F}})$. If the integral Chow group $\mathrm{CH}(Y_{\overline{F}})$ is torsion-free, then:
\begin{enumerate} \item $S^j(\alpha)$ is $F$-rational for every $j > m - \left[(\mydim{p}-1)/2\right]$.
\item If $p_{F(Y)}$ is not split, then $S^j(\alpha)$ is also $F$-rational for $j = m - \left[(\mydim{p}-1)/2\right]$. \end{enumerate} \end{theorem}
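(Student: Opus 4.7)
The plan is to reduce the $F$-rationality question for $S^j(\alpha)$ to an analysis of algebraic cycles on the product $P \times Y$, where $P$ denotes the projective quadric of equation $p = 0$. The crucial observation is that $Y_{F(p)}$ is the generic fiber of the first projection $P \times Y \to P$, so $F(p)$-rationality of a class on $Y$ is naturally encoded as liftability to $\mathrm{Ch}(P \times Y)$. The first step is to apply the localization exact sequence for Chow groups to lift an $F(p)$-rational representative of $\alpha$ to a class $\tilde\alpha \in \mathrm{Ch}^m(P \times Y)$ whose restriction to the generic fiber recovers that representative.

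After base change to $\overline F$, since $\mathrm{Ch}(P_{\overline F})$ is free over $\mathbb{F}_2$ on the classes $l_i, h^i$, I would expand
\[ \tilde\alpha_{\overline F} = \sum_i \bigl(l_i \times \beta_i\bigr) + \sum_i \bigl(h^i \times \gamma_i\bigr) \]
with $\beta_i, \gamma_i \in \mathrm{Ch}(Y_{\overline F})$. Restriction to the generic fiber of $P_{\overline F}$ extracts the coefficient of $h^0 = [P_{\overline F}]$, so $\gamma_0 = \alpha$. Since Steenrod operations preserve $F$-rationality, $S^j(\tilde\alpha)$ is also $F$-rational; applying Cartan's formula together with the identities $S^a(h^i) = \binom{i}{a} h^{i+a}$ and $S^a(l_i) = \binom{\mydim{p}-i-1}{a} l_{i-a}$, the only contribution to the $h^0$-Künneth coefficient of $S^j(\tilde\alpha_{\overline F})$ comes from $S^j$ acting on the summand $h^0 \times \gamma_0$, and so this coefficient is precisely $S^j(\alpha)$.

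The main obstacle is extracting this $h^0$-coefficient as an $F$-rational class on $Y$: the individual Künneth summands of an $F$-rational product cycle need not themselves be $F$-rational. The idea is to lift $\tilde\alpha$ to the integral Chow group $\mathrm{CH}(P \times Y)$ (here using the torsion-freeness hypothesis on $\mathrm{CH}(Y_{\overline F})$ to ensure a clean integral Künneth decomposition) and then invoke Vishik's machinery of symmetric operations in algebraic cobordism developed in \cite{Vishik2}, with the odd-characteristic extension in \cite{Fino}. These operations refine the ordinary Steenrod squares and, combined with proper pushforward along $P \times Y \to Y$, produce an $F$-rational class on $Y$ whose image in $\mathrm{Ch}(Y_{\overline F})$ equals $S^j(\alpha)$; the bound $j > m - [(\mydim{p}-1)/2]$ reflects the range in which the error terms arising from the lower Künneth coefficients can be controlled by the cobordism-theoretic shifts. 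For part (2), the boundary value $j = m - [(\mydim{p}-1)/2]$, a single residual term would normally obstruct the extraction; the non-splitness of $p_{F(Y)}$ eliminates this term, since a would-be obstruction would have to come from a rational cycle on the quadric $P_{F(Y)}$ whose shape contradicts Lemma \ref{lemrationality}.
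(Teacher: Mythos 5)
The paper does not prove Theorem~\ref{thmmaintool} at all: it is quoted verbatim as a known result of Vishik \cite[Cor.~3.5]{Vishik2}, with the odd-characteristic case supplied by Fino \cite{Fino}, and no argument is given. So there is no ``paper's own proof'' to compare against; I can only assess your sketch as an attempted reconstruction of the cited proofs.

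At a high level your outline tracks Vishik's strategy: lift the $F(p)$-rational class $\alpha$ to $\tilde\alpha \in \mathrm{Ch}^m(P\times Y)$ via localization, expand $\tilde\alpha_{\overline F}$ over the K\"unneth basis $\{l_i,h^i\}$ of $\mathrm{Ch}(P_{\overline F})$, and observe (correctly, by the Cartan formula and the action of $S^a$ on $h^i$ and $l_i$) that the $h^0$-K\"unneth component of $S^j(\tilde\alpha_{\overline F})$ is $S^j(\alpha)$. But the step that actually proves the theorem --- showing that this particular K\"unneth component is $F$-rational in the stated range of $j$ --- is exactly the one you wave at. The naive pushforward $\pi_{2*}$ along $P\times Y\to Y$ extracts the $l_0$-coefficient $\beta_0$, not the $h^0$-coefficient $\gamma_0=\alpha$, so some nontrivial device is needed to neutralize the contributions of the $l_i\times\beta_i$ terms; Vishik's device is the symmetric operations in algebraic cobordism, which supply extra $2$-divisibility that ordinary Steenrod squares do not, and the bound $j>m-[(\mydim{p}-1)/2]$ is precisely the output of that computation. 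Your one sentence (``reflects the range in which the error terms\ldots can be controlled by the cobordism-theoretic shifts'') names the difficulty without engaging it, and the treatment of the boundary case~(2) is similarly declarative. Worse, invoking ``Vishik's machinery of symmetric operations\ldots developed in \cite{Vishik2}'' as an external input is circular: that machinery is built in \cite{Vishik2} specifically in order to prove Cor.~3.5, so it cannot be taken as a prior lemma from which to deduce it. In short, the framing is correct, but the mathematical content of the theorem is left untouched.
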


Applying Theorem \ref{thmmaintool} to the quadric $Q$, we get the following:

\begin{corollary} \label{corvishik} Let $r =  \windex{q_{F(p)}}-1$, and consider the element $l_r \in \chow{Q_{\overline{F}}}$. Then:
\begin{enumerate} \item $S^j(l_r)$ is $F$-rational for all $j > \mydim{Q}-r - [(\mydim{p}-1)/2]$.
\item If $p_{F(q)}$ is not split, then $S^j(l_r)$ is also $F$-rational for $j = \mydim{Q} -r - [(\mydim{p}-1)/2]$ \end{enumerate}
\begin{proof} Since $r = \windex{q_{F(p)}} - 1$, the element $l_r$ is $F(p)$-rational. As it is represented by a cycle of dimension $r$, the assertions follow immediately from Vishik's theorem. \end{proof} \end{corollary}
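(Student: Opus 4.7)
The plan is to simply verify the hypotheses of Vishik's theorem (Theorem \ref{thmmaintool}) and apply it directly, taking $Y = Q$ and $\alpha = l_r$. In this setting, $l_r \in \chow{Q_{\overline{F}}}$ is a class of dimension $r$ and therefore of codimension $m = \mydim{Q} - r$, so the bound $j > m - [(\mydim{p}-1)/2]$ in the theorem becomes precisely $j > \mydim{Q} - r - [(\mydim{p}-1)/2]$, matching the statement.

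Two conditions must be checked. First, the integral Chow group $\mathrm{CH}(Q_{\overline{F}})$ of a smooth projective quadric over an algebraically closed field is torsion-free; this is a standard fact from \cite[\S 68]{EKM} and is used implicitly in the description of $\chow{Q_{\overline{F}}}$ recalled in \S\ref{subsecChowsplit}. Second, we must verify that $l_r$ is $F(p)$-rational. By choice, $r + 1 = \windex{q_{F(p)}}$, so the quadric $Q_{F(p)}$ contains a projective linear subspace of dimension $r$ defined over $F(p)$. The mod-2 class of such a subspace lies in the image of the restriction $\chow{Q_{F(p)}} \to \chow{Q_{\overline{F(p)}}}$, and by the basis description of $\chow{Q_{\overline{F(p)}}}$ it coincides with $l_r$. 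Hence $l_r$ is $F(p)$-rational in the sense defined prior to Theorem \ref{thmmaintool}.

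Part (1) now follows at once from part (1) of Theorem \ref{thmmaintool}. For part (2), note that $F(Y) = F(Q) = F(q)$, so the hypothesis that $p_{F(q)}$ is not split is exactly the hypothesis of part (2) of Theorem \ref{thmmaintool}, which then yields the $F$-rationality of $S^j(l_r)$ in the boundary case $j = \mydim{Q} - r - [(\mydim{p}-1)/2]$ as well.

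There is essentially no obstacle here: once Vishik's theorem is in hand, the corollary amounts to bookkeeping with codimension and to the elementary observation that existence of an $F(p)$-rational $r$-dimensional linear subspace of $Q_{F(p)}$ is equivalent to the $F(p)$-rationality of $l_r$. The genuine difficulty is entirely contained in Theorem \ref{thmmaintool} itself; the role of the corollary is only to package that result into the form needed for the subsequent argument involving the binomial-coefficient criterion of Lemma \ref{lembinomialeven}.
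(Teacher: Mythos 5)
Your proof is correct and follows exactly the same route as the paper: apply Theorem~\ref{thmmaintool} to $Y = Q$ and $\alpha = l_r$, after noting that $l_r$ has codimension $\mydim{Q}-r$, that $l_r$ is $F(p)$-rational since $\windex{q_{F(p)}} = r+1$ produces an $r$-dimensional projective linear subspace on $Q_{F(p)}$, and that $F(Q) = F(q)$. The only difference is that you spell out the torsion-freeness of $\mathrm{CH}(Q_{\overline{F}})$ and the rationality check, which the paper compresses into one sentence.
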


Now, as in the statement of Conjecture \ref{conjmain}, let $k = \mydim{q} - 2\windex{q_{F(p)}}$ (i.e., let $k$ be the dimension of the anisotropic part of $q_{F(p)}$). 

\begin{corollary} \label{corbinomials} The binomial coefficient
$$ \binom{\frac{\mydim{q} +k}{2}}{l} $$
is even for all $k<l \leq [(\mydim{p}-1)/2]$ $($and also for $l = [(\mydim{p}-1)/2]+1$ if $p_{F(q)}$ is not split$)$.
\begin{proof} To simplify the notation, let $n = [(\mydim{p}-1)/2]$. Let $r = \windex{q_{F(p)}}-1$, and let $\mydim{Q} - r-n \leq j \leq r$. If $j = \mydim{Q} - r-n$, assume additionally that $p_{F(q)}$ is not split. Then, by Corollary \ref{corvishik}, $S^j(l_r)$ is $F$-rational. Since $j \leq r$, Lemma \ref{lembinomialeven} then implies that the binomial coefficient
$$ \binom{\mydim{q} - r - 1}{\mydim{q} - r - 1 - j}$$
is even. It now only remains to observe that 
$$ \mydim{q} - r -1 = \frac{\mydim{q}+k}{2},$$
and that
$$ \mydim{Q} - r-n \leq j \leq r $$
if and only if
$$ k < \mydim{q} - r - 1 - j \leq n+1.  $$ \end{proof} \end{corollary}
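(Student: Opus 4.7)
The plan is to obtain Corollary \ref{corbinomials} as an immediate translation of Corollary \ref{corvishik} through Lemma \ref{lembinomialeven}. First I would fix the notation $n = [(\mydim{p}-1)/2]$ and set $r = \windex{q_{F(p)}} - 1$, so that $l_r \in \chow{Q_{\overline{F}}}$ is $F(p)$-rational. Unpacking the definition $k = \mydim{q} - 2\windex{q_{F(p)}}$ then yields the key identity
$$\mydim{q} - r - 1 = \frac{\mydim{q}+k}{2},$$
which will supply the upper entry of the binomial coefficient in the statement.

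Next, I would feed $l_r$ into Corollary \ref{corvishik} to get $F$-rationality of $S^j(l_r)$ for every $j$ in the range $\mydim{Q} - r - n < j \leq r$, together with the endpoint $j = \mydim{Q} - r - n$ under the additional hypothesis that $p_{F(q)}$ is not split. Since $q$ is anisotropic we have $r \leq [\mydim{Q}/2]$, so Lemma \ref{lembinomialeven} legitimately applies to each such $j \leq r$ and converts the $F$-rationality of $S^j(l_r)$ into the evenness of
$$\binom{\mydim{q} - r - 1}{\mydim{q} - r - 1 - j}.$$

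The final step is the change of variables $l := \mydim{q} - r - 1 - j$. A short computation shows that $j \leq r$ is equivalent to $l > k$, while $j > \mydim{Q} - r - n$ is equivalent to $l < n+1$; the boundary case $j = \mydim{Q} - r - n$ contributes precisely $l = n+1$. Combined with the identity above, this yields exactly the assertion of the corollary. I do not foresee any genuine obstacle, as the argument reduces to bookkeeping once Corollary \ref{corvishik} is available; the only point requiring a moment's care is confirming that the $j$-interval furnished by that corollary lies in $[0,r]$, so that the hypotheses of Lemma \ref{lembinomialeven} are met throughout.
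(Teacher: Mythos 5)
Your proposal is correct and follows essentially the same route as the paper: feed $l_r$ (with $r = \windex{q_{F(p)}} - 1$) into Corollary \ref{corvishik}, translate the resulting $F$-rationality of $S^j(l_r)$ into evenness of a binomial coefficient via Lemma \ref{lembinomialeven}, and perform the change of variables $l = \mydim{q} - r - 1 - j$. Your extra remark about checking that $r \leq [\mydim{Q}/2]$ and $j \geq 0$ is a reasonable sanity check that the paper leaves implicit, but it does not change the argument.
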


This yields Theorem \ref{thmmain}, as well as some other cases of Conjecture \ref{conjmain}:

\begin{theorem} Conjecture \ref{conjmain} is true in the following cases:
\begin{enumerate} \item The case where $k<2^{s-1}$.
\item The case where $2^{s+1} - 2 \leq \mydim{p} \leq 2^{s+1}$.
\item The case where $p$ is a Pfister neighbour. \end{enumerate}
\begin{proof} Let $p$, $q$, $s$ and $k$ be as in the statement of Conjecture \ref{conjmain}, and let 
$$ m=\begin{cases} 2^{s} -2 & \text{if $k \geq 2^{s-1}$} \\
                                                        2^{s-1}  & \text{if $k < 2^{s-1}$}. \end{cases}$$
By Lemma \ref{lemreformulatedconj}, the statement of the conjecture holds for the pair $(p,q)$ if and only if
$$\binom{\frac{\mydim{q}+k}{2}}{l}$$
is even for every $k < l \leq m$. Corollary \ref{corvishik} shows that this holds in both cases (1) and (2). Finally, suppose that $p$ is a Pfister neighbour with ambient Pfister form $\pi$. Since $p$ and $\pi$ are stably birationally equivalent (see \S \ref{subsecstablebirationality}), we have $\windex{q_{F(\pi)}} = \windex{q_{F(p)}}$. To prove that the statement of the conjecture holds in this case, we can therefore replace $p$ by $\pi$ and assume that $p$ is a Pfister form. In particular, we can assume that $\mydim{p} = 2^{s+1}$. The validity of the conjecture in this case therefore follows from its validity in case (2). \end{proof} \end{theorem}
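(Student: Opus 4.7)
The plan is to combine the binomial-parity reformulation of Conjecture~\ref{conjmain} given by Lemma~\ref{lemreformulatedconj} with the Steenrod-operation evenness statements supplied by Corollary~\ref{corbinomials}. For a pair $(p,q)$ with invariants $s$ and $k$ as in the conjecture, I would set
\[ m = \begin{cases} 2^{s}-2 & \text{if } k \geq 2^{s-1}, \\ 2^{s-1} & \text{if } k < 2^{s-1}, \end{cases} \]
so that by Lemma~\ref{lemreformulatedconj} the statement of the conjecture for $(p,q)$ is equivalent to the assertion that $\binom{(\mydim{q}+k)/2}{l}$ is even for every $k<l\le m$. Corollary~\ref{corbinomials} delivers exactly such evenness in the range $k<l\le[(\mydim{p}-1)/2]$. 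So my strategy is to verify, case by case, that $m \le [(\mydim{p}-1)/2]$, and conclude.

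For case (1), $k<2^{s-1}$ forces $m = 2^{s-1}$, and since $\mydim{p}>2^s$ we have $[(\mydim{p}-1)/2]\ge 2^{s-1}=m$. For case (2), the inequality $\mydim{p}\ge 2^{s+1}-2$ yields $[(\mydim{p}-1)/2]\ge 2^s-2$, which dominates $m$ in both branches of its definition; so again the required range is covered. For case (3), where $p$ is a Pfister neighbour with ambient Pfister form $\pi$, I would reduce to case (2): stable birational equivalence of $p$ and $\pi$ (\S\ref{subsecstablebirationality}) forces $\windex{q_{F(p)}}=\windex{q_{F(\pi)}}$, so the invariant $k$ is unchanged upon replacing $p$ by $\pi$; since $\mydim{\pi}=2^{s+1}$, the pair $(\pi,q)$ lies in case (2), and we are done.

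The main obstacle, viewed from the vantage point of this theorem, has in fact already been dispatched upstream: once Lemma~\ref{lemreformulatedconj} and Corollary~\ref{corbinomials} are in hand, the argument is essentially a matching of two explicit integer ranges together with a one-line stable-birational reduction. The nontrivial content is invisible from the final statement and lives in the binomial-parity manipulation behind Lemma~\ref{lemreformulatedconj} (using the standard criterion for oddness of $\binom{x}{y}$) and, more substantially, in the use of Vishik's descent theorem for Steenrod operations (Theorem~\ref{thmmaintool}) to obtain the $F$-rationality of $S^j(l_r)$, and hence the required evenness of binomial coefficients, in Corollary~\ref{corbinomials}.
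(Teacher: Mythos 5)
Your proof is correct and follows essentially the same route as the paper: reduce via Lemma~\ref{lemreformulatedconj} to the parity of binomial coefficients in the range $k < l \le m$, then match that range against the range supplied by the Steenrod-operation argument, and reduce case~(3) to case~(2) via stable birational equivalence of a Pfister neighbour with its ambient Pfister form. The only cosmetic difference is that the paper cites Corollary~\ref{corvishik} where you cite Corollary~\ref{corbinomials}; your citation is the more direct one (Corollary~\ref{corbinomials} is exactly the binomial-coefficient consequence of Corollary~\ref{corvishik} together with Lemma~\ref{lembinomialeven}), and you also spell out explicitly the verification that $m \le [(\mydim{p}-1)/2]$ in each case, which the paper leaves implicit.
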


Further partial results towards Conjecture \ref{conjmain} will be given in the next section, but the case where $k\geq 2^{s-1}$ remains open in general. From the above arguments, it is easy to see that we have the following reformulation of the conjecture as a refinement of \cite[Thm. 3.1]{Vishik2} for quadrics. The details are left to the reader:

\begin{proposition} The following are equivalent:
\begin{enumerate} \item Conjecture \ref{conjmain} is true.
\item Let $p$ be an anisotropic quadratic form of dimension $\geq 2$ over $F$, and let $s$ be the unique non-negative integer such that $2^s < \mydim{p} \leq 2^{s+1}$. Let $Y$ be a $($possibly isotropic$)$ smooth projective $F$-quadric. If $\alpha \in \mathrm{Ch}^m(Y_{\overline{F}})$ is $F(p)$-rational, then $S^j(\alpha)$ is $F$-rational for all $j \geq m - (2^s-2)$.  \end{enumerate} \end{proposition}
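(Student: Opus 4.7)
My plan is to translate both statements into equivalent conditions on the parity of binomial coefficients via Lemmas \ref{lembinomialeven} and \ref{lemreformulatedconj}, and then verify that these conditions coincide. For $(2) \Rightarrow (1)$, I would apply (2) with $Y$ equal to the projective quadric $Q$ of $q$ and $\alpha = l_r$ for $r = \windex{q_{F(p)}} - 1$. Since $r < \windex{q_{F(p)}}$, the cycle $l_r$ is $F(p)$-rational, and it lies in codimension $m = (\mydim{q}+k)/2 - 1$. The $F$-rationality of $S^j(l_r)$ asserted by (2) for $j \geq m - (2^s-2)$ translates, via Lemma \ref{lembinomialeven} and the identity $\mydim{q} - r - 1 = (\mydim{q}+k)/2$, into evenness of $\binom{(\mydim{q}+k)/2}{l}$ for all $l$ with $k < l \leq 2^s - 1$. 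Since the range required by Lemma \ref{lemreformulatedconj}, namely $(k, m^*]$ with $m^* \in \lbrace 2^s-2, 2^{s-1}\rbrace$, is contained in this range, the conjecture follows.

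For $(1) \Rightarrow (2)$, I would first reduce to the case where $Y$ is anisotropic: an isotropic quadric's motive splits off Tate summands corresponding to $h$-classes and maximal $l$-classes, whose Steenrod operations are automatically $F$-rational. Writing $\alpha$ in the basis $\lbrace l_i, h^i \rbrace$ of $\chow{Y_{\overline{F}}}$, the $h$-components contribute $F$-rationally to $S^j(\alpha)$, so we may assume $\alpha = l_r$ with $r = \mydim{Y} - m$, and the $F(p)$-rationality of $l_r$ forces $r < \windex{q'_{F(p)}}$ for the anisotropic form $q'$ defining $Y$. By Lemma \ref{lembinomialeven}, the goal becomes the evenness of $\binom{\mydim{q'}-r-1}{l}$ for $l$ in a range bounded below by $\mydim{q'}-2r-1 \geq k'+1$ (where $k' = \mydim{q'} - 2\windex{q'_{F(p)}}$) and above by $\min(2^s - 1, \mydim{q'}-r-1)$. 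When $r = \windex{q'_{F(p)}} - 1$, the numerator equals $(\mydim{q'}+k')/2$, and the conjecture applied to $(p, q')$ via Lemma \ref{lemreformulatedconj} gives the required evenness in $(k', m^*]$; the remaining range $(m^*, 2^s-1]$ is then filled by a direct Lucas' theorem computation, using the constraint $(\mydim{q'}+k')/2 = a \cdot 2^s + \mu$ with $\mu \leq k' < 2^s$ to check that no 2-adic digit of $(\mydim{q'}+k')/2$ below position $s$ can accommodate the bit pattern of any such $l$.

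The hard part will be the non-extremal case $r < \windex{q'_{F(p)}} - 1$ of $(1) \Rightarrow (2)$. Here the binomial's numerator $\mydim{q'} - r - 1$ strictly exceeds $(\mydim{q'}+k')/2$, so the conjecture does not apply to it directly; however, the lower bound $\mydim{q'} - 2r - 1$ also strictly exceeds $k' + 1$ (by $2(\windex{q'_{F(p)}}-1-r) \geq 2$). Combined with the implication of the conjecture that $\mydim{q'} \geq 2^{s+1} - k'$ whenever $\windex{q'_{F(p)}} > 0$, the required evenness should follow from a case analysis on the 2-adic expansion of $\mydim{q'} - r - 1$ via Lucas' theorem. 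Alternatively, one could try to reduce this case to the extremal one by invoking the conjecture for a suitable auxiliary form $\tau$ realising $\mydim{q'} - r - 1$ as $(\mydim{\tau}+k'')/2$; either approach amounts to the same essentially routine 2-adic verification that the authors deem ``left to the reader''.
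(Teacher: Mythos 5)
Your outline is correct, and it is the natural way to fill in the reduction the paper explicitly ``leaves to the reader.'' The $(2)\Rightarrow(1)$ direction is exactly the chain already present in \S\ref{secproof}: apply (2) with $Y=Q$ and $\alpha = l_r$, $r = \windex{q_{F(p)}}-1$, so that $m = \mydim{Q}-r = \tfrac{\mydim{q}+k}{2}-1$; as $j$ runs over $[\max(1,m-(2^s-2)),\,r]$ the quantity $l = \mydim{q}-r-1-j$ runs over $[k+1,\,2^s-1]$, Lemma \ref{lembinomialeven} converts $F$-rationality of $S^j(l_r)$ into evenness of $\binom{(\mydim{q}+k)/2}{l}$, and Lemma \ref{lemreformulatedconj} finishes. (That $j=0$ is excluded, i.e. $m\geq 2^s-1$, follows from Hoffmann's theorem once $\windex{q_{F(p)}}>0$ with $q$ anisotropic, so there is no clash with $\binom{n}{n}=1$.)

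For $(1)\Rightarrow(2)$ your reduction to anisotropic $Y$ and to $\alpha=l_r$ is standard and fine. What you flag as ``the hard part'' and then declare routine is indeed routine, but it deserves to be written down, and in fact it handles the extremal and non-extremal cases uniformly, making your case split unnecessary. Set $\mathfrak{i}=\windex{q'_{F(p)}}$, $k'=\mydim{q'}-2\mathfrak{i}$, and $t=\mathfrak{i}-1-r\geq 0$. By Lemma \ref{lemreformulatedconj}, the conjecture for $(p,q')$ gives $\tfrac{\mydim{q'}+k'}{2}=a2^s+\mu$ with $a\geq 1$ and $0\leq\mu\leq k'$. The numerator in Lemma \ref{lembinomialeven} is then $\mydim{q'}-r-1=a2^s+\mu+t$, and the relevant range of $l$ is $[k'+2t+1,\,2^s-1]$. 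This range is nonempty only if $t\leq 2^{s-1}-1-k'/2$, in which case $\mu+t\leq k'/2+2^{s-1}-1<2^s$, so the bottom $s$ binary digits of the numerator are precisely those of $\mu+t$. Any $l<2^s$ with $\pi(l)\subseteq\pi(\mu+t)$ satisfies $l\leq\mu+t\leq k'+t<k'+2t+1$, which is below the range; Lemma \ref{lembinomialparity} therefore gives evenness throughout. Taking $t=0$ recovers your ``extremal'' case, so there is no need to invoke Lemma \ref{lemreformulatedconj} and Lucas separately there. With this computation supplied, the proposal is complete and correct.
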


\section{Additional results} \label{secadditionalresults} In this section, we provide further justification for Conjecture \ref{conjmain} beyond Theorem \ref{thmmain}. First, we prove that the conjecture holds under a certain assumption on the splitting pattern of $q$. We then explain how the remaining cases would follow if a related, but rather stronger conjecture of Kahn on the unramified Witt ring of a quadric were true. This approach reveals that, in analogy with Theorem \ref{thmdivisibilityofindices}, Conjecture \ref{conjmain} should extend to a statement at the level of the splitting pattern of $q$. We begin by stating this extension.

\subsection{A refinement of the main conjecture} Let $p$ and $q$ be anisotropic quadratic forms of dimension $\geq 2$ over $F$, and let $k = \mydim{q} - 2\windex{q_{F(p)}}$ (i.e., $k$ is the dimension of the anisotropic part of $q_{F(p)}$). By \cite[\S 5]{Knebusch1}, $k$ is equal to the dimension of $q$ or one of its higher anisotropic kernels. We can therefore propose the following:

\begin{conjecture} \label{conjrefined} In the above situation, let $0 \leq l \leq h(q)$ be such that $\mydim{q_l} = k$. Suppose that $k<2^s$, where $s$ is the unique non-negative integer such that $2^s < \mydim{p} \leq 2^{s+1}$. Then, for each $0 \leq r < l$, there exist integers $a_r,b_r \geq 1$ and $-k \leq \epsilon_r \leq k$ such that
\begin{enumerate} \item $\mydim{q_r} = 2^{s+1}a_r + \epsilon_r$.
\item With one possible exception, either $\witti{r+1}{q} \leq \frac{1}{2}(k + \epsilon_r)$ or $\witti{r+1}{q} = b_r2^{s+1} + \epsilon_r$. The exception is where $\mydim{q_r} = 2^{s+1} - k$, in which case $r=l-1$ and $\witti{r+1}{q} = 2^s - k$.
\end{enumerate} \end{conjecture}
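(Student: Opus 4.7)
The plan is to reduce Conjecture \ref{conjrefined} to Conjecture \ref{conjmain} applied at every level of the Knebusch splitting tower of $q$. First, I would observe that for each $0 \leq r < l$, the form $(q_r)_{F_r(p)}$ is isotropic, since the anisotropic part of $q$ over $F_r(p)$ (which coincides with that of $(q_r)_{F_r(p)}$) has dimension at most $k < \mydim{q_r}$ as $F_r(p) \supseteq F(p)$. Consequently $p_{F_r}$ must be anisotropic, for otherwise $F_r(p)/F_r$ would be purely transcendental by \cite[Prop. 22.9]{EKM} and $q_r$ would remain anisotropic over $F_r(p)$. Setting $k_r := \mydim{q_r} - 2\windex{(q_r)_{F_r(p)}}$, the same comparison yields $k_r \leq k$, and applying Conjecture \ref{conjmain} to the pair $(p, q_r)$ over $F_r$ (covered by Theorem \ref{thmmain} when $k_r < 2^{s-1}$) gives $\mydim{q_r} = a_r 2^{s+1} + \epsilon_r$ with $|\epsilon_r| \leq k$ and $a_r \geq 1$ (since $\mydim{q_r} > k$), establishing condition (1).

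Condition (2) then follows from a numerical dichotomy based on the identity
$$ 2\witti{r+1}{q} = \mydim{q_r} - \mydim{q_{r+1}} = 2^{s+1}(a_r - a_{r+1}) + (\epsilon_r - \epsilon_{r+1}). $$
Writing $c := a_r - a_{r+1} \geq 0$, if $c = 0$ one immediately obtains $\witti{r+1}{q} = (\epsilon_r - \epsilon_{r+1})/2 \leq (\epsilon_r + k)/2$, the first alternative. If $c \geq 1$, matching $\witti{r+1}{q}$ against the target $b_r 2^{s+1} + \epsilon_r$ of the second alternative yields $(c - 2b_r) 2^{s+1} = \epsilon_r + \epsilon_{r+1}$; since $|\epsilon_r + \epsilon_{r+1}| \leq 2k < 2^{s+1}$, this forces $c = 2 b_r$ and $\epsilon_{r+1} = -\epsilon_r$. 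The substantive content of (2) is therefore to exclude configurations in which $c \geq 1$ is odd, or in which $c \geq 2$ is even but $\epsilon_{r+1} \neq -\epsilon_r$, outside of the boundary configuration $\mydim{q_r} = 2^{s+1} - k$ at $r = l - 1$, where the telescoping $\mydim{q_{r+1}} = k$ genuinely produces $c = 1$ and $\witti{r+1}{q} = 2^s - k$.

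To exclude the remaining illegal transitions I would appeal to Karpenko's theorem on the first higher Witt index applied to $q_r$ over $F_r$, which constrains $\witti{1}{q_r} = \witti{r+1}{q}$ to a short list of admissible values in terms of $\mydim{q_r}$, together with Theorem \ref{thmexcellentconnections} applied to the quadric of $q_r$, which fixes the Tate shifts that must occur in its upper motive. Matching these constraints against the pair $(\epsilon_r, \epsilon_{r+1})$ supplied by condition (1) at two consecutive levels should force either $c = 0$, or $c = 2 b_r$ even with $\epsilon_{r+1} = -\epsilon_r$, away from the named boundary. The principal obstacle is twofold: the inductive application of Conjecture \ref{conjmain} at an intermediate level may meet a $k_r$ lying outside the range $k_r < 2^{s-1}$ currently covered by Theorem \ref{thmmain}; and the fine parity analysis required to rule out odd $c$ seems to demand rationality information about Steenrod squares of $F_r(p)$-rational cycles going beyond what a single application of Theorem \ref{thmmaintool} can furnish. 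I expect that the general case would require invoking a positive resolution of Kahn's conjecture on the unramified Witt group of a quadric, alluded to later in this section, which would supply the needed level-by-level rationality uniformly.
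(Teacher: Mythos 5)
The statement in question is labelled a conjecture, and the paper does not claim a full proof: it establishes only the special cases collected in Theorem \ref{thmadditional} (namely $k\le 7$ and $\mydim{q}\le 2^{s+2}+k$). Your proposal, which you also present as incomplete, should therefore be compared against those partial proofs, and it takes a genuinely different route.

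Your plan for assertion (1) --- apply Conjecture \ref{conjmain} at each stage of the Knebusch tower --- is sound as far as it goes, and in fact a small strengthening of your observation holds: since $q_r$ becomes isotropic over $F_r(p)$ for every $r<l$, the extension $F_r(p)/F(p)$ is purely transcendental, so one has $k_r=k$ exactly, not merely $k_r\le k$. Consequently your worry that ``$k_r$ may lie outside the range covered by Theorem~\ref{thmmain}'' is a non-issue when $k<2^{s-1}$, though the argument of course remains conditional on Conjecture~\ref{conjmain} in general. The genuine gap is in assertion (2). Your dichotomy on $c=a_r-a_{r+1}$ correctly shows that \emph{if} the second alternative of (2) holds with $c\ge 1$, then $c=2b_r$ and $\epsilon_{r+1}=-\epsilon_r$, but you have no mechanism to rule out the ``illegal'' transitions (odd $c$, or even $c$ with $\epsilon_{r+1}\neq-\epsilon_r$) away from the boundary case; Karpenko's theorem and Vishik's excellent-connections theorem, invoked in the abstract, do not obviously close this. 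The paper sidesteps this entirely via two mechanisms absent from your sketch. The first is Lemma~\ref{lemdefinedimpliesconj}: if $\anispart{(q_{F(p)})}\simeq\sigma_{F(p)}$ is defined over $F$, one forms $\eta=\anispart{(q\perp-\sigma)}$, which becomes hyperbolic over $F(p)$, applies Theorem~\ref{thmdivisibilityofindices} to $\eta$, and transfers the resulting divisibilities to $q$ by a neighbour/stable-birational-equivalence argument --- a reduction to the $k=0$ case rather than a level-by-level verification. The second is Theorem~\ref{thmSPcondition}, a motivic induction on $l$ hinging on Lemma~\ref{lemmotivicsummand} (the shifted upper motive $U(P)\{\mathfrak{i}-1\}$ is a summand of $M(Q)$) combined with Vishik's Theorem~\ref{thmexcellentconnections}, used much more precisely than in your outline. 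You are right that Kahn's Conjecture~\ref{conjKahn} is the key external input, but it enters through Proposition~\ref{propdefined}, which establishes the ``defined over $F$'' hypothesis needed for Lemma~\ref{lemdefinedimpliesconj}, not through a uniform supply of Steenrod-type rationality as you anticipate. In short: your approach to (1) matches the spirit of the paper, but the perturbation-to-hyperbolicity mechanism that actually delivers (2) is the missing idea.
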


\begin{remark} If $k=0$ (i.e., if $q_{F(p)}$ is hyperbolic), then $l = h(q) - 1$, and this is just Theorem \ref{thmdivisibilityofindices}. In general, the exceptional case of (2) corresponds to the case of Theorem \ref{thmdivisibilityofindices} in which $\mathrm{deg}(q) = s+1$, so that $\witti{h(q)}{q}$ is not divisible by $2^{s+1}$ (being equal to $2^s$). \end{remark} 

Observe that, for $k<2^s$, Conjecture \ref{conjmain} correponds to assertion (1) of Conjecture \ref{conjrefined} in the case where $r=0$ (of course, Conjecture \ref{conjmain} is vacuously true if $k \geq 2^s$). We will prove here the following:

\begin{theorem} \label{thmadditional} Conjecture \ref{conjrefined} $($and hence Conjecture \ref{conjmain}$)$ is true if either
\begin{enumerate}\item $ k \leq 7$. 
\item $\mydim{q} \leq 2^{s+2} + k$. \end{enumerate} \end{theorem}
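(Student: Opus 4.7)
My plan is to prove Theorem~\ref{thmadditional} by iterating Corollary~\ref{corbinomials} along the Knebusch splitting tower of $q$ and converting the resulting parity information into the statements of Conjecture~\ref{conjrefined} via Lemma~\ref{lemreformulatedconj} and Karpenko's theorem on first higher Witt indices. The common first step is to check that, for each $0\leq r<l$, the pair $(p,q_r)$ satisfies the hypotheses of Conjecture~\ref{conjmain} over $F_r$ with anisotropic-kernel invariant $k_r:=\mydim{\anispart{((q_r)_{F_r(p)})}}\leq k$. One has to argue that $p_{F_r}$ remains anisotropic (otherwise $F_r(p)/F_r$ would be purely transcendental, a scenario that turns out to be inconsistent with Corollary~\ref{corbinomials} applied at earlier levels). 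Applying Corollary~\ref{corbinomials} at level~$r$ then yields parity constraints on $\binom{(\mydim{q_r}+k_r)/2}{l'}$ for $k_r<l'\leq[(\mydim{p}-1)/2]$, with an extra value admitted when $p$ remains unsplit over $F_{r+1}$. Feeding these into Lemma~\ref{lemreformulatedconj} yields assertion~(1) of Conjecture~\ref{conjrefined} (possibly with the stronger bound $|\epsilon_r|\leq k_r\leq k$).

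For part~(1), the case $k<2^{s-1}$ is already covered by Theorem~\ref{thmmain} at every level for assertion~(1); to deduce the refined Witt-index statement~(2), I would combine the consecutive-level formulas $\mydim{q_r}=2^{s+1}a_r+\epsilon_r$ and $\mydim{q_{r+1}}=2^{s+1}a_{r+1}+\epsilon_{r+1}$ with Karpenko's theorem on admissible values of $\witti{1}{q_r}=\witti{r+1}{q}$, which together force the claimed dichotomy (either $\witti{r+1}{q}$ is ``small'' or $\equiv\epsilon_r\pmod{2^{s+1}}$). The residual cases form a short finite list, essentially $(s,k)\in\{(2,2),(3,4),(3,5),(3,6)\}$, for which Corollary~\ref{corbinomials} alone is insufficient. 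I would close the missing range of binomial parities by appealing to (and proving, where needed, in these low-dimensional cases) enough of Kahn's conjecture on the unramified Witt group of the quadric $\{p=0\}$ to produce the additional $F$-rational elements of $\chow{Q_{\overline{F}}}$ required.

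For part~(2), the bound $\mydim{q}\leq 2^{s+2}+k$ transfers to $\mydim{q_r}\leq 2^{s+2}+k$ at every level. Setting $x_r=(\mydim{q_r}+k_r)/2\leq 2^{s+1}+k<2^{s+1}+2^s$, the $2$-adic expansion of $x_r$ involves only bits at positions $\leq s-1$ (encoding some $\mu<2^s$) together with at most one of the bits at positions~$s$ and~$s+1$. Since those high bits contribute nothing to $\binom{x_r}{l'}\pmod 2$ for $l'<2^s$ (Lemma~\ref{lembinomialparity}), the problem reduces to checking $\binom{\mu}{l'}$ for $k_r<l'\leq 2^s-2$; the constraints from Corollary~\ref{corbinomials} for $l'\leq[(\mydim{p}-1)/2]$ cover most of this range, and the narrow leftover interval can be ruled out directly using the Karpenko--Merkurjev theorem to pin down the admissible values of~$\mu$. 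Assertion~(2) then follows as in part~(1).

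The central obstacle in both parts is bridging the gap between the binomial range $l'\leq[(\mydim{p}-1)/2]$ supplied by Corollary~\ref{corbinomials} and the range $l'\leq 2^s-2$ required by Lemma~\ref{lemreformulatedconj} whenever $k_r\geq 2^{s-1}$. Closing this gap unconditionally would essentially resolve Conjecture~\ref{conjmain} in its hardest regime; in part~(1) it is bridged by borrowing from Kahn's conjecture in small dimensions (which is what restricts the bound to $k\leq 7$), and in part~(2) by the smallness of $\mydim{q}$, which compresses the $2$-adic data enough that the high-$l'$ constraints become automatic or checkable by hand.
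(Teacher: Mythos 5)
Your proposal diverges substantially from the paper's proof and contains several gaps that would be hard to close along the lines you sketch.

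First, a scope issue: Theorem~\ref{thmadditional} asserts Conjecture~\ref{conjrefined}, not merely Conjecture~\ref{conjmain}. Theorem~\ref{thmmain} only delivers assertion~(1) of Conjecture~\ref{conjrefined} at level $r=0$; it says nothing about the dichotomy for the higher Witt indices in assertion~(2). So ``the case $k<2^{s-1}$ is already covered by Theorem~\ref{thmmain}'' is not true for the statement actually being proved, and your residual list $(s,k)\in\{(2,2),(3,4),(3,5),(3,6)\}$ only covers what is missing for Conjecture~\ref{conjmain}.

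Second, the step you describe for deducing assertion~(2) --- combining $\mydim{q_r}=2^{s+1}a_r+\epsilon_r$ and $\mydim{q_{r+1}}=2^{s+1}a_{r+1}+\epsilon_{r+1}$ with Karpenko's theorem on $\witti{1}{q_r}$ --- does not close the argument. Karpenko's theorem does handle the regime $\witti{r+1}{q}>2^s$ (there $2^{s+1}\mid\mydim{q_r}-\witti{r+1}{q}$ forces $\witti{r+1}{q}\equiv\epsilon_r\pmod{2^{s+1}}$), but it gives no contradiction in the regime $\tfrac{1}{2}(k+\epsilon_r)<\witti{r+1}{q}\leq 2^s$. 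In the paper, this intermediate regime is ruled out by a genuinely motivic argument: in Theorem~\ref{thmSPcondition} it is Lemma~\ref{lemmotivicsummand} (a direct-summand statement produced via \cite[Thm. 4.15]{Vishik1} and Izhboldin's incompressibility theorem) together with Vishik's excellent-connections Theorem~\ref{thmexcellentconnections}; in the remaining cases it is the ``defined over $F$'' route of Proposition~\ref{propdefined} and Lemma~\ref{lemdefinedimpliesconj}. Your proposal uses none of these inputs, and simple arithmetic with Karpenko's bound cannot replace them.

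Third, your use of Kahn's conjecture is aimed at the wrong object. You propose applying it to the unramified Witt group of the quadric $\{p=0\}$ to manufacture extra $F$-rational cycles on $Q$, but the paper's Proposition~\ref{propdefined} applies Kahn's conjecture to the higher anisotropic kernels $q_{l-1}, q_{l-2}, \ldots$ of $q$ (with $\tau$ of dimension at most $k$), in order to descend $\anispart{(q_{F(p)})}$ step by step to $F$. This is a different mechanism, and it is crucial: it is the combination of Lemma~\ref{lemdimatprevious} (which shows $\mydim{q_{l-1}}=2^N-k$ with $N\geq s+1$), Theorem~\ref{thmSPcondition} (handling $N=s+1$), and the descent via Kahn's conjecture for $N\geq s+2$ that makes the bound $k\leq 7$ match the known cases of Conjecture~\ref{conjKahn} ($\mydim{\tau}\leq 7$). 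Finally, for part~(2), the paper's argument is essentially two lines once Lemma~\ref{lemdimatprevious} and Theorem~\ref{thmSPcondition} are available ($\mydim{q_{l-1}}$ must equal $2^{s+1}-k$ or $2^{s+2}-k$, and the second case is trivial); by contrast your proposed $2$-adic/Karpenko--Merkurjev analysis is both more elaborate and not worked out, and the role you assign to Karpenko--Merkurjev (pinning down $\mu$) is not something that theorem does.
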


The first case, in particular, implies that both our conjectures hold if $\mydim{p} \leq 16$. The proof of Theorem \ref{thmadditional} is given in \S \ref{subsecadditional} below. Case (2) is treated using a motivic argument which in fact gives more (Theorem \ref{thmSPcondition}). Case (1), on the other hand, is treated by relating Conjecture \ref{conjrefined} to Theorem \ref{thmdivisibilityofindices} and the aforementioned conjecture of Kahn. We we now explain, this approach more clearly illustrates the philosophy underlying this paper.

\subsection{A conjecture of Kahn} \label{subsecKahn} Let $\varphi$ be an anisotropic quadratic form of dimension $\geq 2$ over $F$. Recall that the \emph{unramified Witt ring} of the projective quadric $\lbrace \varphi = 0 \rbrace$, denoted $W_{\mathrm{nr}}(F(\varphi))$, is defined as the subring of $W(F(\varphi))$ consisting of those classes of anisotropic quadratic forms which have no non-trivial (second) residues at the codimension-1 points of the projective quadric $\lbrace \varphi=0 \rbrace$ (see \cite{Kahn1}). The image of the canonical scalar extension map $W(F) \rightarrow W(F(\varphi))$ trivially lies in $W_{\mathrm{nr}}(F(\varphi))$, but equality need not hold in general (see, e.g., \cite[Cor. 10]{KahnRostSujatha}). Nevertheless, Kahn has made the following strong conjecture concerning the ``low-dimensional'' part of $W_{\mathrm{nr}}(F(\varphi))$:

\begin{conjecture}[{Kahn, \cite[Conj. 1]{Kahn1}}] \label{conjKahn} Let $\varphi$ be an anisotropic quadratic form over $F$ and let $[\tau] \in W_{nr}(F(\varphi))$. If $\mathrm{dim}(\tau) < \frac{1}{2} \mathrm{dim}(\varphi)$, then $\tau$ is defined over $F$, i.e., there exists a quadratic form $\sigma$ over $F$ such that $\tau \simeq \sigma_{F(\varphi)}$. \end{conjecture}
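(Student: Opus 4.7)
Kahn's Conjecture \ref{conjKahn} is a long-standing open problem, so what follows is a plausible line of attack rather than a road-map to a complete proof. The approach I would take rests on the Rost--Schmid complex (Gersten-type resolution) of the Witt sheaf on the smooth projective quadric $X = \lbrace \varphi = 0 \rbrace$. By definition, the condition $[\tau] \in W_{\mathrm{nr}}(F(\varphi))$ says that $\tau$ has trivial second residue at every codimension-$1$ point of $X$, so $[\tau]$ represents a global unramified section over $X$; the conjecture then amounts to asserting that the canonical map $W(F) \to W_{\mathrm{nr}}(F(\varphi))$ is surjective in the ``low-dimensional'' range. The plan is to locate the cohomological obstruction to this descent and force it to vanish using the dimension bound on $\tau$.

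First, I would filter both sides using the fundamental filtration $W(F) \supseteq I(F) \supseteq I^2(F) \supseteq \cdots$ and study the associated graded pieces. By the Milnor--Voevodsky theorem, $I^n/I^{n+1}$ identifies with mod-$2$ Galois cohomology, and on $X$ the associated Zariski sheaves reduce to mod-$2$ Chow groups via the Rost cycle module formalism. The dimension bound $\mathrm{dim}(\tau) < \frac{1}{2}\mathrm{dim}(\varphi)$, combined with the Arason--Pfister Hauptsatz, confines the class of $\tau$ modulo $I^n(F(\varphi))$ to controlled cohomological degree, which one would then attempt to descend layer by layer along this filtration.

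Second, and more in the spirit of the present paper, I would reformulate the problem Chow-theoretically: the unramified condition translates into an $F(\varphi)$-rationality assertion for certain cycles on $X$ and on the quadric associated to $\tau$. Theorem \ref{thmmaintool} then produces $F$-rational cycles from these $F(\varphi)$-rational ones via Steenrod operations, and the dimension bound on $\tau$ should ensure that the resulting rational classes are rich enough to detect an $F$-form $\sigma$ with $\sigma_{F(\varphi)} \simeq \tau$. An induction along the Knebusch splitting tower of $\varphi$ would reduce the problem, at the bottom level, to the case where $\varphi$ is essentially a Pfister form, where the Cassels--Pfister subform theorem (Proposition \ref{propCP}) provides an explicit descent.

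The main obstacle is controlling the higher cohomology of the Witt sheaf on anisotropic quadrics, that is, the obstruction group sitting between $W(F)$ and $W_{\mathrm{nr}}(F(\varphi))$. Explicit examples (see \cite{KahnRostSujatha}) already show that $W_{\mathrm{nr}}(F(\varphi))/W(F)$ can be non-trivial in general, so the bound $\mathrm{dim}(\tau)<\frac{1}{2}\mathrm{dim}(\varphi)$ is doing genuine work; pinpointing precisely which cohomological class this bound annihilates, and proving that it does so, is the heart of the conjecture and appears to lie well beyond the reach of current techniques without a substantial new motivic input specific to quadrics.
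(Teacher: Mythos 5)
This statement is a \emph{conjecture}, not a theorem: the paper quotes it verbatim from Kahn's 1995 paper \cite{Kahn1} and has no proof of it to compare against. The only thing the paper does with Conjecture~\ref{conjKahn} is (a) show, in Proposition~\ref{propdefined}, how its truth would imply that $\anispart{(q_{F(p)})}$ is defined over $F$ in the situation of Conjecture~\ref{conjrefined}, and (b) invoke the \emph{known} low-dimensional cases of it (namely $\mathrm{dim}(\tau) \le 5$ due to Kahn \cite[Thm.~2]{Kahn1}, $\mathrm{dim}(\tau) = 6$ due to Laghribi \cite{Laghribi1}, and $\mathrm{dim}(\tau) = 7$ due to Izhboldin--Vishik \cite[Thm.~3.9]{IzhboldinVishik}) in the proof of Theorem~\ref{thmadditional}. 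You correctly recognize that this is a long-standing open problem and explicitly refrain from claiming a proof, so there is no gap to flag in the usual sense --- but nor is there a proof to evaluate.

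A few remarks on the speculative sketch itself, for what they are worth. The Gersten/Rost--Schmid framing and the observation that $W_{\mathrm{nr}}(F(\varphi))/W(F)$ can be nonzero (cf.\ \cite{KahnRostSujatha}) are both on target, and filtering by $I^n$ is indeed how the proven cases $\mathrm{dim}(\tau)\le 7$ are obtained. However, the middle step of your plan --- passing from $F$-rational Chow classes produced by Theorem~\ref{thmmaintool} to an actual $F$-form $\sigma$ with $\sigma_{F(\varphi)}\simeq\tau$ --- is precisely where the difficulty is concentrated and is not filled in; Steenrod-operation rationality statements of Vishik's type constrain splitting patterns and motivic decompositions but do not by themselves produce descent of the form. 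You acknowledge this, and the honest conclusion that a complete argument ``lies well beyond the reach of current techniques'' matches the status of the problem in the literature.
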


If true, this has the following application to Conjecture \ref{conjrefined}:

\begin{proposition} \label{propdefined} Assume in the situation of Conjecture \ref{conjrefined} that $\mathrm{dim}(q_{l-1}) > 2k$. Then, $\anispart{(q_{F(p)})}$ is defined over $F$ provided that Conjecture \ref{conjKahn} holds whenever $\mathrm{dim}(\tau) \leq k$ and $\mathrm{dim}(\varphi) \geq \mathrm{dim}(q_{l-1})$.
\begin{proof} Let $(F_r)$ be the Knebusch splitting tower of $q$. The form $q_l$ is evidently unramified at the codimension-1 points of the quadric $\lbrace q_{l-1} = 0 \rbrace$. Since $\mathrm{dim}(q_l) = k$ and $\mathrm{dim}(q_{l-1}) > 2k$, our hypothesis implies that $q_l \simeq \tau_{F_l}$ for some form $\tau$ over $F_{l-1}$. If $l>1$, then $\tau$ is unramified at the codimension-1 points of $X = \lbrace q_{l-2} = 0 \rbrace$. Indeed, since $q_l \simeq \tau_{F_l}$, Corollary \ref{corCP} implies that $[\tau \perp -q_{l-1}] \in I^n(F_{l-1})$, where $2^n \geq \mathrm{dim}(q_{l-1}) > 2k$. Thus, if $x \in X^{(1)}$, and $\pi$ is a uniformizer for $\mathcal{O}_{X,x}$, we have (using \cite[Lem. 19.14]{EKM})
$$ \partial_{x,\pi}([\tau]) = \partial_{x,\pi}([\tau \perp - q_{l-1}]) \in \partial_{x,\pi}\big(I^n(F_{l-1})\big) \subseteq I^{n-1}(F_{l-2}(x)).$$
But $\mathrm{dim}(\tau) = k$, and since $2^{n-1}>k$, the Arason-Pfister Hauptsatz (\cite{ArasonPfister}) then implies that $\partial_{x,\pi}([\tau]) = 0$. Again, our hypothesis now gives that $q_l$ is defined over $F_{l-2}$. Continuing in this way, we see that $q_l$ is defined over $F$, say $q_l \simeq \sigma_{F_l}$. We claim that $\anispart{(q_{F(p)})} \simeq \sigma_{F(p)}$. Since $\mathrm{dim}(\sigma) = k = \mathrm{dim}(\anispart{(q_{F(p)})})$, it suffices to show that $(q \perp - \sigma)_{F(p)}$ is hyperbolic. But $(q \perp - \sigma)_{F_l(p)}$ is hyperbolic, and so, by \cite[Lem. 7.15]{EKM}, it only remains to observe that $F_l(p)$ is a purely transcendental extension of $F(p)$. But if $r<l$, then the form $q_r$ becomes isotropic over $F_r(p)$ by the very definition of $k$. Hence $F_{r+1}(p) = F_r(p)(q_r)$ is a purely transcendental extension of $F_r(p)$, and so the claim follows by an easy induction. \end{proof} \end{proposition}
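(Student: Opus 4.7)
The plan is to produce a form $\sigma$ over $F$ satisfying $\sigma_{F_l} \simeq q_l$ by descending $q_l$ down the Knebusch tower $F = F_0 \subset F_1 \subset \cdots \subset F_l$ step by step using the assumed instance of Kahn's conjecture, and then to identify $\sigma_{F(p)}$ with $\anispart{(q_{F(p)})}$ via a purely-transcendental extension argument. The first step, from $F_l$ to $F_{l-1}$, is almost immediate: since $q_l = \anispart{(q_{l-1})_{F_l}}$, one has $[q_l] = [(q_{l-1})_{F_l}]$ in $W(F_l)$, and this class has trivial second residues at every codimension-$1$ point of the quadric $\{q_{l-1}=0\}$ (residues of a form pulled back from $F_{l-1}$ vanish). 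Combined with $\dim(q_l) = k < \tfrac{1}{2}\dim(q_{l-1})$, the assumed case of Conjecture \ref{conjKahn} yields a form $\tau$ over $F_{l-1}$ with $\tau_{F_l} \simeq q_l$.

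For the next descent (assuming $l \geq 2$), one verifies that $[\tau]$ is itself unramified on the quadric $\{q_{l-2}=0\}$. Since $[q_{l-1}] = [(q_{l-2})_{F_{l-1}}]$ has trivial second residues, one has $\partial_{x,\pi}([\tau]) = \partial_{x,\pi}([\tau \perp -q_{l-1}])$ at each codimension-$1$ point $x$. But $[\tau \perp -q_{l-1}]$ becomes zero over $F_l = F_{l-1}(q_{l-1})$, so Corollary \ref{corCP} forces this class to lie in $I^n(F_{l-1})$ for $n$ satisfying $2^n \geq \dim(q_{l-1}) > 2k$. The residue map then lands in $I^{n-1}(F_{l-2}(x))$; since $\dim(\tau) = k < 2^{n-1}$, the Arason--Pfister Hauptsatz forces it to vanish. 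A further application of Kahn's conjecture (permissible because $\dim(q_{l-2}) \geq \dim(q_{l-1}) > 2k$) descends $\tau$ to $F_{l-2}$. Iterating this procedure yields $\sigma$ over $F$ with $\sigma_{F_l} \simeq q_l$.

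To conclude, $(q \perp -\sigma)_{F_l}$ is hyperbolic by construction, hence so is $(q \perp -\sigma)_{F_l(p)}$. Because each $q_r$ with $r<l$ becomes isotropic over $F_r(p)$ by the definition of $l$, each extension $F_{r+1}(p) = F_r(p)(q_r)$ is purely transcendental (\cite[Prop. 22.9]{EKM}), so $F_l(p)/F(p)$ is purely transcendental; then \cite[Lem. 7.15]{EKM} forces $(q \perp -\sigma)_{F(p)}$ to be hyperbolic, and the equality $\dim(\sigma) = k = \dim(\anispart{(q_{F(p)})})$ yields $\anispart{(q_{F(p)})} \simeq \sigma_{F(p)}$. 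The main technical obstacle is the residue computation establishing unramifiedness at each level of the descent---a combination of Corollary \ref{corCP} (to ensure high $I^n$-membership of the relevant Witt class), the Milnor residue sequence (to drop the filtration degree by one), and the Arason--Pfister Hauptsatz (to force vanishing from the dimension bound $k < 2^{n-1}$). Iterating this computation at each successive level of the tower requires some care in tracking the degrees of the relevant Witt classes as one descends.
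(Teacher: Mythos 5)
Your argument is correct and follows essentially the same route as the paper's proof: descending $q_l$ step by step down the Knebusch tower via the assumed instances of Kahn's conjecture, verifying unramifiedness at each stage through Corollary \ref{corCP}, the second residue maps, and the Arason--Pfister Hauptsatz, and then concluding with the observation that $F_l(p)/F(p)$ is purely transcendental.
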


If one assumes another related conjecture of Vishik, then one can go a bit further here (see Remark \ref{remsfinal} (2)). In any case, the philosophy underlying Conjecture \ref{conjrefined} is now clear: If $q_{F(p)}$ is ``nearly hyperbolic'', then its anisotropic part should typically be defined over $F$, and the necessary conclusions can be made using Theorem \ref{thmdivisibilityofindices}. In more details:

\begin{lemma} \label{lemdefinedimpliesconj} Conjecture \ref{conjrefined} is true in the case where $\anispart{(q_{F(p)})}$ is defined over $F$. 
\begin{proof} By an easy induction on $l$, it suffices to show that if $l>0$, then $\mathrm{dim}(q)$ and $\witti{1}{q}$ have the prescribed form. Assume that $\anispart{(q_{F(p)})} \simeq \sigma_{F(p)}$ for some form $\sigma$ over $F$, and let $\eta = \anispart{(q \perp - \sigma)}$. Since both $q$ and $\sigma$ are anisotropic, there exists an integer $0 \leq \lambda \leq \mathrm{dim}(\sigma) = k$, and codimension-$\lambda$ subforms $\widetilde{q} \subset q$ and $\widetilde{\sigma} \subset \sigma$ such that $\eta \simeq \widetilde{q} \perp -\widetilde{\sigma}$. In particular, setting $\epsilon = 2\lambda - k$, we have $\mathrm{dim}(q) = \mathrm{dim}(\eta) + \epsilon$. Now, by construction, $\eta$ becomes hyperbolic over $F(p)$. Thus, by Corollary \ref{corWittkernel}, we have $\mathrm{dim}(\eta) = 2^{s+1}a$ for some $a \geq 1$ (and also $\mathrm{deg}(\eta) \geq s+1$). We therefore have $\mathrm{dim}(q) = 2^{s+1}a + \epsilon$. As for the assertion regarding $\witti{1}{q}$, we may assume that $\witti{1}{q} > (k+\epsilon)/2 = \lambda$, so that $\widetilde{q}$ is a neighbour of $q$ in the sense of \S \ref{subsecstablebirationality} above. This implies, in particular, that $q$ and $\widetilde{q}$ are stably birationally equivalent. If $\eta$ is similar to an $(s+1)$-fold Pfister form, then the Cassels-Pfister subform theorem implies that $q$ is a subform of $\eta$, whence $\eta \simeq q \perp -\sigma$ by Witt cancellation. Since $\mathrm{dim}(\sigma) = k < 2^s$, $q$ is then a neighbour of $\eta$, and so $\witti{1}{q} = 2^s - k$ by \cite[Cor. 4.9]{Vishik1}. Otherwise, Theorem \ref{thmdivisibilityofindices} implies that $\witti{1}{\eta} = 2^{s+1}b$ for some $b \geq 1$. Again, since $\mathrm{dim}(\widetilde{\sigma}) < 2^s$, it follows that $\widetilde{q}$ is a neighbour of $\eta$. As $q$ is a neighbour of $\widetilde{q}$, we see that $q$ and $\eta$ are stably birationally equivalent, and so $\witti{1}{q} = \witti{1}{\eta} + \mathrm{dim}(q) - \mathrm{dim}(\eta) = 2^{s+1}b + \epsilon$ by another application of \cite[Cor. 4.9]{Vishik1}. This proves the lemma. \end{proof} \end{lemma}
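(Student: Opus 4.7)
The plan is to induct on $l$, with the base case $l=0$ being vacuous and the inductive step reducing to an analysis at $r=0$. For the reduction, pass to $F_1 = F(q)$: since $\anispart{(q_{F(p)})} \simeq \sigma_{F(p)}$, the anisotropic kernel $\anispart{(q_{1,F_1(p)})}$ coincides with $\anispart{(\sigma_{F_1(p)})}$ and is therefore defined over $F_1$ (by any form Witt-equivalent to $\sigma_{F_1}$). Since $\mydim{(q_1)_{l-1}} = \mydim{q_l} = k$, the inductive hypothesis applied to $q_1$ over $F_1$ yields the assertions for the indices $r=1,\ldots,l-1$. So it suffices to verify (1) and (2) for $r=0$.

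For $r=0$, the key construction is $\eta := \anispart{(q \perp -\sigma)}$, which is hyperbolic over $F(p)$ by the hypothesis on $\sigma$. Because $q$ and $\sigma$ are both anisotropic, Witt cancellation yields a common subform of some dimension $\lambda$ with $0 \leq \lambda \leq k$, giving decompositions $q \simeq \widetilde{q} \perp \tau$, $\sigma \simeq \widetilde{\sigma} \perp \tau$ and the isometry $\eta \simeq \widetilde{q} \perp -\widetilde{\sigma}$. Setting $\epsilon_0 = 2\lambda - k$, one then has $\mydim{q} = \mydim{\eta} + \epsilon_0$ with $-k \leq \epsilon_0 \leq k$. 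Applying Corollary \ref{corWittkernel} to $\eta$ yields $\mydim{\eta} = 2^{s+1}a_0$ for some $a_0 \geq 1$ and moreover $\mathrm{deg}(\eta) \geq s+1$. This settles (1).

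For (2), we may assume $\witti{1}{q} > \lambda = (k+\epsilon_0)/2$, so that $\widetilde{q}$ (of codimension $\lambda$ in $q$) is a neighbour of $q$ in the sense of \S \ref{subsecstablebirationality}; hence $q$ and $\widetilde{q}$ are stably birationally equivalent. We split into two cases depending on whether $\eta$ is similar to an $(s+1)$-fold Pfister form. If it is, then $\witti{1}{\eta} = 2^s > k - \lambda = \mydim{\widetilde{\sigma}}$, so $\widetilde{q}$ is a neighbour of $\eta$ as well; consequently $q \sim_{sb} \eta$, and, $\eta$ being Pfister, we deduce that $\eta_{F(q)}$ is hyperbolic. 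Proposition \ref{propCP} then realizes $q$ as a subform of $\eta$ (up to scalar), after which Witt cancellation against the Witt-theoretic identity $\eta \equiv q - \sigma$ forces the complementary form to be $\cong -\sigma$; this pins down $\epsilon_0 = -k$, and the Pfister-neighbour computation gives $\witti{1}{q} = \mydim{q} - 2^s = 2^s - k$, which is the exceptional case. If, on the other hand, $\eta$ is not similar to a Pfister form, then $h(\eta) \geq 2$ and Theorem \ref{thmdivisibilityofindices} gives $\witti{1}{\eta} = 2^{s+1}b$ for some $b \geq 1$; in particular $\witti{1}{\eta} > k \geq \mydim{\widetilde{\sigma}}$, so $\widetilde{q}$ is a neighbour of $\eta$, and transitivity of stable birational equivalence gives $q \sim_{sb} \eta$. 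The formula of Vishik (\cite[Cor. 4.9]{Vishik1}) relating first Witt indices of stably birationally equivalent forms then yields $\witti{1}{q} = \witti{1}{\eta} + \epsilon_0 = 2^{s+1}b + \epsilon_0$. The main delicacy here is the second case: one must ensure both that $\widetilde{q}$ genuinely qualifies as a neighbour of $\eta$ (which is precisely why Theorem \ref{thmdivisibilityofindices}, rather than merely Corollary \ref{corWittkernel}, is needed) and that the comparison of first Witt indices under stable birational equivalence produces exactly the additive shift by $\epsilon_0$.
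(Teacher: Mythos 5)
Your proposal is correct and follows essentially the same line as the paper's proof: the same induction on $l$, the same form $\eta = \anispart{(q \perp -\sigma)}$, the same decomposition $\eta \simeq \widetilde{q} \perp -\widetilde{\sigma}$, and the same case split on whether $\eta$ is similar to an $(s+1)$-fold Pfister form, using Corollary \ref{corWittkernel}, Theorem \ref{thmdivisibilityofindices}, and the Witt-index formula for stably birationally equivalent forms. You spell out a couple of steps the paper leaves implicit (notably that $q \sim_{sb} \eta$ is what makes Cassels--Pfister applicable in the Pfister case, and the reduction to $F_1 = F(q)$), but the underlying argument is the same.
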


\subsection{Proof of Theorem \ref{thmadditional}} \label{subsecadditional} For the remainder of the paper, we fix the following notation:

\begin{itemize} \item $q$ and $p$ are anisotropic quadratic forms of dimension $\geq 2$ over $F$.
\item $Q$ and $P$ denote the (smooth) projective quadrics defined by the vanishing of $q$ and $p$, respectively.
\item $s$ is the unique non-negative integer such that $2^s < \mathrm{dim}(p) \leq 2^{s+1}$.
\item $k := \mydim{q} - 2\windex{q_{F(p)}}$ (i.e., $k$ is the dimension of the anisotropic part of $q_{F(p)}$). 
\item $l$ is the unique integer in $[0,h(q)]$ such that $\mathrm{dim}(q_l) = k$. 
\item $\mathfrak{i} := \windex{q_{F(p)}}$. \end{itemize}

We now proceed with the proof of Theorem \ref{thmadditional}. We begin by showing that Conjecture \ref{conjrefined} is true provided that $2^{s+1} - k$ lies in the splitting pattern of $q$. To make this more precise, we note the following:

\begin{lemma} \label{lemdimatprevious} Assume, in the situation of Conjecture \ref{conjrefined}, that $l\geq 1$ $($i.e, that $q_{F(p)}$ is isotropic$)$. Then $\mathrm{dim}(q_{l-1}) = 2^N - k$ for some integer $N \geq s+1$. 
\begin{proof} Let $(F_r)$ be the Knebusch splitting pattern of $q$. Since $q_{l-1}$ becomes isotropic over $F_{l-1}(p)$, and since $\mathrm{dim}(p) > 2^s$, Hoffmann's Separation theorem (\cite[Thm. 1]{Hoffmann1}) implies that $\mathrm{dim}(q_{l-1}) > 2^s$. In particular, $\mathrm{dim}(q_{l-1}) = 2^N - m$ for some integers $N \geq s+1$ and $0 \leq m < 2^{N-1}$. By definition, we then have $\witti{1}{q_{l-1}} = \frac{1}{2}(2^N - m -k)$, so that
$$ \mathrm{dim}(q_{l-1}) - \witti{1}{q_{l-1}} = \frac{1}{2}(2^N - m + k) > 2^{N-2} \hspace{.5cm} (\text{remember that } k<2^s). $$
By Karpenko's theorem on the possible values of the first Witt index (\cite{Karpenko2}), it follows that $2^{N-1}$ divides $m-k$. Since both both $m$ and $k$ are strictly less than $2^{N-1}$, this implies that $m=k$, and so $\mathrm{dim}(q_{l-1}) = 2^N - k$.
\end{proof} \end{lemma}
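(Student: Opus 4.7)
The plan is to combine Hoffmann's Separation theorem, applied over $F_{l-1}$ to the pair $(p,q_{l-1})$, with Karpenko's theorem on the first higher Witt index. The Separation theorem will pin down the lower bound $\mydim{q_{l-1}} > 2^s$; Karpenko's theorem will then force the precise shape $\mydim{q_{l-1}} = 2^N - k$.

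First I would verify that $(q_{l-1})_{F_{l-1}(p)}$ is isotropic, which is what is needed to invoke Hoffmann's theorem over $F_{l-1}$. Since the dimensions in a Knebusch splitting pattern are strictly decreasing and $\mydim{q_l}=k$, we have $\mydim{q_{l-1}}>k$. On the other hand, $F_{l-1}(p)$ is a scalar extension of both $F_{l-1}$ and $F(p)$ (the latter via the natural dominant projection $P_{F_{l-1}}\to P$), so
\[
\mydim{\anispart{((q_{l-1})_{F_{l-1}(p)})}} \;=\; \mydim{\anispart{(q_{F_{l-1}(p)})}} \;\leq\; \mydim{\anispart{(q_{F(p)})}} \;=\; k.
\]
Since $k<\mydim{q_{l-1}}$, the form $(q_{l-1})_{F_{l-1}(p)}$ is therefore isotropic. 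Together with $\mydim{p}>2^s$, Hoffmann's Separation theorem then gives $\mydim{q_{l-1}}>2^s$, and we may write
\[
\mydim{q_{l-1}} \;=\; 2^N - m, \qquad N \geq s+1, \qquad 0 \leq m < 2^{N-1}.
\]

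It remains to show that $m=k$. By the definition of $l$, $\witti{1}{q_{l-1}} = (\mydim{q_{l-1}} - k)/2 = (2^N - m - k)/2$, and so
\[
\mydim{q_{l-1}} - \witti{1}{q_{l-1}} \;=\; \frac{2^N - m + k}{2} \;>\; 2^{N-2},
\]
the inequality using $k\geq 0$ and $m<2^{N-1}$. Karpenko's theorem on the first higher Witt index then forces $\mydim{q_{l-1}} - \witti{1}{q_{l-1}}$ to be divisible by $2^{N-1}$; equivalently, $2^{N-1}\mid m-k$. Since $0\leq m,k < 2^{N-1}$, this yields $m=k$, and hence $\mydim{q_{l-1}} = 2^N - k$ with $N\geq s+1$, as required.

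The only step demanding a moment of care is the invocation of Karpenko's theorem in the final paragraph: one must extract the qualitative statement that, for an anisotropic form $\varphi$ with $\mydim{\varphi}=2^N-m$ and $0\leq m<2^{N-1}$, any value of $\mydim{\varphi}-\witti{1}{\varphi}$ exceeding $2^{N-2}$ must be divisible by $2^{N-1}$. This is a standard consequence of Karpenko's result and poses no real obstacle; everything else in the argument is essentially bookkeeping, and the proof is quite short once the right combination of inputs is lined up.
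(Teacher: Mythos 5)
Your argument follows the paper's proof exactly: use Hoffmann's separation to place $\mydim{q_{l-1}}$ above $2^s$, write $\mydim{q_{l-1}} = 2^N - m$ with $0 \leq m < 2^{N-1}$, express $\witti{1}{q_{l-1}}$ in terms of $N,m,k$, and then invoke Karpenko's theorem on the first higher Witt index to force $m=k$. The explicit verification that $(q_{l-1})_{F_{l-1}(p)}$ is isotropic is implicit in the paper, and spelling it out as you do is a reasonable addition.

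One point deserves a flag, though. The ``qualitative statement'' you formulate in your final paragraph --- that for any anisotropic $\varphi$ with $\mydim{\varphi}=2^N-m$, $0\leq m<2^{N-1}$, any value of $\mydim{\varphi}-\witti{1}{\varphi}$ exceeding $2^{N-2}$ must be divisible by $2^{N-1}$ --- is \emph{not} a consequence of Karpenko's theorem, and is in fact false. For instance, an anisotropic $13$-dimensional form can have $\witti{1}{\varphi}=1$ (this is a permitted value by Karpenko's theorem), giving $\mydim{\varphi}-\witti{1}{\varphi}=12 > 4 = 2^{N-2}$ with $N=4$, yet $2^{N-1}=8$ does not divide $12$. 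What Karpenko's theorem actually says is that $\witti{1}{\varphi}$ is bounded above by the lowest power of $2$ appearing in the binary expansion of $\mydim{\varphi}-\witti{1}{\varphi}$. To get $m=k$ from this one must also use that $\witti{1}{q_{l-1}} = \tfrac{1}{2}(2^N - m - k)$ together with the bound $k < 2^s \leq 2^{N-1}$: if, say, $m>k$ and $2^a$ is the lowest set bit of $(m-k)/2$ (hence of $\mydim{q_{l-1}}-\witti{1}{q_{l-1}}$), then Karpenko gives $\tfrac{1}{2}(2^N-m-k) \leq 2^a \leq \tfrac{1}{2}(m-k)$, whence $m\geq 2^{N-1}$, contradiction; the case $m<k$ similarly forces $k \geq 2^{N-1}$. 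So the constraint $k < 2^{N-1}$ is an essential ingredient, not a cosmetic one. (Relatedly, ``$2^{N-1}\mid \mydim{q_{l-1}}-\witti{1}{q_{l-1}}$'' is in fact equivalent to $2^N \mid m-k$, not $2^{N-1}\mid m-k$; this is a harmless arithmetic slip since both, together with $0\leq m,k<2^{N-1}$, yield $m=k$.) Your proof body reaches the right conclusion because you invoke $0\leq m,k<2^{N-1}$ at the end, but the commentary mischaracterizes the Karpenko input you actually need.
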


Our result is now that Conjecture \ref{conjrefined} holds in the case where $N = s+1$:

\begin{theorem} \label{thmSPcondition} Assume that $l \geq 1$ $($i.e., that $q_{F(p)}$ is isotropic$)$. If $\mathrm{dim}(q_{l-1}) = 2^{s+1} - k$, then Conjecture \ref{conjrefined} is true.
\begin{proof} The proof is similar to that of \cite[Thm. 4.1]{Scully1}, and begins with the following observation regarding the motivic decomposition of the quadric $Q$:

\begin{lemma} \label{lemmotivicsummand} In the situation of Theorem \ref{thmSPcondition}, $U(P)\lbrace \mathfrak{i}-1 \rbrace$ is isomorphic to a direct summand of $M(Q)$.
\begin{proof} By \cite[Thm. 4.15]{Vishik1}, it suffices to check that for every field extension $L$ of $F$, we have $\windex{p_L} > 0 \Leftrightarrow \windex{q_L} \geq \mathfrak{i}. $
The left to right implication is immediate; indeed, if $p_L$ is isotropic, then $L(p)$ is a purely transcendental extension of $L$, and so $\windex{q_L} = \windex{q_{L(p)}} \geq \windex{q_{F(p)}} = \mathfrak{i}$. For the other implication, assume that $\windex{q_L} \geq \mathfrak{i}$. If $(F_r)$ denotes the Knebusch splitting tower of $q$, then it follows from the very definition of $l$ that the free composite $F_l \cdot L$ is a purely transcendental extension of $L$ (compare the end of the proof of Proposition \ref{propdefined}). To show that $\windex{p_L} >0$, it therefore suffices to show that $p$ becomes isotropic over $F_l$. Note first that, by hypothesis, we have
\begin{eqnarray*} \mathrm{dim}(q_{l-1}) - \witti{1}{q_{l-1}} &=& \mathrm{dim}(q_{l-1}) - \frac{1}{2}\left(\mathrm{dim}(q_{l-1}) - \mathrm{dim}(q_l)\right) \\ &=&  (2^{s+1} - k) - \frac{1}{2}(2^{s+1} - 2k) = 2^s. \end{eqnarray*}
On the other hand, the form $q_{l-1}$ becomes isotropic over $F_{l-1}(p)$. Since $\mathrm{dim}(p) > 2^s$, and since $F_l =  F_{l-1}(q_{l-1})$, the desired assertion thus follows from Izhboldin's ``strong incompressibility'' theorem (\cite[Thm. 0.2]{Izhboldin1}).
 \end{proof} \end{lemma}

The proof of Theorem \ref{thmSPcondition} now proceeds by induction on $l$, with the case where $l = 1$ being trivial. Indeed, if $l=1$, then $\mathrm{dim}(q) = 2^{s+1} - k$ and $\witti{1}{q} = (\mathrm{dim}(q) -k)/2 = 2^s - k$. Assume now that $l\geq 2$ (in particular, $h(q)\geq 2$). Applying the induction hypothesis to the pair $(q_1,p_{F(q)})$, we immediately get that the integers $\mathrm{dim}(q_r)$ and $\witti{r+1}{q}$ have the prescribed form for all $1 \leq r < l$. In particular, we have $\mathrm{dim}(q_1) = 2^{s+1}a_1 + \epsilon_1$ for some $a_1 \geq 1$ and $-k \leq \epsilon_1 \leq k$. Since $\mathrm{dim}(q) = \mathrm{dim}(q_1) + 2\witti{1}{q}$, we then have
\begin{equation} \label{eq5.1} \mathfrak{i} = \frac{1}{2}(\mathrm{dim}(q) - k) = 2^sa_1 + \witti{1}{q} -\left(\frac{k - \epsilon_1}{2}\right). \end{equation}
It still remains show that $\mathrm{dim}(q)$ and $\witti{1}{q}$ are as claimed. We separate two cases:\\

\noindent {\it Case 1.} $\witti{1}{q} \leq (k - \epsilon_1)/2$. In this case, the assertions are clear. Indeed, we have 
\begin{eqnarray*} 2^{s+1}a_1 + \epsilon_1 = \mathrm{dim}(q_1) &<& \mathrm{dim}(q) \\
 &=& \mathrm{dim}(q_1) + 2\witti{1}{q} \\ 
 &\leq& \mathrm{dim}(q_1) + (k-\epsilon_1) = 2^{s+1}a_1 + k. \end{eqnarray*}
Since $-k \leq \epsilon_1 \leq k$, this shows that $\mathrm{dim}(q) = 2^{i+1}a_1 + \epsilon_0$ for some $-k < \epsilon_0 \leq k$, and we then have
$$ \witti{1}{q} = \frac{\mathrm{dim}(q) - \mathrm{dim}(q_1)}{2} = \frac{\epsilon_0 - \epsilon_1}{2} \leq \frac{k+\epsilon_0}{2}. $$

\noindent {\it Case 2.} $\witti{1}{q} > (k - \epsilon_1)/2$. In this case, let $u$ denote the smallest non-negative integer such that $2^u \geq \witti{1}{q}$. By Karpenko's theorem on the possible values of the first Witt index (\cite{Karpenko2}), the integer $\mathrm{dim}(q) - \witti{1}{q}$ is divisible by $2^u$. Since $\mathrm{dim}(q) = \mathrm{dim}(q_1) + 2\witti{1}{q}$, it follows that
\begin{equation} \label{eq5.2} 2^{s+1}a_1 + \epsilon_1 + \witti{1}{q} \equiv 0 \pmod{2^u}. \end{equation}
We claim that $u >i$. Suppose that this is not the case. Then \eqref{eq5.2} implies that $\witti{1}{q} = 2^u \mu - \epsilon_1$ for some integer $\mu$. Now, by hypothesis (and the fact that $\epsilon_1 \geq -k$), we have
$$ \witti{1}{q} + \epsilon_1 > \frac{k-\epsilon_1}{2} + \epsilon_1 = \frac{k + \epsilon_1}{2} \geq 0,$$  
and so $\mu > 0$. At the same time, we have $2^{u}\mu \leq 2^s$. Indeed, since $2^u \geq \witti{1}{q}$, the inequality $2^u\mu > 2^s$ would imply that 
$$ \epsilon_1 = 2^u\mu - \witti{1}{q} \geq 2^s, $$ 
which is not the case (since $\epsilon_1 \leq k < 2^s$). The situation is thus as follows: We have
\begin{equation} \label{eq5.3}  \mathrm{dim}(q) - \witti{1}{q} = \mathrm{dim}(q_1) + \witti{1}{q} = 2^{s+1}a_1 + 2^u \mu, \end{equation}
where $a_1 \geq 1$ and $0 < 2^u\mu \leq 2^s$. Now the integer $2^{s+1}a_1$ can be written in the form $2^{r_1} - 2^{r_2} + \cdots + 2^{r_{w-1}} - 2^{r_w}$ for some integers $r_1>r_2>\cdots>r_w \geq s+1$, while $2^u\mu$ can be written as $2^{r_{w+1}} - 2^{r_{w+2}} + \cdots + (-1)^{t-1}2^{r_t}$ for unique integers $s \geq r_{w+1} > r_{w+2} > \cdots > r_{t-1} > r_t+1 \geq 1$. Equation \ref{eq5.3} can therefore be re-written as
$$ \mathrm{dim}(q) - \witti{1}{q} = \underbrace{\left(2^{r_1} - 2^{r_2} + \cdots + 2^{r_{w-1}}- 2^{r_w}\right)}_{2^{s+1}a_1} + \underbrace{\left(2^{r_{w+1}} - 2^{r_{w+2}} + \cdots + (-1)^{t-1}2^{r_t}\right)}_{2^u\mu}. $$
Unless $2^u\mu = 2^s$ and $r_{w} = s+1$, this is precisely the presentation of $\mathrm{dim}(q) - \witti{1}{q}$ as an alternating sum of $2$-powers appearing in the statement of Vishik's Theorem \ref{thmexcellentconnections}. If $2^u\mu = 2^s$ and $r_{w} = s+1$, then the needed presentation is
$$ \mathrm{dim}(q) - \witti{1}{q} = \underbrace{\left(2^{r_1} - 2^{r_2} + \cdots + 2^{r_{w-1}}\right)}_{2^{s+1}(a_1 + 1)} - 2^{s}. $$
Either way, see that the Tate motive $\mathbb{Z} \lbrace 2^sa_1 \rbrace$ is isomorphic to a direct summand of $U(Q)_{\overline{F}}$. Indeed, this follows by applying Vishik's Theorem with $c=w+1$ in the first case, and with $c = w$ in the second. Let $j = \witti{1}{q} - 1 - (k - \epsilon_1)/2$. Since $\witti{1}{q} > (k - \epsilon_1)/2$, we then have $0 \leq j < \witti{1}{q}$. Thus, by \cite[Cor. 3.10]{Vishik4}, $U(Q)\lbrace j \rbrace$
is isomorphic to a direct summand of $M(Q)$. By the preceding discussion, $\mathbb{Z}\lbrace 2^sa_1 + j \rbrace$ is isomorphic to a direct summand of $U(Q)\lbrace j \rbrace_{\overline{F}}$. But 
$$ 2^sa_1 + j  = 2^sa_1 + \witti{1}{q} -(k- \epsilon_1)/2 -1 = \mathfrak{i} - 1$$ 
by equation \eqref{eq5.1}. In view of Lemma \ref{lemmotivicsummand}, we thus see that there are two indecomposable direct summands of $M(Q)$ containing the Tate motive $\mathbb{Z}\lbrace \mathfrak{i} - 1 \rbrace$ in their respective decompositions over $\overline{F}$, namely, $U(Q)\lbrace j \rbrace$ and $U(P) \lbrace \mathfrak{i} - 1 \rbrace$. As result, we must have $U(Q)\lbrace j \rbrace \cong U(P)\lbrace \mathfrak{i} - 1 \rbrace$ (see \S \ref{subsecmotives} above). In particular, we have $j = \mathfrak{i}-1$. 
Since $j< \mathfrak{i}_1(q)$, and since $\mathfrak{i} > 0$, this implies that $\mathfrak{i} = \witti{1}{q}$. In other words, it implies that $l=1$, which provides us with the needed contradiction to our supposition. We can therefore conclude that $u>s$, i.e., that $\witti{1}{q} > 2^s$. By \eqref{eq5.2}, it follows that $\witti{1}{q} = 2^{s+1}b_0 - \epsilon_1$ for some positive integer $b_0$. Moreover, we then have
$$\mathrm{dim}(q) = \mathrm{dim}(q_1) + 2\witti{1}{q} = 2^{s+1}(a_1 + 2b_0) - \epsilon_1. $$
Since $-k \leq \epsilon_1 \leq k$, this proves the theorem. \end{proof} \end{theorem}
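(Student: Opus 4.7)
The plan is to argue by induction on $l$, the base case $l=1$ being immediate: the hypothesis forces $\mydim{q} = 2^{s+1}-k$, whence $\witti{1}{q} = (\mydim{q}-k)/2 = 2^{s}-k$, which is precisely the exceptional alternative allowed by Conjecture \ref{conjrefined}(2). For the inductive step ($l\geq 2$), the aim is to apply the induction hypothesis to the pair $(q_1, p_{F(q)})$. First, $p_{F(q)}$ is anisotropic: otherwise the function field of an isotropic quadric is purely transcendental, giving $\windex{q_{F(q)(p)}} = \witti{1}{q}$, which would force $\witti{1}{q} \geq \mathfrak{i} := \windex{q_{F(p)}}$, hence $l=1$. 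The corresponding height parameter is then $l-1$ and $\mydim{(q_1)_{l-2}} = \mydim{q_{l-1}} = 2^{s+1}-k$, so induction settles the assertions for $1 \leq r < l$; in particular $\mydim{q_1} = 2^{s+1}a_1 + \epsilon_1$ with $a_1\geq 1$ and $|\epsilon_1|\leq k$. Since $\mydim{q} = \mydim{q_1} + 2\witti{1}{q}$, only the shape of $\witti{1}{q}$ remains to be determined.

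If $\witti{1}{q} \leq (k-\epsilon_1)/2$, the conclusion is immediate: setting $\epsilon_0 := \epsilon_1 + 2\witti{1}{q}$ one has $|\epsilon_0|\leq k$, $\mydim{q} = 2^{s+1}a_1 + \epsilon_0$, and $\witti{1}{q} = (\epsilon_0 - \epsilon_1)/2 \leq (k+\epsilon_0)/2$. The substance lies in the opposite case, $\witti{1}{q} > (k-\epsilon_1)/2$, where one must force $\witti{1}{q}$ into the form $2^{s+1}b_0 - \epsilon_1$ for some $b_0\geq 1$.

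My strategy in this case is motivic. The first step is to produce $U(P)\lbrace \mathfrak{i}-1\rbrace$ as a direct summand of $M(Q)$. By Vishik's motivic recognition principle \cite[Thm. 4.15]{Vishik1}, this reduces to the equivalence $\windex{p_L}>0 \Leftrightarrow \windex{q_L}\geq \mathfrak{i}$ for every extension $L/F$; the nontrivial implication reduces to showing that $p$ becomes isotropic over $F_l$, and the hypothesis $\mydim{q_{l-1}} = 2^{s+1}-k$ yields precisely $\mydim{q_{l-1}} - \witti{1}{q_{l-1}} = 2^{s} < \mydim{p}$, so Izhboldin's incompressibility theorem \cite[Thm. 0.2]{Izhboldin1} applies. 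The second step is to suppose that $\witti{1}{q}$ does not have the prescribed form and derive a contradiction. By Karpenko's first Witt index theorem \cite{Karpenko2}, $\mydim{q}-\witti{1}{q}$ is divisible by the least $2^u\geq \witti{1}{q}$; a short $2$-adic analysis shows that under our supposition $2^u \leq 2^{s}$. Feeding the resulting alternating-sum expansion of $\mydim{q}-\witti{1}{q} = 2^{s+1}a_1 + 2^u\mu$ into Vishik's excellent connections theorem (Theorem \ref{thmexcellentconnections}) produces a Tate summand $\mathbb{Z}\lbrace 2^{s}a_1\rbrace$ of $U(Q)_{\overline{F}}$; shifting by $j := \witti{1}{q} - 1 - (k-\epsilon_1)/2$ and invoking \cite[Cor. 3.10]{Vishik4} gives a direct summand $U(Q)\lbrace j\rbrace$ of $M(Q)$ whose base change contains $\mathbb{Z}\lbrace \mathfrak{i}-1\rbrace$. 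The uniqueness of indecomposable decompositions then forces $U(Q)\lbrace j\rbrace \cong U(P)\lbrace \mathfrak{i}-1\rbrace$, so $\mathfrak{i}=\witti{1}{q}$ and $l=1$, contradicting $l\geq 2$. Consequently $\witti{1}{q} > 2^{s}$, and Karpenko's congruence pins down $\witti{1}{q} = 2^{s+1}b_0 - \epsilon_1$, whence $\mydim{q} = 2^{s+1}(a_1+2b_0) - \epsilon_1$.

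I expect the main obstacle to be the $2$-adic bookkeeping in the motivic step: one must express $\mydim{q}-\witti{1}{q}$ as an alternating sum of strictly decreasing powers of $2$ in the exact form required by Theorem \ref{thmexcellentconnections}, which probably necessitates a subcase when the low-order chunk $2^u\mu$ equals $2^s$ and ``collides'' with the lowest exponent of $2^{s+1}a_1$. Only once this bookkeeping is performed carefully can the identification of Tate shifts $2^{s}a_1 + j = \mathfrak{i}-1$ be made, on which the crucial motivic coincidence $U(Q)\lbrace j\rbrace \cong U(P)\lbrace \mathfrak{i}-1\rbrace$ rests.
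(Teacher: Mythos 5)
Your proposal follows essentially the same path as the paper's proof: the same reduction to showing $U(P)\lbrace\mathfrak{i}-1\rbrace$ is a direct summand of $M(Q)$ via Vishik's recognition principle and Izhboldin's incompressibility theorem, the same induction on $l$ with the same case split, and in the non-trivial case the same interplay between Karpenko's first Witt index theorem, Vishik's excellent connections theorem, and the uniqueness of indecomposable motivic decompositions; you even correctly anticipate the one delicate subcase ($2^u\mu=2^s$ colliding with the lowest exponent $s+1$ in $2^{s+1}a_1$), which the paper handles by replacing $2^{s+1}a_1 + 2^s$ with $2^{s+1}(a_1+1)-2^s$. The only addition you make that is elided in the paper is the explicit check that $p_{F(q)}$ is anisotropic, which is a useful detail.
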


Let us note that the proof of Theorem \ref{thmSPcondition} allows us to extract more information. Recall that a direct summand $N$ in the motive of a smooth projective $F$-quadric is said to be \emph{binary} if $N_{\overline{F}}$ is isomorphic to a direct sum of two Tate motives. We then have:

\begin{proposition} \label{propbinarymotive} If $k<2^{s-1} + 2^{s-2}$ and $\mathrm{dim}(q_{l-1}) = 2^{s+1} - k$, then the upper motive $U(P)$ of the quadric $P$ is binary.
\begin{proof} By Lemma \ref{lemmotivicsummand}, $U(P)\lbrace \mathfrak{i} - 1 \rbrace$ is isomorphic to a direct summand of $M(Q)$. Since $\mathfrak{i}-1 = \wittj{l}{q}-1$, it follows from \cite[Thm. 4.13]{Vishik4} that $N \coloneqq U(P) \lbrace \mathfrak{j}_{l-1}(q)\rbrace$ is also isomorphic to a direct summand of $M(Q)$. By an argument identical to that given in \cite[Proof of Thm. 7.7]{Vishik4}, it follows that $N$, and hence $U(P)$, is binary provided that $\witti{r}{q} < \witti{l}{q}$ for all $r>l$. But $\witti{l}{q} = (\mathrm{dim}(q_{l-1}) - k)/2 = 2^s- k> 2^{s-2}$ by hypothesis, and so it only remains to note that $\witti{r}{q} \leq 2^{s-2}$ for all $r >l$. Indeed, if $\mathrm{dim}(q_{r-1}) \leq 2^{s-1}$, then this is trivial; otherwise, our hypothesis again implies that $2^{s-1} < \mathrm{dim}(q_{r-1}) < 2^{s-1} + 2^{s-2}$ and so the claim follows from Hoffmann's Separation theorem (see \cite[Cor. 1]{Hoffmann1}). \end{proof} \end{proposition}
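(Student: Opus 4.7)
The plan is to leverage Lemma \ref{lemmotivicsummand}, which has already established, under precisely the hypotheses at hand, that $U(P)\lbrace \mathfrak{i}-1\rbrace$ is isomorphic to a direct summand of $M(Q)$. So we already have one Tate-twisted copy of $U(P)$ sitting inside the motive of $Q$. The idea is to produce a \emph{second} appearance of $U(P)$ as a summand of $M(Q)$ (at a different shift), and then use Vishik's machinery from \cite{Vishik4} to conclude that such an ``abundance'' is only possible when $U(P)$ itself is binary.

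Concretely, I would first rewrite $\mathfrak{i}-1 = \wittj{l}{q}-1$, so that Lemma \ref{lemmotivicsummand} places $U(P)\lbrace \wittj{l}{q}-1\rbrace$ as a summand of $M(Q)$. This is exactly the setup for the ``symmetric summand'' result of Vishik (\cite[Thm.~4.13]{Vishik4}), which I would apply to transport this summand to the mirror position, obtaining that $N \coloneqq U(P)\lbrace \wittj{l-1}{q}\rbrace$ is also a direct summand of $M(Q)$. I would then invoke the argument appearing in the proof of \cite[Thm.~7.7]{Vishik4}: it shows that such a summand $N$ must be binary (and hence, after stripping the Tate twist, so must $U(P)$) \emph{provided} that $\witti{r}{q} < \witti{l}{q}$ for every $r>l$.

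The remainder is a direct computation of the splitting pattern of $q$. From the hypothesis $\mydim{q_{l-1}} = 2^{s+1}-k$ we get
\[ \witti{l}{q} = \frac{\mydim{q_{l-1}} - k}{2} = 2^s - k, \]
which is strictly greater than $2^{s-2}$ by the assumption $k < 2^{s-1}+2^{s-2}$. For the other direction, fix $r>l$. Since $\mydim{q_{r-1}} \leq \mydim{q_l} = k < 2^{s-1}+2^{s-2}$, one splits into two cases: if $\mydim{q_{r-1}} \leq 2^{s-1}$ then trivially $\witti{r}{q} \leq 2^{s-2}$; if instead $2^{s-1} < \mydim{q_{r-1}} < 2^{s-1}+2^{s-2}$, Hoffmann's Separation theorem (\cite[Cor.~1]{Hoffmann1}) forces $\witti{r}{q} \leq \mydim{q_{r-1}} - 2^{s-1} < 2^{s-2}$. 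Either way $\witti{r}{q} < \witti{l}{q}$, as required.

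The main obstacle, I expect, will be verifying that Vishik's symmetric-summand result and the binariness criterion extracted from the proof of \cite[Thm.~7.7]{Vishik4} can be applied off-the-shelf in the present setting, with the correct shifts and without extra hypotheses on $Q$. The dimension/Witt-index bookkeeping, though essential, is routine once the motivic ingredients are in place.
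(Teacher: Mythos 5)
Your proposal follows the paper's proof essentially line for line: the same use of Lemma \ref{lemmotivicsummand}, the same application of \cite[Thm.~4.13]{Vishik4} to obtain the mirror summand $N = U(P)\lbrace \wittj{l-1}{q}\rbrace$, the same appeal to the binariness criterion implicit in the proof of \cite[Thm.~7.7]{Vishik4}, and the same two-case splitting-pattern estimate to verify $\witti{r}{q} < \witti{l}{q}$ for $r > l$. The argument is correct and there is no meaningful divergence from the paper.
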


As conjectured by Vishik (cf. \cite[Conj. 4.21]{Vishik4}), the upper motive of an anisotropic projective quadric over a field of characteristic $\neq 2$ should be binary only when its underlying quadratic form is a Pfister neighbour (the converse being a well-known result of Rost (\cite[Prop. 4]{Rost})). Significant evidence for the validity of this assertion has been given in \cite[\S 6]{IzhboldinVishik}. Moreover, in \cite{Karpenko1}, Karpenko showed that Vishik's conjecture holds for forms of dimension $\leq 16$. Applying this to the special case where $k = 0$ and $\mathrm{deg}(q) = s+1$, we get the following result promised in \S \ref{subsecmainhyp} above:

\begin{corollary} \label{corbinarymotive} If $q_{F(p)}$ is hyperbolic and $\mathrm{deg}(q) = s+1$, then: 
\begin{enumerate} \item The upper motive $U(P)$ of the quadric $P$ is binary. 
\item If $s \leq 3$ $($i.e., $\mathrm{dim}(p) \leq 16)$, there exists an $(s+1)$-fold Pfister form $\pi$ and an odd-dimensional form $r$ such that $q \simeq \pi \otimes r$. Moreover, $p$ is a neighbour of $\pi$. \end{enumerate}
\begin{proof} The first statement is the $k=0$ case of Proposition \ref{propbinarymotive}. The second statement then follows from the aforementioned result of Karpenko together with \cite[Cor. 23.6]{EKM}. More precisely, Karpenko showed in \cite{Karpenko1} that if the upper motive of $P$ is binary, and $\mathrm{dim}(p) \in \lbrace 3,5,9 \rbrace$, then $p$ is a Pfister neighbour. But this implies the same assertion in all dimensions $\leq 16$ by \cite[Thm. 6.1]{IzhboldinVishik} and \cite[Cor. 3.9, Cor. 4.7]{Vishik1}. \end{proof} \end{corollary}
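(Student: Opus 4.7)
The plan is to derive part~(1) directly from Proposition~\ref{propbinarymotive} by checking that its hypotheses are met in the present setting, and to obtain part~(2) by combining (1) with Karpenko's classification of binary upper motives in low dimensions and with the Arason--Pfister divisibility theorem.

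For (1), the hyperbolicity of $q_{F(p)}$ forces $k = 0$, so $l = h(q)$ and $q_{l-1}$ is the penultimate form in the Knebusch splitting tower. The assumption $\mathrm{deg}(q) = s+1$ combined with the characterization recalled in \S\ref{subsecknebusch} (the penultimate anisotropic kernel of a non-split even-dimensional form of degree $n$ is similar to an $n$-fold Pfister form) gives $\mydim{q_{l-1}} = 2^{s+1} = 2^{s+1} - k$. The inequality $k < 2^{s-1} + 2^{s-2}$ is trivially satisfied since $k=0$. Thus Proposition~\ref{propbinarymotive} applies and yields that $U(P)$ is binary.

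For (2), assume $s \leq 3$, so $\mydim{p} \leq 16$. Karpenko's theorem from \cite{Karpenko1} asserts that if $\mydim{p}$ lies in the critical set $\{3, 5, 9\}$ (the values $2^s + 1$ for $s \leq 3$) and $U(P)$ is binary, then $p$ is a Pfister neighbour. To upgrade this to arbitrary $\mydim{p} \leq 16$, I would apply \cite[Thm.~6.1]{IzhboldinVishik} together with \cite[Cor.~3.9, Cor.~4.7]{Vishik1}, which permit one to pass from the binary upper motive of $P$ to the binary upper motive of an appropriate anisotropic subquadric of critical dimension, and then invoke Karpenko's theorem there. This produces an $(s+1)$-fold Pfister form $\pi$ of which $p$ is a neighbour. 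Since $p$ and $\pi$ are then stably birationally equivalent (see \S\ref{subsecstablebirationality}), the Witt kernels $W(F(p)/F)$ and $W(F(\pi)/F)$ coincide, so $q_{F(\pi)}$ is hyperbolic. The Arason--Pfister divisibility result \cite[Cor.~23.6]{EKM} then produces a form $r$ over $F$ with $q \simeq \pi \otimes r$ (anisotropy of $q$ promotes the Witt equivalence to an isometry). Oddness of $\mydim{r}$ follows from $\mathrm{deg}(q) = s+1$: if $\mydim{r}$ were even then $[q] = [\pi][r]$ would lie in $I^{s+2}(F)$, contradicting the identification of $\mathrm{deg}$ with the largest power of $I$ containing $[q]$ provided by \cite[Thm.~4.3]{OVV}.

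The principal obstacle in this plan is the step extending Karpenko's binary-upper-motive criterion from $\{3,5,9\}$ to all of $\{2,\ldots,16\}$: carrying it out rigorously requires tracking how the binary summand of $M(P)$ descends through the motivic decompositions of Pfister neighbours and identifying the precise subquadric on which Karpenko's result is applicable. All remaining steps are either formal consequences of results assembled earlier in the paper or standard applications of the Cassels--Pfister and Arason--Pfister theorems.
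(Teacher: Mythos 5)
Your proposal is correct and follows the same route as the paper. For part (1) you correctly unpack why the hypotheses of Proposition~\ref{propbinarymotive} are satisfied when $k=0$: the hyperbolicity of $q_{F(p)}$ gives $l = h(q)$, and $\mathrm{deg}(q)=s+1$ forces $\mydim{q_{l-1}} = 2^{s+1} = 2^{s+1}-k$, exactly as needed; the paper states this as ``the $k=0$ case'' without spelling it out. For part (2) your chain of reasoning -- Karpenko's criterion on the critical dimensions $\{3,5,9\}$, the extension to all dimensions $\leq 16$ via \cite[Thm.~6.1]{IzhboldinVishik} and \cite[Cor.~3.9, Cor.~4.7]{Vishik1}, then passing from $F(p)$ to $F(\pi)$ by stable birational equivalence, applying \cite[Cor.~23.6]{EKM} to get $q \simeq \pi \otimes r$, and finally deducing oddness of $\mydim{r}$ from $\mathrm{deg}(q)=s+1$ via \cite[Thm.~4.3]{OVV} -- is precisely the paper's argument, with the divisibility/oddness steps made explicit where the paper leaves them implicit. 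Your acknowledged reservation about the $\{3,5,9\}$-to-$\{2,\ldots,16\}$ upgrade mirrors the paper's own brevity here; the cited results of Izhboldin--Vishik and Vishik do carry the reduction, and no additional argument is needed beyond what is in those references.
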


We can now give the proof of Theorem \ref{thmadditional}:

\begin{proof}[Proof of Theorem \ref{thmadditional}] We may assume that $l \geq 1$, since otherwise there is nothing to prove. If $\mathrm{dim}(q_{l-1}) = 2^{s+1} - k$, then Conjecture \ref{conjrefined} holds by Theorem \ref{thmSPcondition}. Thus, by Lemma \ref{lemdimatprevious}, we can assume that $\mathrm{dim}(q_{l-1}) \geq 2^{s+2} - k$. Now, if $\mathrm{dim}(q) \leq 2^{s+2} + k$, then we obviously have $\mathrm{dim}(q_{l-1}) = 2^{s+2} -k$, and the assertions of Conjecture \ref{conjrefined} hold trivially. This takes care of case (2) of the theorem. It now remains to treat the case where $k \leq 7$. As $\mathrm{dim}(q_{l-1}) \geq 2^{s+2} -k$, Proposition \ref{propdefined} and Lemma \ref{lemdefinedimpliesconj} show that it suffices to know that Kahn's Conjecture \ref{conjKahn} holds in the following cases:
\begin{enumerate} \item[(i)] $\mathrm{dim}(\tau) \in \lbrace 0,1 \rbrace$, $\mathrm{dim}(\varphi) \geq 7$.
\item[(ii)] $\mathrm{dim}(\tau) \in \lbrace 2,3 \rbrace$, $\mathrm{dim}(\varphi) \geq 13$.
\item[(iii)] $\mathrm{dim}(\tau) \in \lbrace 4,5,6,7 \rbrace$, $\mathrm{dim}(\varphi) \geq 25$. \end{enumerate}
But all these cases of Kahn's conjecture are already known to be true: for the cases where $\mathrm{dim}(\tau) \leq 5$, see \cite[Thm. 2]{Kahn1}; for the case where $\mathrm{dim}(\tau) = 6$, see \cite[Thm. principale]{Laghribi1}, and for the case where $\mathrm{dim}(\tau) = 7$, see \cite[Thm. 3.9]{IzhboldinVishik}. \end{proof}

\begin{corollary} Conjecture \ref{conjmain} holds if $\mydim{q} \leq 2^{s+1} + 2^{s-1}$. 
\begin{proof} If $k \geq 2^{s-1}$, apply \ref{thmadditional}; otherwise, apply Theorem \ref{thmmain}. \end{proof} \end{corollary}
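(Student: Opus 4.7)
The plan is a straightforward case analysis based on the size of $k$ relative to $2^{s-1}$, exploiting that Theorem \ref{thmmain} and case (2) of Theorem \ref{thmadditional} together cover every admissible $k$ under the dimension hypothesis $\mydim{q} \leq 2^{s+1} + 2^{s-1}$. We may assume $s \geq 2$, since if $s\leq 1$ the conjecture is already vacuously true for all admissible $k$ (indeed, $k \geq 2^s-1$ is automatic unless $s \geq 2$).

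In the first case, $k < 2^{s-1}$, Theorem \ref{thmmain} applies directly and gives the desired conclusion for the pair $(p,q)$ with no further work needed. In the complementary case, $k \geq 2^{s-1}$, I would invoke case (2) of Theorem \ref{thmadditional}, whose hypothesis reads $\mydim{q} \leq 2^{s+2} + k$. This hypothesis is implied by the one at hand: combining $\mydim{q} \leq 2^{s+1} + 2^{s-1}$ with $k \geq 2^{s-1}$ yields
$$ \mydim{q} \leq 2^{s+1} + 2^{s-1} < 2^{s+2} + 2^{s-1} \leq 2^{s+2} + k, $$
so Theorem \ref{thmadditional} yields Conjecture \ref{conjrefined}, and in particular Conjecture \ref{conjmain}, for $(p,q)$.

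There is no genuine obstacle here: the substance of the argument is already contained in Theorem \ref{thmmain} (via the Steenrod-operation reformulation in \S\ref{secproof}) and Theorem \ref{thmadditional} (via the motivic arguments of \S\ref{subsecadditional}, particularly Theorem \ref{thmSPcondition} and Lemma \ref{lemdimatprevious}). The only content of the corollary is the observation that the threshold $2^{s+1} + 2^{s-1}$ is precisely calibrated so that the interval $k \in [2^{s-1}, 2^s-2]$ left open by Theorem \ref{thmmain} is always swallowed by the $\mydim{q} \leq 2^{s+2}+k$ hypothesis of Theorem \ref{thmadditional}(2).
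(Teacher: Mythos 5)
Your proof takes the same two-case route as the paper, which simply says ``if $k \geq 2^{s-1}$, apply Theorem \ref{thmadditional}; otherwise, apply Theorem \ref{thmmain},'' and your verification of the hypothesis $\mydim{q} \leq 2^{s+2}+k$ of Theorem \ref{thmadditional}(2) is correct. Two remarks, however. First, the preliminary reduction to $s\geq 2$ rests on a false parenthetical: the claim that $k\geq 2^s-1$ is automatic for $s\leq 1$ fails when $s=1$, since $k=0$ (hyperbolicity of $q_{F(p)}$) is perfectly possible and the conjecture is then not vacuous. The overall argument survives because $s=1$, $k=0<2^{s-1}=1$ falls into the Theorem \ref{thmmain} case anyway, but the reduction step as stated is wrong and should be dropped. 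Second, it is worth noticing that under the bound as printed, $\mydim{q}\leq 2^{s+1}+2^{s-1}<2^{s+2}\leq 2^{s+2}+k$ holds for \emph{every} $k\geq 0$, so Theorem \ref{thmadditional}(2) alone already covers all cases and the case split is redundant; the split only becomes genuinely necessary for the bound $\mydim{q}\leq 2^{s+2}+2^{s-1}$, which is what item (iv) of the Introduction advertises. This strongly suggests the Corollary's displayed bound contains a typo (it should read $2^{s+2}+2^{s-1}$), and the paper's two-case proof is written with the corrected bound in mind.
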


\begin{remarks} \label{remsfinal} (1) Laghribi (\cite[Th\'{e}or\`{e}me principale]{Laghribi1}) has shown that if $\mathrm{dim}(\varphi) > 16$, then Conjecture \ref{conjKahn} also holds for some special classes of $8$, $9$ and $10$-dimensional forms $\tau$. This permits to extend Theorem \ref{thmadditional} to the case where $k \in \lbrace 8,9,10 \rbrace$ under some additional assumptions on $q$. We refrain from going into the details here.

(2) Conjecturally, the form $\anispart{(q_{F(p)})}$ should be defined over $F$ in the situation of Conjecture \ref{conjrefined} provided that \emph{either} (i) $k < 2^{s-1} + 2^{s-2}$ or (ii) $l \geq 1$ and $\mathrm{dim}(q_{l-1}) \neq 2^{s+1} - k$. Indeed, that the second condition should be enough is implicit in our proof of Theorem \ref{thmadditional}. As for the first condition, we may additionally assume in this case that $l \geq 1$ and $\mathrm{dim}(q_{l-1}) = 2^{s+1} - k$. By Proposition \ref{propbinarymotive}, this ensures that the upper motive of $P$ is binary. According to \cite[Conj. 4.21]{Vishik1}, this should in turn imply that $p$ is a Pfister neighbour. By replacing $p$ by its ambient Pfister form, we could then assume that $\mathrm{dim}(p) = 2^{s+1} > 2k$, and so the claim would follow directly from Conjecture \ref{conjKahn}. \end{remarks}

\noindent {\bf Acknowledgements.} I would like to thank Alexander Vishik for helpful comments. The support of a PIMS postdoctoral fellowship and NSERC discovery grant during the preparation of this article is gratefully acknowledged.

\bibliographystyle{alphaurl}

\end{document}